\documentclass[reqno]{amsart} 
\usepackage[utf8]{inputenc} 

\usepackage{calrsfs}

\usepackage{amssymb,amsthm,amsfonts} 
\usepackage{mathtools} 
\usepackage{fixltx2e} 
\usepackage{accents} 
\usepackage{tikz} 
\usepackage{graphicx} 
\usepackage{psfrag} 
\usepackage{cancel} 
\usepackage{hyperref} 
\usepackage{comment} 
\usepackage{ulem}

\newtheorem{theorem}{Theorem}[section] 
\newtheorem{lemma}{Lemma}[section] 
\newtheorem{proposition}{Proposition}[section] 
\newtheorem{corollary}{Corollary}[section] 
\newtheorem{remark}{Remark}[section]

\newtheorem{claim}{Claim}
\numberwithin{equation}{section}


\newlength{\dhatheight}

\newcommand{\caret}{\mathbin{\scriptscriptstyle\wedge}}

\newcommand{\pq}[2]{\ensuremath{#1,#2}}

\makeatletter
\def\revddots{\mathinner{\mkern1mu\raise\p@
\vbox{\kern7\p@\hbox{.}}\mkern2mu
\raise4\p@\hbox{.}\mkern2mu\raise7\p@\hbox{.}\mkern1mu}}
\makeatother

\newcommand{\kay}{\ensuremath{k}}

\newcommand{\F}{\ensuremath{\mathsf{F}}}

\newcommand{\A}{\ensuremath{\mathsf{A}}}

\renewcommand{\mod}{\ensuremath{\, \, \mathrm{mod} \,}}

\newcommand{\card}{\ensuremath{\#}}
\newcommand{\id}{\ensuremath{\mathrm{id}}}

\DeclareMathOperator{\discr}{discr}
\DeclareMathOperator{\trace}{tr}
\DeclareMathOperator{\spec}{spec}
\DeclareMathOperator{\specrad}{rad}

\addtolength{\topmargin}{1.0cm}

\title[]{Quadratic Irrationals, Closed Geodesics on the Modular Surface and Dynamical Zeta Functions}

\subjclass[2010]{Primary: 
11K50 
11Fxx 
37C30 
37B40 
; Secondary:  
05A15 
.}
\keywords{Continued fractions, Modular group, Dynamical zeta functions.}

\thanks{
This work has been partially supported by CAPES Special Visiting Researcher grant CSF-PVE-S - 88887.117899/2016-00.
}

\author{Peter Hazard} 
\address{Peter Hazard, Instituto de Matem\'{a}tica e Estat{\'i}stica, USP, S\~{a}o Paulo, SP, Brazil}
\email[]{pete@ime.usp.br}

\date{\today}

\begin{document}

\begin{comment}


\noindent
{\it Comments.}
The following are some changes that have been made or need making.

\vspace{5pt}

{\it [2017-06-25]}
\begin{enumerate}
\item[(0)]
Kept several sections from part I to maintain consistent notation etc.
\end{enumerate}

\newpage

\pagestyle{plain}
\setcounter{page}{1}
\end{comment}

\begin{abstract}
We show that generating functions associated to the sequence of convergents of a quadratic irrational
are related in a natural way to the dynamical zeta function of a hyperbolic automorphism of the $2$-torus.
As a corollary, this shows that the L\'{e}vy constant of a quadratic irrational appears naturally as the topological entropy of such maps. 
\end{abstract}

\maketitle

\section{Introduction}
\subsection{Background.}
Given an irrational real number $\theta$, 
for each non-negative integer $n$ let the rational real number $p_n/q_n$
denote the $n$th convergent of $\theta$ 
({\it i.e.}, the $n$th best rational approximant). 
The following limit, when it exists, is called 
the {\it L{\'e}vy constant} of $\theta$:
\begin{equation}\label{def:Levy_const}
\beta(\theta)=\lim_{n\to\infty}\frac{1}{n}\log q_n
\end{equation}
This is named after Paul L{\'e}vy, 
who proved that~\cite{Levy1929} 
\begin{equation}\label{thm:PaulLevy}
\beta(\theta)=\frac{\pi^2}{12\log 2}
\qquad
\mbox{for Lebesgue almost every} \quad \theta\in[0,1]
\end{equation}
(For more information see~\cite{LevyBook,KhinchinBook,Lehmer1939} and the references therein.)
Note that this can be shown by using the ergodicity
of the Gauss transformation with respect to the 
invariant Gauss measure together with the Birkhoff Ergodic Theorem.
 
For quadratic irrationals, it was shown by Jager and Liardet~\cite{JagerLiardet1988} that the above limit~\eqref{def:Levy_const} exists. 
However, in this case, the L\'{e}vy constant is not generally given by~\eqref{thm:PaulLevy}.
In fact, for a quadratic irrational $\theta$, the L\'evy constant is given by 
\begin{equation}\label{eq:Levy-radius_of_conv}
\beta(\theta)=\frac{1}{\ell}\log \mathrm{rad}(N_\theta)
\end{equation}
where 
$\ell$ denotes the length of the (eventual) period of the continued fraction expansion of $\theta$,
$N_\theta$ denotes an element of $\mathrm{PSL}(2,\mathbb{C})$ (in fact of $\mathrm{PSL}(2,\mathbb{Z})$) 
associated to the continued fraction expansion of $\theta$, 
and $\mathrm{rad}(A)$ denotes the spectral radius of either of the linear transformations corresponding to $A\in\mathrm{PSL}(2,\mathbb{C})$.
(For the readers' convenience, we recall the ergodic-theoretic proof of the L\'evy theorem and the Jager-Liardet theorem in Appendix~\ref{sect:Gauss_Map}.)

In~\cite{BelovaHazard2017a}, 
a new proof of the result of Jager and Liardet was given as a corollary of the following.
\begin{theorem}\label{thm:gen_fn_rat}
Let $\theta\in[0,1]\setminus \mathbb{Q}$ 
be a quadratic irrational with continued fraction expansion
\begin{equation}\label{eq:theta_ctd_frac_expansion}
\theta=\left[a_1,a_2,\ldots,a_\kay,\overline{a_{\kay+1},a_{\kay+2},\ldots,a_{\kay+\ell}}\right]
\end{equation}
Let 
\begin{align}
N_0=
\left[\begin{array}{cc}0&1\\1&a_1\end{array}\right]
\cdots
\left[\begin{array}{cc}0&1\\1&a_\kay\end{array}\right] 
\qquad
N_1=
\left[\begin{array}{cc}0&1\\1&a_{\kay+1}\end{array}\right]
\cdots
\left[\begin{array}{cc}0&1\\1&a_{\kay+\ell}\end{array}\right]
\end{align}
For each non-negative integer $n$, 
let $p_n/q_n$ denote the $n$th convergent of $\theta$.
Then the associated generating functions
\begin{equation}
F_p(z)=\sum_{n\geq 0} p_n z^n \qquad
F_q(z)=\sum_{n\geq 0} q_n z^n
\end{equation}
are both rational functions in the variable $z$, with integer coefficients.
In fact, 
\begin{equation}\label{eq:gen_fn_rat}
\left[\begin{array}{c}F_p(z)\\ F_q(z)\end{array}\right]
=
\sum_{0\leq n<\kay}z^{n}\left[\begin{array}{c}p_n\\ q_n\end{array}\right]
+\Bigl(\mathrm{id}-z^{\ell}N_0N_1N_0^{-1}\Bigr)^{-1} 
\sum_{\kay\leq n<\kay+\ell}z^n
\left[\begin{array}{c}p_n\\ q_n\end{array}\right]
\end{equation}
\end{theorem}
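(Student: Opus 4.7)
The plan is to realize $(p_n,q_n)^\top$ as the image of a fixed vector under a product of $2\times 2$ integer matrices, exploit the eventual periodicity of the continued fraction expansion of $\theta$ to rewrite the tail $n\geq\kay$ as a matrix-geometric progression in the single matrix $A:=N_0N_1N_0^{-1}$, and then sum that progression in the formal power series ring $\mathrm{Mat}_{2\times 2}(\mathbb{Z}[[z]])$.

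First I would record the classical identity $M_1M_2\cdots M_n=\begin{pmatrix}p_{n-1}&p_n\\q_{n-1}&q_n\end{pmatrix}$, where $M_i:=\begin{pmatrix}0&1\\1&a_i\end{pmatrix}$, proved by a one-line induction using the recurrences $p_n=a_np_{n-1}+p_{n-2}$ and $q_n=a_nq_{n-1}+q_{n-2}$. With $\mathbf{e}:=(0,1)^\top$, this gives $(p_n,q_n)^\top=M_1\cdots M_n\,\mathbf{e}$, and in particular $N_0\,\mathbf{e}=(p_\kay,q_\kay)^\top$. Since the partial quotients satisfy $a_{n+\ell}=a_n$ for $n\geq\kay+1$, writing any tail index as $n=\kay+m\ell+r$ with $m\geq 0$ and $0\leq r<\ell$ lets me group factors as
\begin{equation*}
(p_{\kay+m\ell+r},q_{\kay+m\ell+r})^\top=N_0\,N_1^m\,(M_{\kay+1}\cdots M_{\kay+r})\,\mathbf{e}.
\end{equation*}

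The crux of the argument is then a conjugation trick: inserting $N_0^{-1}N_0$ immediately after $N_1^m$ and using $N_0(M_{\kay+1}\cdots M_{\kay+r})\mathbf{e}=M_1\cdots M_{\kay+r}\mathbf{e}=(p_{\kay+r},q_{\kay+r})^\top$, the expression collapses to
\begin{equation*}
(p_{\kay+m\ell+r},q_{\kay+m\ell+r})^\top=A^m\,(p_{\kay+r},q_{\kay+r})^\top.
\end{equation*}
This replaces the two-matrix expression $N_0N_1^m$ acting on a varying trailing vector with a clean power of the single matrix $A$ acting on one of the $\ell$ fixed vectors $(p_{\kay+r},q_{\kay+r})^\top$, $0\leq r<\ell$. (Inverting $N_0$ is legitimate over $\mathbb{Z}$ since $\det M_i=-1$ forces $N_0\in\mathrm{GL}_2(\mathbb{Z})$.) I expect this conjugation to be the only genuinely nontrivial ingredient; without it one would only obtain a sum of the form $\sum_{m\geq 0}N_0N_1^m(\cdots)$, which is not directly a matrix geometric series.

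It then remains to split $F(z):=(F_p(z),F_q(z))^\top$ at $n=\kay$, reindex the tail by the pair $(m,r)$, and collect the resulting double sum:
\begin{equation*}
F(z)=\sum_{0\leq n<\kay}z^n(p_n,q_n)^\top+\Bigl(\sum_{m\geq 0}(z^\ell A)^m\Bigr)\sum_{\kay\leq n<\kay+\ell}z^n(p_n,q_n)^\top.
\end{equation*}
Because $z^\ell A$ has vanishing constant term, the bracketed sum equals $(\mathrm{id}-z^\ell A)^{-1}$ as a formal identity in $\mathrm{Mat}_{2\times 2}(\mathbb{Z}[[z]])$, which yields~\eqref{eq:gen_fn_rat}. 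Rationality with integer coefficients then drops out of Cramer's rule: $\det(\mathrm{id}-z^\ell A)=1-\trace(N_1)\,z^\ell+(-1)^\ell z^{2\ell}\in\mathbb{Z}[z]$ has constant term $1$ and the adjugate has entries in $\mathbb{Z}[z]$, so each component of $F(z)$ is a quotient of polynomials in $\mathbb{Z}[z]$ whose denominator is a unit in $\mathbb{Z}[[z]]$.
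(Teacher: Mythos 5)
Your proof is correct, and it reaches the same final identity by a noticeably different route from the paper. The paper (in the proof of its Theorem~\ref{thm:gen_fn_rat->higher-powers}, which specialises to this statement at $r=1$) works through the Gauss transformation: it first establishes the shift relation $(p_n,q_n)^\top=E(a_1)(\hat p_{n-1},\hat q_{n-1})^\top$ relating the convergents of $\theta$ to those of $\mathrm{T}\theta$ (Theorem~\ref{thm:p/q-action_under_Gauss_map}, which requires a separate argument ruling out common factors), iterates it to get the functional equation $F(z)=P_0(z)+z^{\kay}N_0F^{\caret\kay}(z)$, and then uses $F^{\caret\kay}=F^{\caret\kay+\ell}$ to obtain a linear equation $z^{\kay}N_0(\mathrm{id}-z^{\ell}N_1)F^{\caret\kay}(z)=P_1(z)$ which it solves and substitutes back; the conjugation by $N_0$ appears only at the last step, when $N_0$ is pulled through the resolvent. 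You instead never mention the convergents of $\mathrm{T}^m\theta$: you expand each $(p_n,q_n)^\top$ directly as $M_1\cdots M_n\mathbf{e}$, reindex the tail modulo $\ell$, perform the conjugation $N_0N_1^mN_0^{-1}=A^m$ in the middle of the product, and sum the resulting matrix geometric series formally. The shared core is the periodicity of the matrix factors and the resolvent $(\mathrm{id}-z^{\ell}N_0N_1N_0^{-1})^{-1}$, so the proofs are close in substance; what your version buys is self-containedness (it bypasses the Gauss-map formalism and in particular the common-factor lemma entirely, needing only the classical identity $M_1\cdots M_n=\left[\begin{smallmatrix}p_{n-1}&p_n\\ q_{n-1}&q_n\end{smallmatrix}\right]$), while the paper's functional-equation formulation is the one that transfers verbatim to the higher-power generating functions $F_{r-s,s}$ of Theorem~\ref{thm:gen_fn_rat->higher-powers}, where your direct expansion would additionally require the product formula for the monomial vectors (the paper's Corollary with the matrices $E(a;r)$). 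Your closing observation that $\det(\mathrm{id}-z^{\ell}A)=1-\trace(N_1)z^{\ell}+(-1)^{\ell}z^{2\ell}$ has constant term $1$, so the formal inverse is a genuine rational function over $\mathbb{Z}$, matches the computation the paper carries out in its (commented) alternative derivation.
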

(Note that restricting to the case of $\theta\in [0,1]\setminus\mathbb{Q}$ is merely a simplification.
This result, as well as those given in this article, hold more generally for quadratic irrationals in $\mathbb{R}\setminus\mathbb{Q}$.)
In this paper we prove a generalisation of the above result to more general generating functions 
whose coefficients are monomials in the $p_n$ and $q_n$ (Theorem~\ref{thm:gen_fn_rat->higher-powers}). 
We also show that to each quadratic irrational $\theta$, 
there exists a hyperbolic toral automorphism $f_\theta$ of $\mathbb{T}^2$
such that the generating functions defined above are related in a 
natural way to the dynamical zeta function of $f_\theta$.
Below we prove the following result.
\begin{theorem}\label{thm:main_thm}
Let $\theta\in [0,1]\setminus\mathbb{Q}$ be a quadratic irrational.
Then there exists 
\begin{itemize}
\item
a hyperbolic toral automorphism $f$ of $\mathbb{T}^2$, depending on $\theta$ only, 
\item
a rational function $R$ on $\hat{\mathbb{C}}$, independent of $\theta$,
\item
non-degenerate matrices $U(z)$, $V(z)$, $X(z)$ and $Y(z)$ with entries in $\mathbb{Z}(z)$ 
(the ring of rational functions in the variable $z$ with coefficients in $\mathbb{Z}$), 
\end{itemize}
such that the following equality holds
\begin{align}\label{eq:dlogzeta_vs_F-G}
z^\ell\left(\log\zeta_f\right)'(z^\ell)
+R(z^\ell)
=
\trace \left[U(z)^{-1}(X(z)-Y(z)) V(z)^{-1}\right]
\end{align}
where $\zeta_f$ denotes the dynamical zeta function of $f$.
\end{theorem}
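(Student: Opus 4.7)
The plan is to identify $f$ with the Anosov automorphism of $\mathbb{T}^{2}$ induced by the matrix $A := N_{0} N_{1} N_{0}^{-1}$, to rewrite $z^{\ell}(\log\zeta_{f})'(z^{\ell})$ via the resolvent $(\id - z^{\ell} A)^{-1}$, and then to re-express this resolvent using a matrix-valued strengthening of Theorem~\ref{thm:gen_fn_rat}. Since $\theta$ is a quadratic irrational with eventual period $\ell$, the trace of $N_{1}$ (equal to that of $A$) exceeds $2$ in absolute value, so $A$ is hyperbolic with real eigenvalues, and the induced $f = f_{\theta}$ is the required hyperbolic toral automorphism; by construction it depends only on $\theta$.

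For such an $f$, a standard computation gives $\#\mathrm{Fix}(f^{n}) = |\det(A^{n} - \id)| = \trace(A^{n}) - 2$ in the orientation-preserving case with positive eigenvalues. Summing,
\begin{align*}
z(\log\zeta_{f})'(z) = \sum_{n \geq 1}\bigl(\trace(A^{n}) - 2\bigr) z^{n} = \trace\bigl[(\id - zA)^{-1}\bigr] - \frac{2}{1 - z}.
\end{align*}
Substituting $z \mapsto z^{\ell}$ and setting $R(w) := 2/(1-w)$ (manifestly independent of $\theta$) yields
\begin{align*}
z^{\ell}(\log\zeta_{f})'(z^{\ell}) + R(z^{\ell}) = \trace\bigl[(\id - z^{\ell} A)^{-1}\bigr].
\end{align*}

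To rewrite the right-hand side as the trace appearing in \eqref{eq:dlogzeta_vs_F-G}, I would upgrade Theorem~\ref{thm:gen_fn_rat} from a vector identity to a matrix identity for the continuants $M_{n} := \left[\begin{array}{cc}p_{n-1} & p_{n}\\ q_{n-1} & q_{n}\end{array}\right]$. Writing each continuant past position $k$ as $M_{k + m\ell + j} = A^{m} N_{0} C_{j}$, where $C_{j} := \prod_{i=k+1}^{k+j} \left[\begin{array}{cc}0 & 1\\1 & a_{i}\end{array}\right]$ for $0 \leq j < \ell$, and defining
\begin{align*}
F(z) := \sum_{n \geq 0} z^{n} M_{n},\qquad P(z) := \sum_{0 \leq n < k} z^{n} M_{n},\qquad S(z) := \sum_{k \leq n < k + \ell} z^{n} M_{n},
\end{align*}
a direct geometric-series computation gives $F(z) - P(z) = (\id - z^{\ell} A)^{-1} S(z)$. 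Inverting and setting $X(z) := F(z)$, $Y(z) := P(z)$, $U(z) := \id$, $V(z) := S(z)$, the cyclic property of the trace delivers $\trace[(\id - z^{\ell} A)^{-1}] = \trace[U(z)^{-1}(X(z) - Y(z)) V(z)^{-1}]$, as required. Non-degeneracy of $V(z)$ over $\mathbb{Z}(z)$ follows since $S(z) = z^{k} N_{0}\sum_{j=0}^{\ell-1} z^{j} C_{j}$ has determinant $(-1)^{k} z^{2k} \det\!\bigl(\sum_{j} z^{j} C_{j}\bigr)$, which is a non-zero polynomial in $z$ because the constant-in-$z$ term of the inner sum is $C_{0} = \id$.

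The main obstacle will be the orientation-reversing case $\det A = -1$ (i.e., $\ell$ odd), in which $\#\mathrm{Fix}(f^{n})$ acquires $n$-parity-dependent sign corrections and the clean formula $\trace(A^{n}) - 2$ breaks down. This can be bypassed either by replacing $f$ with $f^{2}$ (doubling the effective period from $\ell$ to $2\ell$) or by absorbing the parity corrections into a slightly more elaborate yet still $\theta$-independent $R$; in either case the argument of the previous paragraph goes through verbatim after adjusting indices. A secondary, much milder issue is verifying the matrix-valued analogue $F(z) = P(z) + (\id - z^{\ell}A)^{-1} S(z)$ of Theorem~\ref{thm:gen_fn_rat}, which follows from essentially the same computation as the vector case, applied columnwise to the continuants.
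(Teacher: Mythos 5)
Your proposal is correct and follows essentially the same route as the paper: construct $f$ from $A=N_0N_1N_0^{-1}$, convert $\card\mathrm{Fix}(f^n)=|\det(\id-A^n)|$ into $\trace(A^n)$ minus a $\theta$-independent correction, sum to get $\trace[(\id-z^\ell A)^{-1}]$, and decompose that resolvent via the generating-function identity for the continuant matrices. The only (cosmetic) differences are your grouping of the factors (you take $U=\id$ and $V=S$, where the paper pulls out $U(z)=z^\kay N_0$ separately) and your treatment of odd $\ell$ (the paper keeps the extra term $(-z)^\ell/(1-(-z)^\ell)$ coming from $|\det(\id-N_1^n)|=\trace(N_1^n)-(1+(-1)^{n\ell})$ rather than passing to $f^2$).
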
  
\noindent
Applying classical arguments already gives 
$(\log\zeta_f)'(z)=-R(z)+\trace (\id-z M_f)^{-1}$, 
where $M_f$ denotes the matrix corresponding to $f$. 
Simplifying the expression for the operator on the right-hand side of~\eqref{eq:dlogzeta_vs_F-G} recovers this result.
However, the reason for considering the equality in Theorem~\ref{thm:main_thm} 
is to relate the set of poles of $F_p$ and $F_q$
to the set of poles of $\zeta_f$ or
equivalently, the spectrum of the transfer operator for $f$. 

As one application of Theorem~\ref{thm:main_thm}, 
recall that the radius of convergence 
$\rho_{\zeta_f}$ 
of the dynamical zeta function $\zeta_f$ is related to the 
topological entropy of $f$ via
\begin{equation}\label{eq:entropy-radius_of_conv}
\frac{1}{\rho_{\zeta_f}}=\exp(h_\mathrm{top}(f))
\end{equation}
By equations~\eqref{eq:Levy-radius_of_conv} and~\eqref{eq:entropy-radius_of_conv}
we get the following as a Corollary.
\begin{theorem}\label{thm:Levy_vs_entropy}
Let 
$\theta\in\mathbb{R}\setminus\mathbb{Q}$ be a quadratic irrational.
Let 
$M_\theta$ 
denote the matrix corresponding to the prime hyperbolic element of 
$\mathrm{PSL}(2,\mathbb{Z})$ 
associated to $\theta$.
Let 
$f_\theta=f_{M_\theta}$ 
denote the induced hyperbolic toral automorphism.
Then
\begin{equation}
h_{\mathrm{top}}(f_\theta)=\ell \beta(\theta)
\end{equation} 
\end{theorem}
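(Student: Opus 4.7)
The plan is to chain together equations~\eqref{eq:Levy-radius_of_conv} and~\eqref{eq:entropy-radius_of_conv} through the common matrix appearing on each side. The matrix $M_\theta$ in the theorem statement is the prime hyperbolic stabiliser of $\theta$ in $\mathrm{PSL}(2,\mathbb{Z})$, while the matrix $N_\theta$ appearing in~\eqref{eq:Levy-radius_of_conv} is built from the continued-fraction period of $\theta$; in the notation of Theorem~\ref{thm:gen_fn_rat} both coincide with $N_1$ up to conjugation by the pre-period matrix $N_0$, and hence have equal spectral radius. Thus~\eqref{eq:Levy-radius_of_conv} rewrites as $\ell\beta(\theta)=\log\mathrm{rad}(M_\theta)$, and the theorem reduces to the entropy formula
\begin{equation*}
h_\mathrm{top}(f_\theta)=\log\mathrm{rad}(M_\theta).
\end{equation*}

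By~\eqref{eq:entropy-radius_of_conv} this is in turn equivalent to locating the dominant singularity of $\zeta_{f_\theta}$ at $z=\mathrm{rad}(M_\theta)^{-1}$. The natural route is via Theorem~\ref{thm:main_thm}: the right-hand side of~\eqref{eq:dlogzeta_vs_F-G} is a rational function of $z$ whose poles arise from the vanishing of $\det U(z)$ and $\det V(z)$. By construction these determinants carry the factor $\det(\id-z^{\ell}N_0N_1N_0^{-1})$ that already controls the denominators of $F_p$ and $F_q$ in Theorem~\ref{thm:gen_fn_rat}, and whose zeros in $z^{\ell}$ are the reciprocals of the eigenvalues of a matrix conjugate to $N_\theta$. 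The singularity closest to the origin therefore occurs at $z^{\ell}=\mathrm{rad}(N_\theta)^{-1}$, and since $R$ is independent of $\theta$ and contributes only at the fixed roots of unity produced by the Lefschetz-type identity $(\log\zeta_f)'(z)=-R(z)+\trace(\id-zM_f)^{-1}$ mentioned just after Theorem~\ref{thm:main_thm}, it cannot generate a closer pole. Hence $\rho_{\zeta_{f_\theta}}=\mathrm{rad}(M_\theta)^{-1}$.

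Combining the two steps,
\begin{equation*}
h_\mathrm{top}(f_\theta)=-\log\rho_{\zeta_{f_\theta}}=\log\mathrm{rad}(M_\theta)=\log\mathrm{rad}(N_\theta)=\ell\beta(\theta),
\end{equation*}
which is the claim. The main obstacle is the singularity analysis of the trace on the right-hand side of~\eqref{eq:dlogzeta_vs_F-G}: one must verify that no cancellation in $X(z)-Y(z)$ and no unexpected zero of $\det U(z)$ or $\det V(z)$ produces a pole inside the disc $\{|z|<\mathrm{rad}(M_\theta)^{-1/\ell}\}$ that would move $\rho_{\zeta_{f_\theta}}$ closer to the origin. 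An elementary safety net is available: the Lefschetz count $\#\mathrm{Fix}(f_\theta^n)=|\det(I-M_\theta^n)|$ expresses $\zeta_{f_\theta}$ directly as a rational function of $z$ with dominant singularity at $\mathrm{rad}(M_\theta)^{-1}$, recovering the same conclusion independently of Theorem~\ref{thm:main_thm}; but the derivation through Theorem~\ref{thm:main_thm} is the thematically appropriate one for this paper.
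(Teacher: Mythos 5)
Your conclusion is correct, and your overall architecture --- chaining \eqref{eq:Levy-radius_of_conv} and \eqref{eq:entropy-radius_of_conv} through $\specrad(M_\theta)=\specrad(N_1)$ --- is exactly what the introduction advertises. The paper's written proof, however, bypasses the zeta function entirely: it quotes Bowen's formula $h_{\mathrm{top}}(f)=\log\specrad(M_f)$ for two-dimensional toral automorphisms (Section~\ref{subsect:entropy+toral_automorphism}) and combines it with Corollary~\ref{cor:Levy_constant_formula} and conjugacy-invariance of the spectral radius. Your ``safety net'' is that same direct proof in a different guise: $\card\mathrm{Fix}(f_\theta^n)=|\det(\id-M_\theta^n)|$ by \eqref{eq:Fixfn=|det|} grows like $\specrad(M_\theta)^n$, so $\rho_{\zeta_{f_\theta}}=\specrad(M_\theta)^{-1}$ and \eqref{eq:entropy-radius_of_conv} finishes the job; this is complete and needs nothing from Theorem~\ref{thm:main_thm}. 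What the zeta-function route buys is the thematic link to the poles of $F_p$ and $F_q$; what the direct route buys is brevity.

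One step in your primary route is misstated, though the safety net rescues the conclusion. The poles of $\trace\left[U(z)^{-1}(X(z)-Y(z))V(z)^{-1}\right]$ do not ``arise from the vanishing of $\det U(z)$ and $\det V(z)$'', and those determinants do not carry the factor $\det(\id-z^{\ell}N_0N_1N_0^{-1})$: from Corollary~\ref{cor:W-rational} one has $\det U(z)=(-1)^{\kay}z^{2\kay}$, and $\det V(z)$ is an explicit polynomial (computed in the proposition following that corollary) with no a priori relation to $\det(\id-z^\ell N)$. The relevant singularities enter through the entries of $X(z)$, that is, through the generating functions $F_p$ and $F_q$ themselves, and the no-cancellation statement you flag as the main obstacle is precisely Corollary~\ref{cor:gen_fn_rat-radius}, which the paper establishes by a Perron--Frobenius positivity argument. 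If you want the zeta-function route to stand on its own, cite that corollary rather than the determinants of $U$ and $V$. A last small point, shared with the paper's own proof: identifying $\specrad(M_\theta)$ with $\specrad(N_1)$ presumes $\ell$ has been chosen so that the prime element is genuinely conjugate to $N_1$ (the parity conditions of Section~\ref{subsubsect:hyp_elements+quad_irr}), which is worth stating explicitly.
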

\noindent
What is meant by the 
{\it corresponding prime hyperbolic element} 
and also by the 
{\it induced hyperbolic toral automorphism} will be explained in the text.
Note that we also give a more direct proof of this result in Section~\ref{subsect:entropy+toral_automorphism}.
%
%
%
\subsection{Notation and Terminology.}
Let $\mathbb{N}$ and $\mathbb{N}_0$ denote the set of positive and non-negative integers respectively.
Let $\mathbb{Z}$ denote the set of integers and for each $\ell\in\mathbb{N}$, let $\mathbb{Z}_\ell=\mathbb{Z}/\ell\mathbb{Z}$.
Denote the rational, real and complex number fields by $\mathbb{Q}$, $\mathbb{R}$ and $\mathbb{C}$ respectively.
A {\it algebraic number} $\theta$ is a complex number which is the root of a non-zero polynomial with coefficients in 
$\mathbb{Q}$ (or equivalently, after clearing denominators, $\mathbb{Z}$).
An algebraic number is an {\it algebraic integer} if it is a root of some monic polynomial with coefficients in $\mathbb{Z}$.
Given an algebraic number $\theta$, the {\it minimal polynomial over $\mathbb{Q}$} 
is the unique monic polynomial with coefficients in $\mathbb{Q}$ of minimal degree, for which $\theta$ is a root.  
The {\it degree} of $\theta$ is the degree of its minimal polynomial.
Algebraic numbers with the same minimal polynomial are said to be {\it Galois conjugate}.
Given an arbitrary polynomial $\eta$, over either $\mathbb{R}$ or $\mathbb{C}$, we denote the discriminant by $\discr(\eta)$.

Given $\theta\in\mathbb{R}$, 
let $\lfloor\theta\rfloor$ denote the {\it integer part} of $\theta$, 
{\it i.e.\/}, greatest integer less than or equal to $\theta$, 
and let $\{\theta\}=\theta-\lfloor\theta\rfloor$ denote the {\it fractional part} of $\theta$.

Given an arbitrary set $S$ we denote its cardinality by $\card S$.
For a self-map $F$ of an arbitrary set $S$, we denote the set of fixed points by $\mathrm{Fix}(F)$.


Given a finite-dimensional vector space $V$, over $\mathbb{R}$ or $\mathbb{C}$, 
we denote the space of linear operators on $V$ by $L(V,V)$. 
Let $\mathrm{id}_V$, or just $\mathrm{id}$, denote the identity matrix and
given a linear map $A\in L(V,V)$, 
denote the transpose by $A^\top$.
We denote the trace by $\trace(A)$, the determinant by $\det(A)$.
Denote the spectrum of $A$ by $\spec(A)$, and 
the spectral radius by $\specrad(A)$.

\subsection*{Acknowledgements.}
The author would like to thank the Mathematics Institute at 
Uppsala University for their hospitality, and IME-USP for their continuing support.

\section{Preliminaries.}\label{sect:prelim}
%
%
%
%
%
\subsection{Continued fractions.}\label{subsect:ctd_frac_1}
Here we recall some basic properties of simple continued fraction expansions.
Our notation, and several of the results, are as in~\cite{BelovaHazard2017a}.
However, we include this again here for the readers convenience. 
For more details concerning continued fractions we recommend~\cite{KhinchinBook,HardyWrightBook}. 

%
\subsubsection{Best rational approximants.}\label{subsubsect:best_approx}
Let $\theta\in[0,1]\setminus\mathbb{Q}$.
The statements given below, with suitable modifications, also hold for general irrational points outside the unit interval.
However, to simplify the exposition we restrict ourselves to the case $\theta\in [0,1]\setminus\mathbb{Q}$.

The simple continued fraction expansion of $\theta\in[0,1]\setminus\mathbb{Q}$ is given by 
\begin{equation}\label{eq:theta-simple_ctd_frac_exp}
\theta
=\left[a_1,a_2,\ldots\right]
=\frac{1}{a_1+}\frac{1}{a_2+}\cdots\frac{1}{a_n+}\cdots \ ,
\end{equation}
where $a_1,a_2,\ldots$ are positive integers called the {\it partial quotients} of the simple continued fraction expansion.
Define the {\it $n$th convergent} of $\theta$ to be
\begin{equation}\label{eq:best_rat_approx}
[a_1,a_2,\ldots,a_n]
=\frac{1}{a_1+}\frac{1}{a_2+}\cdots\frac{1}{a_{n-1}+}\frac{1}{a_n} \ .
\end{equation}
We denote this rational number by
$p_n/q_n$, 
where $p_n$ and $q_n$ are positive integers having no common factors.
\begin{comment}
The following property is satisfied for all $n\in\mathbb{N}$
\begin{equation}
\left|\theta-\frac{p_n}{q_n}\right|\leq \inf_{\frac{p}{q}\in\mathbb{Q}: q\leq q_n}\left|\theta-\frac{p}{q}\right|
\end{equation}
For this reason $p_n/q_n$ is also called the {\it $n$th best rational approximant} of $\theta$.
\end{comment}
We will also call them the $n$th best rational approximants for obvious reasons (see~\cite{HardyWrightBook}).
Equation~\eqref{eq:best_rat_approx} may be expressed in matrix form as
\begin{align}
\left[\begin{array}{c}p_n\\ q_n\end{array}\right]
&=
\left[\begin{array}{cc}0 & 1\\ 1 & a_1\end{array}\right]
\left[\begin{array}{cc}0 & 1\\ 1 & a_2\end{array}\right]
\cdots
\left[\begin{array}{cc}0 & 1\\ 1 & a_{n-1}\end{array}\right]
\left[\begin{array}{c}1\\ a_{n}\end{array}\right]\label{eq:pq_n}
\end{align}
and similarly
\begin{align}
\left[\begin{array}{c}p_{n-1}\\ q_{n-1}\end{array}\right]
&=
\left[\begin{array}{cc}0 & 1\\ 1 & a_1\end{array}\right]
\left[\begin{array}{cc}0 & 1\\ 1 & a_2\end{array}\right]
\cdots
\left[\begin{array}{cc}0 & 1\\ 1 & a_{n-1}\end{array}\right]
\left[\begin{array}{c}0\\ 1\end{array}\right]\label{eq:pq_n-1}
\end{align}
Combining equations~\eqref{eq:pq_n} and~\eqref{eq:pq_n-1} therefore gives
\begin{equation}\label{eq:recursion_rel_simple}
\left[\begin{array}{cc}p_{n-1} & p_{n}\\ q_{n-1} & q_{n}\end{array}\right]
=
\left[\begin{array}{cc}0&1\\ 1&a_1\end{array}\right]
\left[\begin{array}{cc}0&1\\ 1&a_2\end{array}\right]
\cdots
\left[\begin{array}{cc}0 & 1\\ 1 & a_{n-1}\end{array}\right]
\left[\begin{array}{cc}0 & 1\\ 1 & a_{n}\end{array}\right]
\end{equation}
Hence inductively we get the following recurrence relations:
\begin{equation}
\left[\begin{array}{cc}p_{n-1} & p_{n}\\ q_{n-1} & q_{n}\end{array}\right]
=
\left[\begin{array}{cc}p_{n-2} & p_{n-1}\\ q_{n-2} & q_{n-1}\end{array}\right]
\left[\begin{array}{cc}0 & 1\\ 1 & a_{n}\end{array}\right],
\quad
\left[\begin{array}{cc}p_{0} & p_{1}\\ q_{0} & q_{1}\end{array}\right]
=
\left[\begin{array}{cc}0 & 1\\ 1 & a_1\end{array}\right]
\end{equation}
More generally, for any non-negative integer $m\leq n-2$,
\begin{equation}
\label{eq:recursion_rel_general}
\left[\begin{array}{cc}p_{n-1} & p_{n}\\ q_{n-1} & q_{n}\end{array}\right]
=
\left[\begin{array}{cc}p_{n-m-2} & p_{n-m-1}\\ q_{n-m-2}&q_{n-m-1}\end{array}\right]
\left[\begin{array}{cc}0 & 1\\ 1 & a_{n-m}\end{array}\right]
\cdots
\left[\begin{array}{cc}0 & 1\\ 1 & a_{n}\end{array}\right]
\end{equation}
For suitable integers $n$ and $m$ define
\begin{equation}
\label{eq:AB_definition}
\left[\begin{array}{cc}B^{(m)}_{n-m-1} & B^{(m+1)}_{n-m-1}\\ A^{(m)}_{n-m-1} & A^{(m+1)}_{n-m-1}\end{array}\right]
=
\left[\begin{array}{cc}0 & 1\\ 1 & a_{n-m}\end{array}\right]
\cdots
\left[\begin{array}{cc}0 & 1\\ 1 & a_{n}\end{array}\right]
\end{equation}
Then~\eqref{eq:recursion_rel_general} becomes
\begin{equation}\label{eq:rel_pq_AB}
\left[\begin{array}{cc}p_{n-1}& p_{n}\\ q_{n-1}&q_{n}\end{array}\right]
=
\left[\begin{array}{cc}p_{n-m-2} & p_{n-m-1}\\ q_{n-m-2} & q_{n-m-1}\end{array}\right]
\left[\begin{array}{cc}B^{(m)}_{n-m-1} & B^{(m+1)}_{n-m-1}\\ A^{(m)}_{n-m-1} & A^{(m+1)}_{n-m-1}\end{array}\right]
\end{equation}
Observe that the $A^{(m)}_{n-m}$ and $B^{(m)}_{n-m}$ are well-defined since we have the relation
\begin{equation}\label{eq:A_relation_1}
\left[\begin{array}{ll}
B^{(m)}_{n-m-1} & B^{(m+1)}_{n-m-1}\\
A^{(m)}_{n-m-1} & A^{(m+1)}_{n-m-1}
\end{array}\right]
=
\left[\begin{array}{cc}
0&1\\1&a_{n-m}
\end{array}\right]
\left[\begin{array}{ll}
B^{(m-1)}_{n-m} & B^{(m)}_{n-m}\\
A^{(m-1)}_{n-m} & A^{(m)}_{n-m}
\end{array}\right]
\end{equation}
Observe that the above is also implied by the corresponding dual relation
\begin{equation}\label{eq:A_relation_2}
\left[\begin{array}{ll}
B^{(m)}_{n-m-1} & B^{(m+1)}_{n-m-1}\\
A^{(m)}_{n-m-1} & A^{(m+1)}_{n-m-1}
\end{array}\right]
=
\left[\begin{array}{ll}
B^{(m-1)}_{n-m-1} & B^{(m)}_{n-m-1}\\
A^{(m-1)}_{n-m-1} & A^{(m)}_{n-m-1}
\end{array}\right]
\left[\begin{array}{cc}
0&1\\1&a_{n}
\end{array}\right]
\end{equation}
The reason for choosing this notation is the following.
The sequences $p_n$ and $q_n$ satisfy 
the recursion relation~\eqref{eq:recursion_rel_simple} which 
may be stated in the form
\begin{align}
p_{n}&=a_{n}p_{n-1}+p_{n-2};& p_0&=0& p_1&=1\label{eq:recurrence_rel_p}\\
q_{n}&=a_{n}q_{n-1}+q_{n-2};& q_0&=1& q_1&=a_1\label{eq:recurrence_rel_q}
\end{align}
Applying the recurrence relations inductively we more generally get the following expressions
\begin{equation}\label{eq:AB_expansion}
\left.\begin{array}{lcllcl}
p_{n}&=& A^{(1)}_{n-1}p_{n-1}+B^{(1)}_{n-1}p_{n-2} \qquad &q_{n}&=& A^{(1)}_{n-1}q_{n-1}+B^{(1)}_{n-1}q_{n-2}\\
     &=& A^{(2)}_{n-2}p_{n-2}+B^{(2)}_{n-2}p_{n-3} \qquad &     &=& A^{(2)}_{n-2}q_{n-2}+B^{(2)}_{n-2}q_{n-3}\\ 
     &\vdots&                                             &     &\vdots& \\
     &=& A^{(n-1)}_{1}p_{1}+B^{(n-1)}_{1}p_{0}     \qquad &     &=& A^{(n-1)}_{1}q_{1}+B^{(n-1)}_{1}q_{0}     
\end{array}\right.
\end{equation}
where 
$A^{(m)}_{n-m}$ and $B^{(m)}_{n-m}$ 
are non-negative integers satisfying the recurrence relations
\begin{equation}\label{eq:AB_recurrence_rel}
\begin{gathered}
B^{(m)}_{n-m}
=A^{(m-1)}_{n-m+1} 
\qquad
A^{(m)}_{n-m}
=B^{(m-1)}_{n-m+1}+a_{n-m+1}A^{(m-1)}_{n-m+1}\\
B^{(0)}_{n}=0 
\qquad 
B^{(1)}_{n-1}=1=A^{(0)}_{n}
\end{gathered}
\end{equation}
%
%
%
%
\subsubsection{The Gauss transformation.}\label{subsubsect:Gauss_Map}
Let $\mathrm{T}$ denote the Gauss transformation on the interval $[0,1]$, {\it i.e.\/},
\begin{equation}
\mathrm{T}(\theta)=
\left\{\begin{array}{ll}
\left\{\frac{1}{\theta}\right\} & \theta\in (0,1]\\
0 & \theta=0
\end{array}\right.
\end{equation}
where $\{x\}$ denotes the fractional part of the real number $x$.
\begin{comment}
Recall that the Gauss transformation possesses an 
ergodic absolutely continuous invariant 
probability measure $\mu$ given by
\begin{equation}
\mu=\frac{1}{\log 2}\frac{dx}{1+x}
\end{equation}
\end{comment}
Observe that $\mathrm{T}$ acts as a shift on the simple continued fraction representation of $\theta$.
Namely, take $\theta\in [0,1]$ and let $\hat\theta=\mathrm{T}(\theta)$. 
If the simple continued fraction expansion of $\theta$ is given by~\eqref{eq:theta-simple_ctd_frac_exp} above
then $\hat\theta$ has simple continued fraction expansion 
\begin{equation}
\hat\theta=[a_2,a_3,a_4,\ldots]
\end{equation}
For each non-negative integer $n$, let 
$p_n/q_n$
denote the $n$th convergent of $\theta$
and let 
$\hat{p}_n/\hat{q}_n$ 
denote the $n$th convergent of $\hat\theta$.
Observe that
\begin{equation}
\frac{p_n}{q_n}
=\frac{1}{a_1+[a_2,\ldots,a_n]}
=\frac{1}{a_1+\frac{\hat{p}_{n-1}}{\hat{q}_{n-1}}}
=\frac{\hat{q}_{n-1}}{a_1\hat{q}_{n-1}+\hat{p}_{n-1}}
\end{equation}
Recall that, by definition, $p_n$ and $q_n$ do not have common factors. 
However, a priori we do not know whether 
$\hat{q}_{n-1}$ 
and 
$a_1\hat{q}_{n-1}+\hat{p}_{n-1}$ 
have common factors or not.
(If not, we would have a direct relation between 
$p_n, q_n$ and $\hat{p}_{n-1}, \hat{p}_{n-1}$.)
\begin{theorem}\label{thm:p/q-action_under_Gauss_map}
Let $\theta\in [0,1]\setminus \mathbb{Q}$ and let $\hat\theta=\mathrm{T}(\theta)$.
Let $\theta$ and $\hat\theta$ have $n$th convergents $p_n/q_n$ and $\hat{p}_n/\hat{q}_n$ respectively.
Then, for all $n\in\mathbb{N}$,
\begin{equation}\label{eq:p/q-action_under_Gauss_map}
\left[\begin{array}{c}p_n\\ q_n\end{array}\right]
=
\left[\begin{array}{cc}0&1\\ 1&a_1\end{array}\right]
\left[\begin{array}{c}\hat{p}_{n-1}\\ \hat{q}_{n-1}\end{array}\right]
\end{equation}
\end{theorem}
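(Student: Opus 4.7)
The plan is to exploit the matrix product representation~\eqref{eq:pq_n} of the convergents and to observe that, because the Gauss transformation simply drops the first partial quotient, the matrix product expressing the $(n-1)$th convergent of $\hat\theta$ coincides with the product for the $n$th convergent of $\theta$ after removing the leftmost factor. The asserted identity~\eqref{eq:p/q-action_under_Gauss_map} is then precisely the statement that this leftmost factor equals $\left[\begin{array}{cc}0 & 1\\ 1 & a_1\end{array}\right]$.

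Concretely, I would first write down~\eqref{eq:pq_n} for $\theta=[a_1,a_2,\ldots]$, giving
\[
\left[\begin{array}{c}p_n\\ q_n\end{array}\right]
=
\left[\begin{array}{cc}0 & 1\\ 1 & a_1\end{array}\right]
\left[\begin{array}{cc}0 & 1\\ 1 & a_2\end{array}\right]
\cdots
\left[\begin{array}{cc}0 & 1\\ 1 & a_{n-1}\end{array}\right]
\left[\begin{array}{c}1\\ a_n\end{array}\right].
\]
Then, applying the same formula to $\hat\theta=[a_2,a_3,\ldots]$ with index $n-1$, so that its first $n-1$ partial quotients $\hat{a}_1,\ldots,\hat{a}_{n-1}$ coincide with $a_2,\ldots,a_n$, I would obtain
\[
\left[\begin{array}{c}\hat{p}_{n-1}\\ \hat{q}_{n-1}\end{array}\right]
=
\left[\begin{array}{cc}0 & 1\\ 1 & a_2\end{array}\right]
\cdots
\left[\begin{array}{cc}0 & 1\\ 1 & a_{n-1}\end{array}\right]
\left[\begin{array}{c}1\\ a_n\end{array}\right].
\]
Comparing the two displays yields~\eqref{eq:p/q-action_under_Gauss_map} at once, since the right-hand side of the first is precisely the left multiplication of the second by $\left[\begin{array}{cc}0 & 1\\ 1 & a_1\end{array}\right]$.

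The only genuine subtlety is the one flagged just before the statement: whether the integer vector $[\hat{q}_{n-1},\,a_1\hat{q}_{n-1}+\hat{p}_{n-1}]^\top$ obtained on the right is in fact the coprime representative $[p_n,\,q_n]^\top$, rather than some non-trivial integer multiple of it. The matrix identity already gives equality as integer vectors, so this reduces to checking that the entries on the right are coprime, which follows from one line of the Euclidean algorithm: since $\hat{p}_{n-1}$ and $\hat{q}_{n-1}$ are coprime by definition, $\gcd(\hat{q}_{n-1},\,a_1\hat{q}_{n-1}+\hat{p}_{n-1})=\gcd(\hat{q}_{n-1},\hat{p}_{n-1})=1$. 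I do not foresee any real obstacle in this proof; the entire content is the bookkeeping of indices in the matrix product formula, together with this elementary coprimality check.
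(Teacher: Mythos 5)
Your proof is correct, but it takes a genuinely different route from the one in the paper. You read the identity off the matrix product formula~\eqref{eq:pq_n}: applying it to $\theta=[a_1,a_2,\ldots]$ and to $\hat\theta=[a_2,a_3,\ldots]$ at index $n-1$, the two products differ only by the leftmost factor $\left[\begin{smallmatrix}0&1\\1&a_1\end{smallmatrix}\right]$, and your closing remark disposes of the only possible worry --- that the resulting integer vector might fail to be the coprime representative --- via the one-line computation $\gcd(\hat{q}_{n-1},\,a_1\hat{q}_{n-1}+\hat{p}_{n-1})=\gcd(\hat{q}_{n-1},\hat{p}_{n-1})=1$. That gcd computation is in fact a complete proof on its own: the display preceding the theorem gives $p_n/q_n=\hat{q}_{n-1}/(a_1\hat{q}_{n-1}+\hat{p}_{n-1})$ as rational numbers, and since both sides are in lowest terms with positive denominators, numerators and denominators must agree. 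The paper instead takes the longer road it flags just before the statement: it posits a common factor $\lambda_n$ with $\lambda_n p_n=\hat{q}_{n-1}$ and $\lambda_n q_n=a_1\hat{q}_{n-1}+\hat{p}_{n-1}$, feeds the recurrences~\eqref{eq:recurrence_rel_p}--\eqref{eq:recurrence_rel_q} for both $\theta$ and $\hat\theta$ into these relations to obtain a homogeneous linear system in $\lambda_{n+1}-\lambda_n$ and $\lambda_{n+1}-\lambda_{n-1}$, uses the nonsingularity of $\left[\begin{smallmatrix}p_{n-1}&p_n\\q_{n-1}&q_n\end{smallmatrix}\right]$ to conclude that $\lambda_n$ is constant, and checks $\lambda_1=1$. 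Both arguments rest on the same background facts (the recurrences for the coprime representatives, which are equivalent to~\eqref{eq:pq_n}), so nothing is circular in your version; yours is shorter and, in its gcd form, entirely elementary. The only cosmetic point is the case $n=1$, where the product formula for $\hat{p}_0/\hat{q}_0$ degenerates and one should fall back on the conventions $\hat{p}_0=0$, $\hat{q}_0=1$ from~\eqref{eq:pq_n-1}, under which~\eqref{eq:p/q-action_under_Gauss_map} still reads $(p_1,q_1)^\top=(1,a_1)^\top$.
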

\begin{proof}
Observe that if $\hat{q}_{n-1}$ and $a_1\hat{q}_{n-1}+\hat{p}_{n-1}$ have a common factor then for some positive integer $\lambda_n$ we have the following equalities
\begin{equation}
\label{eq:aux_expr3}
\left.\begin{array}{ll}
\lambda_n p_n = \hat{q}_{n-1}                   \quad & \lambda_{n+1} p_{n+1} = \hat{q}_n\\
\lambda_n q_n = a_1\hat{q}_{n-1}+\hat{p}_{n-1}  \quad & \lambda_{n+1} q_{n+1} = a_1\hat{q}_n+\hat{p}_n
\end{array}\right.
\end{equation}
But, by the recurrence relations~\eqref{eq:recurrence_rel_p} and~\eqref{eq:recurrence_rel_q}, and since $\hat{a}_n = a_{n+1}$, it follows that
\begin{align}
\begin{array}{lll}
p_{n+1} = a_{n+1}p_n+p_{n-1} \quad & \hat{p}_n = a_{n+1}\hat{p}_{n-1}+\hat{p}_{n-2}\\
q_{n+1} = a_{n+1}q_n+q_{n-1} \quad & \hat{q}_n = a_{n+1}\hat{q}_{n-1}+\hat{q}_{n-2}
\end{array}
\end{align}
Therefore 
\begin{equation}
\left.\begin{array}{ll}
\lambda_{n+1} \left(a_{n+1}p_n+p_{n-1}\right)&= a_{n+1}\hat{q}_{n-1}+\hat{q}_{n-2}\\
\lambda_{n+1} \left( a_{n+1}q_n+q_{n-1}\right)&=a_1\left(a_{n+1}\hat{q}_{n-1}+\hat{q}_{n-2}\right)+\left(a_{n+1}\hat{p}_{n-1}+\hat{p}_{n-2}\right)
\end{array}\right.
\end{equation}
Rearranging gives
\begin{align}
a_{n+1}\left(\lambda_{n+1}p_n-\hat{q}_{n-1}\right)+\left(\lambda_{n+1}p_{n-1}-\hat{q}_{n-2}\right)&=0\\
a_{n+1}\left(\lambda_{n+1}q_n-a_1\hat{q}_{n-1}-\hat{p}_{n-1}\right)+\left(\lambda_{n+1}q_{n-1}-a_1\hat{q}_{n-2}-\hat{p}_{n-2}\right)&=0
\end{align}
Then applying~\eqref{eq:aux_expr3}
\begin{equation}
\left.\begin{array}{ll}
a_{n+1}\left(\lambda_{n+1}p_n-\lambda_n p_n\right)+\left(\lambda_{n+1}p_{n-1}-\lambda_{n-1}p_{n-1}\right)&=0\\
a_{n+1}\left(\lambda_{n+1}q_n-\lambda_n q_n\right)+\left(\lambda_{n+1}q_{n-1}-\lambda_{n-1}q_{n-1}\right)&=0
\end{array}\right.
\end{equation}
Hence, rearranging once more,
\begin{equation}
\left.\begin{array}{ll}
p_{n-1}\left(\lambda_{n+1}-\lambda_{n-1}\right)
+a_{n+1}p_n\left(\lambda_{n+1}-\lambda_n\right)
&=0\\
q_{n-1}\left(\lambda_{n+1}-\lambda_{n-1}\right)
+a_{n+1}q_n\left(\lambda_{n+1}-\lambda_n\right)
&=0
\end{array}\right.
\end{equation}
In matrix form this can be expressed as
\begin{equation}
\left[\begin{array}{ll} p_{n-1} & p_{n}\\ q_{n-1} & q_{n}\end{array}\right]
\cdot
\left[\begin{array}{ll} 1 & 0\\ 0 & a_{n+1}\end{array}\right]
\cdot
\left[\begin{array}{l} \lambda_{n+1}-\lambda_n \\ \lambda_{n+1}-\lambda_{n-1}\end{array}\right] 
= 
\left[\begin{array}{l} 0 \\ 0\end{array}\right] 
\end{equation}
As $\theta$ is irrational, neither of the matrices on the left-hand side is singular.
Therefore, for all $n$,
\begin{equation}
\left[\begin{array}{l} \lambda_{n+1}-\lambda_n \\ \lambda_{n+1}-\lambda_{n-1}\end{array}\right] 
=
\left[\begin{array}{l}0\\ 0\end{array}\right]
\end{equation} 
{\it i.e.\/}, $\lambda_{n+1}=\lambda_n$ for all $n$.
But, by the recurrence relations~\eqref{eq:recurrence_rel_p} and~\eqref{eq:recurrence_rel_q}, 
we know that $\lambda_1=p_1/\hat{q}_0=1/1=1$, and the theorem follows.
\end{proof}
\noindent
An inductive argument, using the preceding result (Theorem~\ref{thm:p/q-action_under_Gauss_map}) 
together with the Binomial Theorem, now gives us the following.
\begin{corollary}
For any 
$r\in\mathbb{N}$ the following holds.
Let 
$\theta\in[0,1]\setminus\mathbb{Q}$ 
and 
$\hat\theta=\mathrm{T}(\theta)$.
Let 
$\theta$ and $\hat\theta$ 
have $n$th convergents 
$p_n/q_n$ and $\hat{p}_n/\hat{q}_n$ 
respectively.
Then for all $n\in\mathbb{N}$,
\begin{equation}
\left[\begin{array}{c}(p_n)^r(q_n)^0\\ (p_n)^{r-1}(q_n)^1\\ \vdots\\ (p_n)^1 (q_n)^{r-1}\\ (p_n)^0(q_n)^r\end{array}\right]
=
E(a_1;r)
\left[\begin{array}{c}(\hat{p}_{n-1})^r(\hat{q}_{n-1})^0\\ (\hat{p}_{n-1})^{r-1}(\hat{q}_{n-1})^1\\ \vdots\\ (\hat{p}_{n-1})^1(\hat{q}_{n-1})^{r-1}\\ (\hat{p}_n)^0(\hat{q}_{n-1})^r\end{array}\right]
\end{equation}
where
\begin{equation}
E(a_1;r)=
\left[\begin{array}{cccccc}
0     &0        &\cdots   &\cdots   &0   &1\\
0     &         &         &\revddots&1        &a_1\\ 
\vdots&         &\revddots&1        &2a_1     &a_1^2\\
\vdots&\revddots&\revddots&\revddots&         &\vdots\\
0     &1              &\binom{r-1}{1}a_1&\cdots & \binom{r-1}{r-2}a_1^{r-2} & a_1^{r-1}\\
1     &\binom{r}{1}a_1&\cdots         &\cdots & \binom{r}{r-1}a_1^{r-1}   & a_1^r
\end{array}\right]
\end{equation}
\end{corollary}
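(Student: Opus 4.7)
The plan is to establish the identity by direct substitution followed by a single application of the Binomial Theorem, although an induction on $r$ is also available. Theorem~\ref{thm:p/q-action_under_Gauss_map} supplies the two scalar relations
\begin{equation*}
p_n = \hat{q}_{n-1}, \qquad q_n = a_1\hat{q}_{n-1}+\hat{p}_{n-1}.
\end{equation*}
The first step is to substitute these directly into each monomial appearing as a coordinate of the left-hand vector: the $(i{+}1)$th entry (with $i=0,1,\ldots,r$) is
\begin{equation*}
(p_n)^{r-i}(q_n)^{i} = (\hat{q}_{n-1})^{r-i}\bigl(\hat{p}_{n-1}+a_1\hat{q}_{n-1}\bigr)^{i}.
\end{equation*}

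The second step is to expand the second factor by the Binomial Theorem and absorb the prefactor $(\hat{q}_{n-1})^{r-i}$, producing
\begin{equation*}
(p_n)^{r-i}(q_n)^{i} = \sum_{k=0}^{i}\binom{i}{k}a_1^{i-k}(\hat{p}_{n-1})^{k}(\hat{q}_{n-1})^{r-k}.
\end{equation*}
After the reindexing $j = r-k$, each summand is proportional to $(\hat{p}_{n-1})^{r-j}(\hat{q}_{n-1})^{j}$, which is precisely the $(j{+}1)$th entry of the right-hand vector. Reading off coefficients then gives the explicit entries of $E(a_1;r)$, namely
\begin{equation*}
E(a_1;r)_{ij} = \binom{i}{r-j}a_1^{\,i+j-r} \quad\text{for } j\geq r-i, \qquad E(a_1;r)_{ij}=0 \quad\text{otherwise.}
\end{equation*}

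The final step is a verification that this closed-form expression reproduces the matrix displayed in the statement: the condition $j<r-i$ accounts for the lower-left triangle of zeros, the entries along the anti-diagonal $j=r-i$ are all $\binom{i}{i}a_1^{0}=1$, and the bottom row ($i=r$) becomes $\binom{r}{r-j}a_1^{j}$, matching the printed expansion. The only real obstacle is notational bookkeeping: one must be careful that the vector entries are ordered by increasing exponent of $q_n$ (resp.\ $\hat{q}_{n-1}$) from top to bottom, and that the binomial coefficients $\binom{i}{r-j}$ are displayed in the form $\binom{i}{i-(r-j)}=\binom{i}{i+j-r}$ as in the printed matrix; both are immediate from $\binom{i}{k}=\binom{i}{i-k}$. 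No genuine induction is needed, though the same result can alternatively be proved by induction on $r$ by peeling off a single factor of $p_n$ or $q_n$ and applying Theorem~\ref{thm:p/q-action_under_Gauss_map} to the leftover degree-$(r-1)$ vector.
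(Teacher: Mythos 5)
Your proof is correct and takes essentially the route the paper intends: the paper offers no written proof beyond the remark that the corollary follows from Theorem~\ref{thm:p/q-action_under_Gauss_map} together with the Binomial Theorem, and your direct substitution of $p_n=\hat{q}_{n-1}$, $q_n=a_1\hat{q}_{n-1}+\hat{p}_{n-1}$ followed by a binomial expansion and reindexing is exactly that argument carried out explicitly, with the coefficient bookkeeping correctly reproducing the displayed matrix $E(a_1;r)$ (zeros below the anti-diagonal, ones on it, and $\binom{r}{j}a_1^{j}$ in the bottom row).
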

\begin{remark}
The matrix $E(a_1;r)$
can be factored as $E(a_1;r)=R(r)U(a_1;r)$ where $D(a_1;r)$ is the upper triangular matrix 
\begin{equation}
U(a_1;r)=
\left[\begin{array}{cccccc}
1     &\binom{r}{1}a_1&\cdots           &\cdots   & \binom{r}{r-1}a_1^{r-1}   & a_1^r\\
0     &1              &\binom{r-1}{1}a_1&\cdots   & \binom{r-1}{r-2}a_1^{r-2} & a_1^{r-1}\\
\vdots&\ddots         &\ddots           &\ddots   &                           &\vdots\\
\vdots&               &\ddots           &1        &2a_1                       &a_1^2\\
0     &               &                 &\ddots   &1                          &a_1\\ 
0     &0              &\cdots           &\cdots   &0                          &1\\
\end{array}\right]
\end{equation}
and $R(r)$ is the idempotent permutation matrix given by
\begin{equation}
R(r)=
\left[\begin{array}{ccccc}
0     &0        &\cdots   &0        &1\\
0     &         &\revddots &1        &0\\ 
\vdots&\revddots&\revddots&\revddots&\vdots\\
0     &1        &\revddots&         &0\\
1     &0        &\cdots   &0        &0
\end{array}\right]
\end{equation}  
Consequently $\det E(a_1;r)=\det R(r)=(-1)^{\sum_{2\leq n\leq r}(n-1)}$.
\end{remark}

We now consider iterating the action of the Gauss transformation.
Use the following notation.
For each positive integer $m$, 
let 
$\theta^{\caret m}=\mathrm{T}^m(\theta)$.
Then
\begin{equation}
\theta^{\caret m}
=[a_1^{\caret m},a_2^{\caret m},\ldots]
=[a_{m+1},a_{m+2},\ldots]
\end{equation}
Observe that 
$a^{\caret m}_n=a_{m+n}$, 
for each $m$ and $n$.
Denote the $n$th convergent of 
$\theta^{\caret m}$ 
by 
$p_n^{\caret m}/q_n^{\caret m}$.
Iterating the relation~\eqref{eq:p/q-action_under_Gauss_map}
and using $a^{\caret m}_n=a_{m+n}$, we find the following.

\begin{corollary}\label{cor:pn/qn-action_under_Gauss_map}
For each $r\in\mathbb{N}$ the following holds.
Given $\theta$ and $\hat{\theta}$ as above, for any non-negative integers $m,n$
\begin{equation}\label{eq:pn/qn-action_under_Gauss_map}
\left[\begin{array}{c}
(p_n^{\caret m})^r(q_n^{\caret m})^0\\
(p_n^{\caret m})^{r-1}(q_n^{\caret m})^1\\
\vdots\\
(p_n^{\caret m})^1(q_n^{\caret m})^{r-1}\\
(p_n^{\caret m})^0(q_n^{\caret m})^r\end{array}\right]
=
E
\left[\begin{array}{c}
(p_0^{\caret m+n})^r(q_0^{\caret m+n})^0\\
(p_0^{\caret m+n})^{r-1}(q_0^{\caret m+n})^1\\
\vdots\\
(p_0^{\caret m+n})^{1}(q_0^{\caret m+n})^{r-1}\\ 
(p_0^{\caret m+n})^0(q_0^{\caret m+n})^r
\end{array}\right]
\end{equation}
where $E=E(a_{m+1};r)E(a_{m+2};r)\cdots E(a_{m+n};r)$.
\end{corollary}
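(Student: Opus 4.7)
The plan is to establish Corollary~\ref{cor:pn/qn-action_under_Gauss_map} by a direct induction on $n$, using the preceding (unnumbered) corollary as the one-step building block applied repeatedly along the orbit of the Gauss map. The only ingredient needed beyond that corollary is the shift identity $a_k^{\caret m}=a_{m+k}$ for all $k,m$, which is immediate from the fact that $\mathrm{T}$ acts as the shift on continued fraction expansions and is already recorded in the text.

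For the base case $n=1$, I would apply the preceding corollary with $\theta$ replaced by $\theta^{\caret m}$ and $\hat\theta$ by $\theta^{\caret m+1}=\mathrm{T}(\theta^{\caret m})$; the first partial quotient $a_1$ of $\theta$ then becomes $a^{\caret m}_1=a_{m+1}$, so that
\[
\begin{bmatrix}(p_1^{\caret m})^r\\ \vdots\\ (q_1^{\caret m})^r\end{bmatrix}
=E(a_{m+1};r)\begin{bmatrix}(p_0^{\caret m+1})^r\\ \vdots\\ (q_0^{\caret m+1})^r\end{bmatrix},
\]
which is~\eqref{eq:pn/qn-action_under_Gauss_map} at level $n=1$. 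For the inductive step I would apply the preceding corollary once more to $\theta^{\caret m}$ for general $n$, producing
\[
\begin{bmatrix}(p_n^{\caret m})^r\\ \vdots\\ (q_n^{\caret m})^r\end{bmatrix}
=E(a_{m+1};r)\begin{bmatrix}(p_{n-1}^{\caret m+1})^r\\ \vdots\\ (q_{n-1}^{\caret m+1})^r\end{bmatrix},
\]
and then feed the right-hand column into the inductive hypothesis at level $n-1$ with starting index $m+1$. That hypothesis supplies the factor $E(a_{m+2};r)\cdots E(a_{m+n};r)$ acting on the monomial vector in $p_0^{\caret m+n}, q_0^{\caret m+n}$, and concatenating produces precisely the claimed $E=E(a_{m+1};r)E(a_{m+2};r)\cdots E(a_{m+n};r)$.

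I do not expect a genuine obstacle here; the corollary is essentially a packaging statement. The only point that warrants care is the bookkeeping of indices at each iteration, namely that the $j$th application introduces the factor $E(a_{m+j};r)$ and that after $n$ applications the convergent index has descended from $n$ to $0$ while the iterate index has ascended from $m$ to $m+n$. Both ledgers line up automatically provided the shift identity $a_k^{\caret m}=a_{m+k}$ is invoked cleanly at every stage and the preceding corollary is read with the correct substitutions $\theta\rightsquigarrow\theta^{\caret m+j-1}$, $\hat\theta\rightsquigarrow\theta^{\caret m+j}$ at the $j$th step.
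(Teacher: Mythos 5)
Your proof is correct and follows essentially the same route as the paper, which simply asserts the corollary by ``iterating the relation'' of the preceding corollary together with the shift identity $a^{\caret m}_n=a_{m+n}$; your induction on $n$ is just the careful write-up of that iteration. The index bookkeeping you describe is exactly right, and the only cosmetic remark is that the case $n=0$ (where $E$ is the empty product, i.e.\ the identity) could be taken as the trivial base case instead of $n=1$.
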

\begin{remark}\label{rmk:pn/qn-action_under_Gauss_map}
In particular, for $r=1$ and positive integers $m$ and $n$
\begin{equation}\label{eq:pn/qn-action_under_Gauss_map-2d}
\left[\begin{array}{c}p_n^{\caret m}\\ q_n^{\caret m}\end{array}\right]
=
\left[\begin{array}{cc}0&1\\ 1&a_{m+1}\end{array}\right]
\left[\begin{array}{cc}0&1\\ 1&a_{m+2}\end{array}\right]
\cdots
\left[\begin{array}{cc}0&1\\ 1&a_{m+n}\end{array}\right]
\left[\begin{array}{c}p^{\caret m+n}_0\\ q^{\caret m+n}_0\end{array}\right]
\end{equation}
\end{remark}
\vspace{5pt}

%
%
%
%
\subsubsection{Quadratic Irrationals.}\label{subsubsect:Quad_Irr}
Let $\theta\in[0,1]\setminus\mathbb{Q}$ 
be a quadratic irrational.
By this we will mean that $\theta$ is an 
algebraic number whose minimal polynomial 
$\omega_\theta$ is of (strict) degree two.
Then $\theta$ possesses a unique Galois conjugate which we denote by $\theta'$.
%
\begin{comment}
\begin{claim}
If quadratic irrational has periodic continued fraction expansion
\begin{equation}
\left[\overline{a_1,a_2,\ldots,a_\mathfrak{\ell}}\right]
\end{equation}
then $\theta'$ has continued fraction expansion
\begin{equation}
\left[\overline{a_\ell,a_{\ell-1},\ldots,a_\mathfrak{1}}\right]
\end{equation}
\end{claim}
\end{comment}

A theorem of Lagrange~\cite[p.56]{KhinchinBook} 
states that $\theta$ has a pre-periodic simple continued fraction expansion, 
{\it i.e.}, there exists a finite sequence of positive integers 
$a_1,a_2,\ldots,a_\kay,\ldots,a_{\kay+\ell}$ 
such that
\begin{equation}
\theta=[a_1,a_2,\ldots,a_{\kay},\overline{a_{\kay+1},\ldots,a_{\kay+\ell}}]
\end{equation}
(We adopt the convention that $\kay=0$ actually means the continued fraction expansion is periodic).
We call the minimal such $\ell$ the {\it period}. 
We call any such $\kay$ a {\it preperiod} and the least such preperiod the 
{\it minimal preperiod} of the simple continued fraction expansion.
\begin{remark}
Since $a_{n+\ell}=a_{n}$ for all $n>\kay$, by~\eqref{eq:AB_definition} it follows 
that, for all non-negative integers $m$ and $n$ satisfying $n-m>\kay$, we have
\begin{equation}
A^{(m)}_{n-m+\ell-1}=A^{(m)}_{n-m-1}, \qquad B^{(m)}_{n-m+\ell-1}=B^{(m)}_{n-m-1}
\end{equation}
\begin{comment}
This follows as
\begin{align}
\left[\begin{array}{cc}B^{(m)}_{n-m-1}&B^{(m+1)}_{n-m-1}\\ A^{(m)}_{n-m-1}&A^{(m)}_{n-m-1}\end{array}\right]
&=
\left[\begin{array}{cc}0&1\\ 1&a_{n-m}\end{array}\right]
\left[\begin{array}{cc}0&1\\ 1&a_{n-m+1}\end{array}\right]
\cdots
\left[\begin{array}{cc}0&1\\ 1&a_{n}\end{array}\right]\\
&=
\left[\begin{array}{cc}0&1\\ 1&a_{n-m+\ell}\end{array}\right]
\left[\begin{array}{cc}0&1\\ 1&a_{n-m+1+\ell}\end{array}\right]
\cdots
\left[\begin{array}{cc}0&1\\ 1&a_{n+\ell}\end{array}\right]\\
&=
\left[\begin{array}{cc}B^{(m)}_{n+\ell-m-1}&B^{(m+1)}_{n+\ell-m-1}\\ A^{(m)}_{n+\ell-m-1}&A^{(m)}_{n+\ell-m-1}\end{array}\right]
\end{align}
\end{comment}
In particular, the following quantities are well-defined
\begin{equation}\label{eq:AB_periodic}
A^{(m)}_{(j)}=A^{(m)}_{\kay+n}, \qquad B^{(m)}_{(j)}=B^{(m)}_{\kay+n} \qquad \mbox{for any} \ \ n\geq 0, \ \ \kay+n=j\mod \ell
\end{equation}
\end{remark}
Recall that we defined
$\theta^{\caret m}=\mathrm{T}^{m}(\theta)$ for $m\geq 0$. 
In particular, 
$\theta^{\caret \kay}=\mathrm{T}^{\kay}(\theta)$. 
Hence 
$\mathrm{T}^\ell(\theta^{\caret \kay})=\theta^{\caret \kay}$.
Thus 
$\theta^{\caret \kay}$ 
is a solution to the equation
\begin{equation}
\theta^{\caret \kay}=\frac{1}{a_{\kay+1}+}\frac{1}{a_{\kay+2}+}\cdots\frac{1}{a_{\kay+\ell}+\theta^{\caret \kay}}
\end{equation}
and $\theta$ can be expressed as
\begin{equation}
\theta=\frac{1}{a_{1}+}\frac{1}{a_{2}+}\cdots\frac{1}{a_{\kay}+\theta^{\caret \kay}}
\end{equation}
These can be written in matrix form as
\begin{equation}
\left[\begin{array}{c}\theta^{\caret \kay}\\ 1\end{array}\right]
=
N_1
\left[\begin{array}{c}\theta^{\caret \kay}\\ 1\end{array}\right],
\qquad
\left[\begin{array}{c}\theta\\ 1\end{array}\right]
=
N_0
\left[\begin{array}{c}\theta^{\caret \kay}\\ 1\end{array}\right]
\end{equation}
where
\begin{equation}\label{eq:a1a2...al_1}
\begin{gathered}
N_1
=
\left[\begin{array}{cc}0&1\\ 1&a_{\kay+1}\end{array}\right]
\left[\begin{array}{cc}0&1\\ 1&a_{\kay+2}\end{array}\right]
\cdots
\left[\begin{array}{cc}0&1\\ 1&a_{\kay+\ell}\end{array}\right]\\ \vspace{5pt}
N_0
=
\left[\begin{array}{cc}0&1\\ 1&a_{1}\end{array}\right]
\left[\begin{array}{cc}0&1\\ 1&a_{2}\end{array}\right]
\cdots
\left[\begin{array}{cc}0&1\\ 1&a_{\kay}\end{array}\right]
\end{gathered}
\end{equation}
Here, as usual, we identify the matrices $N_0$ and $N_1$ with their corresponding elements in  
$\mathrm{PGL}(2,\mathbb{Z})$.
\begin{comment}
\begin{lemma}\label{lem:q_in_terms_of_A}
In the case when 
$\theta\in [0,1]\setminus\mathbb{Q}$ 
has periodic continued fraction expansion of period $\ell$,
we have $q_n=A^{(n)}_\ell$ and $p_n=A^{(n-1)}_1$ for all $n$.
\end{lemma}
%
\begin{proof}
We give a proof by induction.
First, by the recurrence relations~\eqref{eq:recurrence_rel_q} 
we find that
\begin{align}
q_1&=a_1 &\qquad q_2&=a_2q_1+q_0=a_2a_1+1\\
p_1&=1 &\qquad p_2&=a_2p_1+p_0=a_2 
\end{align}
By 
equation~\eqref{eq:AB_definition} 
and~\eqref{eq:AB_recurrence_rel} 
we have 
\begin{align}
A^{(1)}_\ell&=a_1 &\qquad A^{(2)}_\ell&=1+a_1a_2\\
A^{(0)}_1   &=1   &\qquad A^{(1)}_1&=a_2
\end{align}
Therefore the lemma holds for $n=1,2$.
Next, assume the hypothesis holds for all positive $k<n$.
The recurrence relations~\eqref{eq:recurrence_rel_q} and~\eqref{eq:AB_splitting_rel_2}, 
together with the assumption that $a_{n+\ell}=a_n$ for all $n$ and the induction hypothesis, implies that
\begin{equation}
q_n
=a_nq_{n-1}+q_{n-2}
=a_{n+\ell}A^{(n-1)}_\ell+A^{(n-2)}_\ell
=A^{(n)}_\ell
\end{equation}
An analogous computation shows $p_n=A^{(n-1)}_1$.
Thus $q_n=A^{(n)}_\ell$ and $p_n=A^{(n-1)}_{1}$, and by induction this identity holds for all $n$.
\end{proof}
\end{comment}
%
From equation~\eqref{eq:AB_definition}, equation~\eqref{eq:AB_recurrence_rel}, and applying definition~\eqref{eq:AB_periodic} we have
\begin{equation}\label{eq:a1a2...al_2}
N_1
=
\left[\begin{array}{cc}
B^{(\ell-1)}_{(\kay)} & B^{(\ell)}_{(\kay)}\\A^{(\ell-1)}_{(\kay)} & A^{(\ell)}_{(\kay)}
\end{array}\right]
=
\left[\begin{array}{cc}
A^{(\ell-2)}_{(\kay+1)} & A^{(\ell-1)}_{(\kay+1)}\\A^{(\ell-1)}_{(\kay)} & A^{(\ell)}_{(\kay)}
\end{array}\right]
\end{equation}
and
\begin{equation}
N_0
=
\left[\begin{array}{cc}
B^{(\kay-1)}_{(0)} & B^{(\kay)}_{(0)}\\A^{(\kay-1)}_{(0)} & A^{(\kay)}_{(0)}
\end{array}\right]
=
\left[\begin{array}{cc}
A^{(\kay-2)}_{(1)} & A^{(\kay-1)}_{(1)}\\A^{(\kay-1)}_{(0)} & A^{(\kay)}_{(0)}
\end{array}\right]
\end{equation}
Hence $\theta^{\caret\kay}$ has minimal polynomial
\begin{equation}\label{eq:min_poly}
\omega_{\theta^{\caret\kay}}(z)
=A^{(\ell-1)}_{(\kay)} z^2 
+ \left(A^{(\ell)}_{(\kay)}-A^{(\ell-2)}_{(\kay+1)}\right)z 
- A^{(\ell-1)}_{(\kay+1)} 
\end{equation}
and, since $\theta=N_0(\theta^{\caret\kay})$ (now viewing $N_0$ as a linear fractional transformation), 
it follows that $\theta$ has minimal polynomial
\begin{equation}
\omega_\theta(z)
=
\left(A^{(\kay-2)}_{(1)}-A^{(\kay-1)}_{(0)}z\right)^2
\omega_{\theta^{\caret\kay}}(N_0^{-1}(z))
\end{equation}
(Note that the above shows the minimal 
polynomial of $\theta$ has degree two 
or less. However, since $\theta$ has 
non-terminating continued fraction 
expansion, we know it cannot be rational, 
{\it i.e.\/}, cannot have minimal 
polynomial of degree one.)

\subsection{The projective general linear group}\label{subsect:PGL}
We collect here some properties of 
the two-dimensional real projective general linear group and its subgroups.
We recommend the reader consult~\cite{Sarnak1982,Series1984,Pollicott1986,Faivre1992}.
Here, our presentation is slightly different as 
the real projective general linear group
is more natural than 
the real projective special linear group
when we consider simple continued fraction expansions (cf.~\cite{Sarnak2007}).

Throughout, 
$\mathbb{H}^+$ and $\mathbb{H}^-$ 
denote respectively the upper and lower half-planes of $\mathbb{C}$, 
both with the orientations induced by the natural embeddings into $\mathbb{C}$.
We endow both half-planes with the Poincar\'e metric $(dx^2+dy^2)/y^2$. 
Geodesics with respect to this metric are half-circles and straight-lines 
perpendicular to the real axis.
The (limiting) intersection points of these geodesics with the real axis are called the {\it ends}. 
Given two distinct points $a,b\in\mathbb{H}^\pm$ we denote 
the unique oriented hyperbolic geodesic arc from $a$ to $b$ by $[a,b]$.

\begin{comment}
Let 
\begin{align}
\mathrm{GL}(2,\mathbb{R})
&=\left\{ A\in\mathrm{Mat}_{2\times 2}(\mathbb{R}) : \det(A)=\pm 1\right\}\\
\mathrm{SL}(2,\mathbb{R})
&=\left\{ A\in\mathrm{Mat}_{2\times 2}(\mathbb{R}) : \det(A)=1\right\}
\end{align}
Trivially 
$\mathrm{SL}(2,\mathbb{R})$ 
is a normal subgroup of 
$\mathrm{GL}(2,\mathbb{R})$.
In fact
\begin{equation}
\mathrm{GL}(2,\mathbb{R})
=\mathrm{SL}(2,\mathbb{R})\cup E\cdot \mathrm{SL}(2,\mathbb{R}) 
\end{equation}
where 
$E=\left[\begin{array}{cc}1&0\\0&-1\end{array}\right]$.
Thus
$\mathrm{SL}(2,\mathbb{R})$ 
is an index 2 subgroup of 
$\mathrm{GL}(2,\mathbb{R})$.
(Recall, more generally, that all index two subgroups of a given group $G$ are normal.)
\end{comment}
Denote by
$\mathrm{PGL}(2,\mathbb{R})$ 
and 
$\mathrm{PSL}(2,\mathbb{R})$ 
respectively 
the 2-dimensional real projective general linear group and
the 2-dimensional real projective special linear group.
Then $\mathrm{PSL}(2,\mathbb{R})$ is a subgroup of $\mathrm{PGL}(2,\mathbb{R})$ of index 2 and,
letting $R$ denote the element of $\mathrm{PGL}(2,\mathbb{R})$ of order 2 represented by the matrix
$\left[\begin{array}{cc}1&0\\0&-1\end{array}\right]$,
we have $\mathrm{PSL}(2,\mathbb{R})\cong\mathrm{PGL}(2,\mathbb{R}) / \langle R\rangle$.
Given 
a subgroup $H$ of $\mathrm{PGL}(2,\mathbb{R})$ and
an element $S\in\mathrm{PGL}(2,\mathbb{Z})$
let 
$[S]_{H}=\{TST^{-1} : T\in H\}$. 
In particular,
\begin{equation}
[S]_{\mathrm{PGL}(2,\mathbb{Z})}=[S]_{\mathrm{PSL}(2,\mathbb{Z})}\cup R\cdot[S]_{\mathrm{PSL}(2,\mathbb{Z})}\cdot R
\end{equation}

The group
$\mathrm{PGL}(2,\mathbb{R})$ is isomorphic to 
the group of linear fractional transformations acting on $\mathbb{C}$ which preserve the real line $\mathbb{R}$,
and 
the subgroup
$\mathrm{PSL}(2,\mathbb{R})$ is isomorphic to 
the subgroup of 
linear fractional transformations acting on $\mathbb{C}$ which preserve the upper half-plane $\mathbb{H}^+$ 
(and consequently also the lower half-plane $\mathbb{H}^-$).
The left-coset $R\cdot \mathrm{PSL}(2,\mathbb{R})$ consists of linear fractional transformations
interchanging $\mathbb{H}^+$ and $\mathbb{H}^-$.
Thus 
$\mathrm{PGL}(2,\mathbb{R})$ 
acts on 
$\mathbb{H}^-\cup\mathbb{H}^+$, 
while the action of
$\mathrm{PSL}(2,\mathbb{R})$ 
restricts separately to $\mathbb{H}^-$ and to $\mathbb{H}^+$. 
Similar statements hold for appropriately chosen subgroups of 
$\mathrm{PGL}(2,\mathbb{R})$ 
and 
$\mathrm{PSL}(2,\mathbb{R})$, 
such as
$\mathrm{PGL}(2,\mathbb{Z})$ 
and 
$\mathrm{PSL}(2,\mathbb{Z})$. 
Since, for any group $G$ acting on a some set $X$ and has finite subgroup $H$ we have $(X/H)/(G/H)\cong X/G$, 
we therefore get the following, 
%
\begin{equation}
\mathbb{H}^+/\mathrm{PSL}(2,\mathbb{Z})\cong (\mathbb{H}^-\cup\mathbb{H}^+)/\mathrm{PGL}(2,\mathbb{Z})
\end{equation}
The above surface is the {\it modular surface}, which we henceforth denote by $\mathcal{M}$.
It can be represented as a quotient of $\mathbb{H}^+$ and also of $\mathbb{H}^-$.
Denote by $\pi^\pm\colon \mathbb{H}^\pm\to\mathcal{M}$ the corresponding canonical projections.
We will also consider the double cover
\begin{equation}
(\mathbb{H}^-\cup\mathbb{H}^+)/\mathrm{PSL}(2,\mathbb{Z})
\end{equation}
The above surface we refer to as the {\it double of the modular surface}, denoted by $\mathcal{M}_2$.
This consists of two connected components, $\mathcal{M}_2^-$ and $\mathcal{M}_2^+$, both isomorphic to $\mathcal{M}$.  
In fact, there are two isomorphisms: one orientation-preserving, the other orientation-reversing. 
The orientation-preserving isomorphism is induced by $R$.
The orientation-reversing isomorphism is induced by complex conjugation.
Observe that both $R$ and complex conjugation are idempotents which descend to idempotents interchanging $\mathcal{M}_2^-$ and $\mathcal{M}_2^+$.
We will refer to the isomorphism induced by complex conjugation as {\it complex conjugation on $\mathcal{M}_2$},
and we will call sets 
$S^-\subset \mathcal{M}_2^-$ and 
$S^+\subset\mathcal{M}_2^+$
{\it conjugate-related} if this isomorphism interchanges $S^-$ and $S^+$.

Recall that the half-planes $\mathbb{H}^-$ and $\mathbb{H}^+$ 
can both be endowed with the Poincar\'e metric 
$(dx^2+dy^2)/y^2$.
This metric is invariant under the action of 
$\mathrm{PGL}(2,\mathbb{R})$.
(Since the subgroups $\langle R\rangle$ and $\mathrm{PSL}(2,\mathbb{R})$ both leave the metric invariant.)
Therefore the Poincar\'e metric descends to a metric on the modular surface, 
which we call the {\it Poincar\'e metric on the modular surface}, or just the {\it Poincar\'e metric} when there is no possible ambiguity.

The modular surface is a non-compact hyperbolic Riemann surface of finite area, with respect to the volume form induced by the Poincar\'e metric,
and has one puncture and two ramification points (see, {\it e.g.},~\cite{GunningBook1962}).

\subsubsection{Hyperbolic elements and geodesics}\label{subsubsect:hyp_elements}
An element of
$\mathrm{PGL}(2,\mathbb{R})$ 
is {\it hyperbolic} if its action on 
$\mathbb{C}$ 
possesses two (distinct) fixed points, both of which 
are contained in the extended real line. 
\begin{remark}\label{rmk:hyp_conjugate_to_diagonal}
The element 
$M\in\mathrm{PGL}(2,\mathbb{R})$ 
is hyperbolic if and only if it is conjugate, 
via an element of 
$\mathrm{PSL}(2,\mathbb{R})$, 
either to an element of the form 
$\left[\begin{array}{cc}t&0\\0&t^{-1}\end{array}\right]$ 
(if $M\in \mathrm{PSL}(2,\mathbb{R})$), 
or an element of the form
$\left[\begin{array}{cc}t&0\\0&-t^{-1}\end{array}\right]$ 
(if $M\in R\cdot\mathrm{PSL}(2,\mathbb{R})$), 
where, in both cases, $t$ is real with $t>1$.
\end{remark}
Let 
$M\in \mathrm{PGL}(2,\mathbb{R})$ 
be hyperbolic, with fixed points 
$\theta_+$ and $\theta_-$.
Let 
$\gamma_M$ 
denote the unique circle in $\mathbb{C}$ perpendicular to 
the extended real axis passing through $\theta_+$ and $\theta_-$.
Observe that
$M$ preserves $\gamma_M$.
Let 
$\gamma_M^\pm=\gamma_M\cap\mathbb{H}^\pm$. 
Observe further that complex conjugation interchanges $\gamma_M^-$ and $\gamma_M^+$.
Endow 
$\gamma_M^+$ 
with an arbitrary orientation and give 
$\gamma_M^-$ 
the orientation induced by complex conjugation.

Consider the quotients of 
$\mathbb{H}^-\cup\mathbb{H}^+$, 
and the corresponding quotients of the subset $\gamma_M$, 
by the discrete subgroups 
$\mathrm{PSL}(2,\mathbb{Z})$ and
$\mathrm{PGL}(2,\mathbb{Z})$.
The oriented hyperbolic geodesics $\gamma^+_M$ and $\gamma^-_M$ descend 
to conjugate-related oriented hyperbolic geodesics on $\mathcal{M}_2$ (the orientations chosen above were made so we had agreement here)
\footnote{Thus $\gamma_M^+$ and $\gamma_M^-$ descend
to a pair of geodesics on $\mathcal{M}$.
The geodesics $\gamma_M^+$ and $\gamma_M^-$ 
descend to the same geodesic on $\mathcal{M}$ 
if and only if 
$\gamma_M$ is preserved by some non-trivial 
element of $\mathrm{PGL}(2,\mathbb{Z})$ of negative determinant, 
necessarily interchanging $\gamma_M^-$ and $\gamma_M^+$.
See~\cite{Sarnak2007} for more information on when this occurs.}.
Either of these geodesics (and hence both) descends 
to a closed geodesic on $\mathcal{M}_2$ 
if and only if 
$\gamma_M$ is preserved by some hyperbolic element $N$ of 
$\mathrm{PGL}(2,\mathbb{Z})$
.

If 
$N\in\mathrm{PGL}(2,\mathbb{R})$ 
is conjugate to $M$ by some element $U$ of 
$\mathrm{PGL}(2,\mathbb{Z})$
then either
$\gamma_{N}^+ =U(\gamma_M^+)$ 
or
$\gamma_{N}^+ =U(\gamma_M^-)$. 
Thus the (unordered) pair of geodesics
$\gamma_{N}^+$ and $\gamma_{N}^-$ 
descend to the same (unordered) pair of conjugate-related hyperbolic geodesics 
in 
$\mathcal{M}_2$ as the (unordered) pair $\gamma_{M}^+$ and $\gamma_{M}^-$.
Conversely, 
given a conjugate-related pair of closed geodesics
$\gamma^-$ and $\gamma^+$ 
on $\mathcal{M}_2$, a lift of $\gamma^+$ induces a lift of $\gamma^-$ (and vice-versa) 
and any two different lifts of $\gamma^\pm$ are related via an element $U$ of $\mathrm{PGL}(2,\mathbb{Z})$.
Geodesically completing the lifts of $\gamma^-$ and $\gamma^+$ gives a circle perpendicular to the extended real axis or vertical straight line, 
and hence a hyperbolic element $M$ of $\mathrm{PGL}(2,\mathbb{R})$. 
Any other hyperbolic element $N$ constructed in this way, from another lift, 
will be related via some $U$ to $M$, {\it i.e.} $M=UNU^{-1}$.  
Hence we get the following variant of a well-known result:
\begin{proposition}
Conjugacy classes 
$[M]_{\mathrm{PGL}(2,\mathbb{Z})}$ 
of hyperbolic elements $M$ in 
$\mathrm{PGL}(2,\mathbb{Z})$ 
are in bijective correspondence with conjugate-related 
pairs of oriented closed geodesics on 
$\mathcal{M}_2$.
\end{proposition}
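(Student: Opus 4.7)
The plan is to define maps $\Phi$ and $\Psi$ in both directions and verify they are mutual inverses; the preceding subsection has assembled essentially all of the geometric tools, so the work reduces to bookkeeping. Define $\Phi([M])$ for hyperbolic $M\in\mathrm{PGL}(2,\mathbb{Z})$ as the unordered pair of oriented closed curves in $\mathcal{M}_2$ obtained by projecting $\gamma_M^{\pm}$, each parametrised by a single traversal of a fundamental segment for the $\langle M\rangle$-translation. Since $M$ itself preserves $\gamma_M^{\pm}$ setwise and lies in $\mathrm{PGL}(2,\mathbb{Z})$, the two projected loops are closed geodesics in $\mathcal{M}_2^{\pm}$, interchanged by complex conjugation and so conjugate-related. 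Well-definedness on the class follows because if $N=UMU^{-1}$ with $U\in\mathrm{PGL}(2,\mathbb{Z})$ then $\gamma_N=U(\gamma_M)$, with the $\pm$ labels preserved or swapped according to $\det U$; either way, since $U$ is a deck transformation for $\mathbb{H}^-\cup\mathbb{H}^+\to\mathcal{M}_2$, the downstairs pair is unchanged.

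For the inverse $\Psi$, given a conjugate-related pair $(\bar\gamma^-,\bar\gamma^+)$ of oriented closed geodesics in $\mathcal{M}_2$, lift $\bar\gamma^+$ to an oriented half-circle $\gamma^+\subset\mathbb{H}^+$ with ends $\theta_{\pm}$ and let $M\in\mathrm{PGL}(2,\mathbb{Z})$ be the orientation-preserving element whose translation along $\gamma^+$ realises a single traversal of $\bar\gamma^+$. Then $M$ fixes $\theta_{\pm}$ and is hyperbolic; two different lifts of $\bar\gamma^+$ differ by an element of $\mathrm{PGL}(2,\mathbb{Z})$, so yield $\mathrm{PGL}(2,\mathbb{Z})$-conjugate choices of $M$, while exchanging $\bar\gamma^+$ for $\bar\gamma^-$ corresponds to complex conjugation and likewise does not change the conjugacy class. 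The compositions $\Psi\circ\Phi$ and $\Phi\circ\Psi$ are then the identity because constructing $\gamma_M^{\pm}$ from $M$ inverts the construction of $M$ from a lift of $\bar\gamma^+$.

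The main obstacle is multiplicity bookkeeping: at the level of unparametrised geodesic loci, the correspondence matches only \emph{primitive} conjugacy classes with primitive closed geodesics. To recover the bijection for arbitrary hyperbolic elements, a ``closed geodesic'' on the right-hand side must be read as carrying an implicit number of traversals, corresponding to the exponent $k$ in $M=M_0^k$ where $M_0$ is the primitive hyperbolic stabiliser of $\gamma_M$ in $\mathrm{PGL}(2,\mathbb{Z})$. Once this convention is fixed, the remaining subtleties—the orientation swap induced by $R$ and the one induced by complex conjugation across $\mathcal{M}_2\to\mathcal{M}$—have both been catalogued already, and the bijection follows.
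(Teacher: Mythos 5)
Your construction is the same as the paper's: send $[M]$ to the projection of the pair $\gamma_M^{\pm}$, and recover $M$ from a lift of a closed geodesic as the element realising one traversal; well-definedness in both directions follows because any two lifts differ by an element of $\mathrm{PGL}(2,\mathbb{Z})$. Your explicit remark about traversal multiplicity (reading a closed geodesic as carrying the exponent $k$ in $M=M_0^k$) makes precise something the paper leaves implicit, and it is genuinely needed for injectivity.

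There is, however, one place where your explicit $\Phi/\Psi$ formulation breaks down: hyperbolic $M\in\mathrm{PGL}(2,\mathbb{Z})$ with $\det M=-1$. Such an $M$ lies in $R\cdot\mathrm{PSL}(2,\mathbb{Z})$, hence interchanges $\mathbb{H}^+$ and $\mathbb{H}^-$ and therefore interchanges $\gamma_M^+$ and $\gamma_M^-$; it does not act as a translation on $\gamma_M^+$ (only $\langle M^2\rangle$ does), so ``a fundamental segment for the $\langle M\rangle$-translation'' of $\gamma_M^{\pm}$ is undefined and $\Phi([M])$ is not specified by your recipe. Symmetrically, your $\Psi$ always returns a deck transformation of $\mathbb{H}^+\to\mathcal{M}_2^+$, i.e.\ an element of $\mathrm{PSL}(2,\mathbb{Z})$, so $\Phi\circ\Psi=\mathrm{id}$ is only checked on positive-determinant classes; with any natural extension of $\Phi$, the distinct classes $[M]$ and $[M^2]$ (for $\det M=-1$) would be sent to the same pair of closed geodesics with the same traversal count, destroying injectivity. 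To be fair, the paper's own sketch (``geodesically completing the lifts \dots\ gives \dots\ a hyperbolic element $M$'') is equally silent here, and everything used downstream (prime elements, which necessarily lie in $\mathrm{PSL}(2,\mathbb{Z})$, and the matrices $N_0N_1N_0^{-1}$ with $\ell$ even) has determinant $+1$; but if the proposition is to cover all hyperbolic elements of $\mathrm{PGL}(2,\mathbb{Z})$ as stated, the $\det=-1$ case needs a separate convention.
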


\subsubsection{Prime hyperbolic elements and prime geodesics}\label{subsubsect:prime_hyp}
The hyperbolic element 
$M\in\mathrm{PGL}(2,\mathbb{Z})$ 
is {\it prime} if given 
any point $p$ in $\gamma_M^\pm$, 
the image $M(p)$ also lies in $\gamma_M^\pm$, 
and the geodesic segment $[p,M(p)]$ of $\gamma_M^\pm$  
is a fundamental domain
for the action of the subgroup 
\begin{equation}
\mathfrak{S}_\mathbb{Z}
=
\left\{N\in\mathrm{PSL}(2,\mathbb{Z}) : N(\gamma_M^\pm)=\gamma_M^\pm , \ N(\theta_\pm)=\theta_\pm\right\}
\end{equation}
on $\gamma_M^\pm$.
Observe that prime elements must lie in 
$\mathrm{PSL}(2,\mathbb{Z})$. 
The property of being prime is therefore equivalent 
to requiring that $M$ is not a non-trivial power of 
some other hyperbolic element in $\mathrm{PSL}(2,\mathbb{Z})$.
(This is the more usual definition when considering the action 
on $\mathbb{H}^+$ of the group $\mathrm{PSL}(2,\mathbb{Z})$, cf.~\cite{Sarnak1982}).
\begin{comment}
If $M=N^p$ where $N$ is hyperbolic, then $\mathrm{Fix}(N)=\mathrm{Fix}(M)$.
(If not, $M$ would have more than two real fixed-points.)
\end{comment}
Consider the real one-parameter subgroup of $\mathrm{PGL}(2,\mathbb{R})$ given by
\begin{equation}
\mathfrak{S}_\mathbb{R}
=
\left\{N\in\mathrm{PSL}(2,\mathbb{R}) : N(\gamma_M^\pm)=\gamma_M^\pm , \ N(\theta_\pm)=\theta_\pm\right\}
\end{equation}
Since the action of $\mathrm{PGL}(2,\mathbb{Z})$ is discrete, 
we find that 
$\mathfrak{S}_{\mathbb{Z}}=\mathfrak{S}_{\mathbb{R}}\cap\mathrm{PGL}(2,\mathbb{Z})$ 
also acts on $\gamma_M$ discretely and, in fact, is an infinite cyclic subgroup. 
Hence the notion of a prime hyperbolic element is well-defined.
Moreover, 
given any hyperbolic element $M\in\mathrm{PGL}(2,\mathbb{Z})$
there will be exactly $2$ prime hyperbolic elements preserving $\gamma_M$, 
these two elements will be mutually inverse, and either of these $2$ elements generates $\mathfrak{S}_\mathbb{Z}$.
When $\gamma_M^\pm$ is oriented, we will take the prime hyperbolic element for which the orientation of $[p,M(p)]$ agrees with the orientation on $\gamma_M$.

A closed geodesic $\gamma$ on $\mathcal{M}_2$ 
is {\it prime} if there does not exist another 
geodesic which traces out the same set of points as $\gamma$ on $\mathcal{M}_2$, 
and of strictly smaller length.
Observing that if $M$ is prime hyperbolic the $[p,M(p)]$ and $[\bar{p},M(\bar{p})]$ 
descend to conjugate-related pairs of prime geodesics on $\mathcal{M}_2$, together with the argument from the preceding Section~\ref{subsubsect:hyp_elements}
gives us the following variant of another well-known result:
\begin{proposition}
Conjugacy classes 
$[M]_{\mathrm{PGL}(2,\mathbb{Z})}$ 
of prime hyperbolic elements $M$ in 
$\mathrm{PGL}(2,\mathbb{Z})$ 
are in bijective correspondence with conjugate-related 
pairs of prime oriented closed geodesics on 
$\mathcal{M}_2$.
\end{proposition}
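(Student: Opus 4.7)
The plan is to show that the bijection from the preceding proposition restricts to a bijection on the subset of prime conjugacy classes on one side and (conjugate-related pairs of) prime oriented closed geodesics on the other. The proof splits into three observations.

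First, I would verify that primality of a hyperbolic element is invariant under $\mathrm{PGL}(2,\mathbb{Z})$-conjugation. If $M$ is prime, so that $M$ generates the infinite cyclic stabilizer $\mathfrak{S}_\mathbb{Z}$ of $\gamma_M^\pm$ fixing $\theta_\pm$, then for any $U\in\mathrm{PGL}(2,\mathbb{Z})$ the element $UMU^{-1}$ generates the corresponding stabilizer of $U(\gamma_M^\pm)$. Equivalently, if $M=N^k$ with $k\geq 2$ and $N$ hyperbolic in $\mathrm{PSL}(2,\mathbb{Z})$, then $UMU^{-1}=(UNU^{-1})^k$. Hence each conjugacy class consists either entirely of prime elements or entirely of non-prime elements, and the map of the previous proposition restricts well to prime classes.

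Second, I would show that the image of a prime class is a conjugate-related pair of prime oriented closed geodesics. Fix $p\in\gamma_M^+$; by primality, the oriented segment $[p,M(p)]$ is a fundamental domain for $\mathfrak{S}_\mathbb{Z}$ acting on $\gamma_M^+$, and its projection to $\mathcal{M}_2^+$ is a closed oriented geodesic $\gamma^+$. If $\gamma^+$ were not prime, there would exist another closed geodesic on $\mathcal{M}_2^+$ tracing the same point-set but of strictly smaller length; lifting, this produces a hyperbolic element $N\in\mathrm{PSL}(2,\mathbb{Z})$ preserving $\gamma_M^+$, fixing $\theta_\pm$, and with translation length strictly less than that of $M$. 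Such $N$ lies in $\mathfrak{S}_\mathbb{Z}$, contradicting the fact that $M$ is a generator. The argument for the conjugate-related geodesic $\gamma^-$ follows by applying complex conjugation.

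Conversely, if $M$ is hyperbolic but not prime, write $M=N^k$ with $N$ prime hyperbolic and $k\geq 2$. Since $\gamma_N=\gamma_M$, the closed geodesic associated with $N$ covers the same point-set on $\mathcal{M}_2$ as that associated with $M$ but is $k$ times shorter, so the geodesic of $M$ is not prime. Combined with the preceding bijection, this gives the claimed bijective correspondence.

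The main obstacle is the translation between the algebraic statement that $M$ generates the infinite cyclic group $\mathfrak{S}_\mathbb{Z}$ and the geometric statement that the projected closed geodesic cannot be obtained as an iterate of a shorter closed geodesic on $\mathcal{M}_2$. This rests on the fact that $\mathfrak{S}_\mathbb{Z}=\mathfrak{S}_\mathbb{R}\cap\mathrm{PGL}(2,\mathbb{Z})$ sits as a discrete subgroup of the one-parameter group $\mathfrak{S}_\mathbb{R}$ of hyperbolic translations along $\gamma_M$, and therefore its translation lengths form a discrete additive subgroup of $\mathbb{R}$ with a unique positive generator, which is precisely the length of the associated prime closed geodesic.
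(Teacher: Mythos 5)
Your proposal is correct and follows essentially the same route as the paper: restrict the bijection of the preceding proposition to prime classes, using the fact that $\mathfrak{S}_\mathbb{Z}$ is an infinite cyclic discrete subgroup of $\mathfrak{S}_\mathbb{R}$ so that the fundamental segment $[p,M(p)]$ of a prime element descends precisely to a prime closed geodesic. You simply make explicit the conjugation-invariance of primality and the lifting argument that the paper leaves implicit in its appeal to ``the argument from the preceding Section.''
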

Consider Remark~\eqref{rmk:hyp_conjugate_to_diagonal} above. 
Given a prime hyperbolic element $M$ define the {\it norm} of $M$ by
$\mathrm{n}(M)=t^2$
Observe that this can be computed explicitly from the trace $\trace (M)=t+t^{-1}$, and hence only depend on the conjugacy class of $M$.
Recalling that the hyperbolic distance between the points $\imath t_1$ and $\imath t_2$ on the imaginary axis is given by $|\log (t_2/t_1)|$,
we get the following, also well-known, result:
\begin{corollary}\label{cor:prime-length_of_geodesics} 
Let $\gamma^-$ and $\gamma^+$ be a conjugate-related pair of prime oriented closed geodesics in $\mathcal{M}_2$.
Let $[M]_{\mathrm{PGL}(2,\mathbb{Z})}$ denote the corresponding prime hyperbolic conjugacy class.
Then 
$
\mathrm{length}(\gamma^-)
=\mathrm{length}(\gamma^+)
=|\log n(M)|
$.
\end{corollary}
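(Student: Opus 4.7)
The plan is to reduce to a computation on the imaginary axis, using the conjugation described in Remark~\ref{rmk:hyp_conjugate_to_diagonal}. Since $M$ is prime hyperbolic, it lies in $\mathrm{PSL}(2,\mathbb{Z})$ (as noted in Section~\ref{subsubsect:prime_hyp}), so there exists $U\in\mathrm{PSL}(2,\mathbb{R})$ with $UMU^{-1}=\bigl[\begin{smallmatrix}t&0\\0&t^{-1}\end{smallmatrix}\bigr]$ for some real $t>1$. The linear fractional transformation $U$ is a hyperbolic isometry of $\mathbb{H}^+$, maps $\gamma_M^+$ to the positive imaginary half-axis $\imath(0,\infty)$ (since it sends the two fixed points $\theta_\pm$ to $0$ and $\imath\infty$), and intertwines $M$ with the transformation $w\mapsto t^2 w$ on $\imath(0,\infty)$.

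Next I would identify the fundamental domain for the cyclic group $\mathfrak{S}_\mathbb{Z}=\langle M\rangle$ acting on $\gamma_M^+$. By the definition of a prime hyperbolic element, for any $p\in \gamma_M^+$ the segment $[p,M(p)]\subset \gamma_M^+$ is such a fundamental domain. Applying the isometry $U$ and setting $q=U(p)=\imath s_0$ with $s_0>0$, the fundamental domain is transported to the segment $[\imath s_0,\imath t^2 s_0]$ on the positive imaginary axis. The length of the closed geodesic $\gamma^+$ on $\mathcal{M}_2$ is by definition the hyperbolic length of this fundamental domain. A direct computation with the Poincar\'e metric $(dx^2+dy^2)/y^2$ along $x=0$ gives
\begin{equation}
\mathrm{length}(\gamma^+)=\int_{s_0}^{t^2 s_0}\frac{ds}{s}=\log(t^2)=|\log n(M)|,
\end{equation}
where the absolute value handles the case $0<t<1$ (i.e., the inverse prime element).

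Finally, complex conjugation $z\mapsto \bar z$ is an orientation-reversing isometry between $\mathbb{H}^+$ and $\mathbb{H}^-$ (both carry the same Poincar\'e metric), and it descends to the complex-conjugation isomorphism between $\mathcal{M}_2^+$ and $\mathcal{M}_2^-$ that interchanges the conjugate-related pair $\gamma^+$ and $\gamma^-$. Being an isometry, it preserves lengths, so $\mathrm{length}(\gamma^-)=\mathrm{length}(\gamma^+)=|\log n(M)|$. The only potentially delicate point is to ensure that the fundamental domain chosen upstairs in $\gamma_M^+$ really maps bijectively onto one full traversal of the closed geodesic $\gamma^+$ on $\mathcal{M}_2$; this is precisely what primality of $M$ guarantees, since $\mathfrak{S}_\mathbb{Z}$ is exactly the stabiliser of $\gamma_M^+$ in $\mathrm{PSL}(2,\mathbb{Z})$ (with fixed endpoints), and is generated by the prime element $M$.
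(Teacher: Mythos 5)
Your proof is correct and follows essentially the same route as the paper: conjugating $M$ to the diagonal form of Remark~\ref{rmk:hyp_conjugate_to_diagonal} via an isometry, transporting the fundamental domain $[p,M(p)]$ to a segment $[\imath s_0,\imath t^2 s_0]$ on the imaginary axis, and invoking the formula $|\log(t_2/t_1)|$ for hyperbolic distance there. You simply spell out the details (the role of primality in identifying one full traversal, and the conjugation isometry between $\gamma^+$ and $\gamma^-$) that the paper leaves implicit in the discussion preceding the corollary.
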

\subsubsection{Prime hyperbolic elements and algebraic real numbers of degree two}\label{subsubsect:hyp_elements+quad_irr}
We now recall that prime hyperbolic 
elements are in bijective correspondence with Galois conjugate 
pairs of algebraic real numbers of degree two. 
%
Namely, given any hyperbolic element 
$M=\left[\begin{array}{cc}a&b\\c&d\end{array}\right]$ in $\mathrm{PGL}(2,\mathbb{Z})$, 
the fixed point equation 
for $M$ 
may be rearranged to give a degree-two polynomial with integer coefficients with positive discriminant.
Therefore the fixed points of $M$ form a Galois conjugate pair of algebraic reals of degree two.
(In the degenerate case when $c=0$ we take one of the pair to be infinity.)

In the opposite direction, 
degree-two algebraic real numbers are in one-to-one correspondence with pre-periodic simple continued fraction expansions.
Let $\theta$ be a degree-two algebraic real number with Galois conjugate $\theta'$.
The case when $\theta$ has strict degree one 
(and hence is rational, with finite continued fraction expansion) is straightforward.
Therefore assume that $\theta$ has strict degree two with
simple continued fraction expansion
\begin{equation}
[a_1,a_2,\ldots,a_\kay,a_{\kay+1},\ldots,a_{\kay+\ell}]
\end{equation}
Define
\begin{equation}
N_0=E(a_1)E(a_2)\cdots E(a_{\kay}) \qquad
N_1=E(a_{\kay+1})E(a_{\kay+2})\cdots E(a_{\kay+\ell})
\end{equation}
where,
for each $a\in\mathbb{N}$, 
$E(a)=\left[\begin{array}{cc}0&1\\1&a\end{array}\right]$.
(These agree with the matrices defined by equation~\eqref{eq:a1a2...al_1} in Section~\ref{subsubsect:Quad_Irr}.)
Let 
$N=N_\theta=N_0N_1 N_0^{-1}$.
By considering the trace or otherwise we find that $N_1$, and hence $N$, 
is hyperbolic. 
Moreover, the hyperbolic element $N_1$, and hence $N$, is prime if and only if 
$\ell$ is even and either 
\begin{itemize}
\item[(a)]
is the minimal period of  
$a_{\kay+1},a_{\kay+2},\ldots,a_{\kay+\ell}$, or
\item[(b)] 
is twice the minimal period of 
$a_{\kay+1},a_{\kay+2},\ldots,a_{\kay+\ell}$, which is odd.
\end{itemize}
Next, observe that
$N(\theta)=\theta$. 
Thus
$\theta$ is a solution of a quadratic equation $Q_N(z)=0$, 
where $Q_N$ has coefficients in $\mathbb{Z}$. 
Since $\theta$ is real,  
$\discr(\chi_N)=\discr(\chi_{N_1})\geq 0$, where $\chi_N$ denotes the characteristic polynomial of $N$.
Since we already know that $\theta$ has degree $2$ minimal polynomial (and hence $N_1$ is non-degenerate) 
we must have 
$Q_N(z)=\kappa\cdot\omega_\theta(z)$, 
for some 
$\kappa\in\mathbb{Q}$. 
Thus, $N$ must also fix the Galois conjugate $\theta'$ of $\theta$.
Hence to each Galois conjugate pair of degree two algebraic real numbers $\theta$ and $\theta'$, 
there exists $N\in\mathrm{PGL}(2,\mathbb{Z})$ which is prime and fixes $\theta$ and $\theta'$.
%
%

\subsection{Dynamical zeta functions}\label{subsect:dyn_zeta}
Our aim in this section is to consider the 
dynamical zeta function $\zeta_f$ of the 
hyperbolic toral automorphism $f$.
Recall that the dynamical zeta function of 
a general discrete-time dynamical system $f$ with weight $g$ is given by
\begin{equation}
\zeta_{f,g}(z)
=\exp\left(
\sum_{n\geq 1}\frac{z^n}{n}
\sum_{x\in\mathrm{Fix} f^n}\prod_{m=1}^{n}g(f^mx)
\right)
\end{equation}
When the weight is positive the radius of convergence $\rho$ is given by 
$\rho=\exp(-P(\log g))$ 
where $P(\log g)$ is the {\it pressure} of the potential $\log g$ given by
\begin{equation}
P(\log g)
=\limsup_{n\to\infty}\frac{1}{n}\log\left(
\sum_{x\in\mathrm{Fix} f^n}\prod_{m=1}^{n}g(f^mx)
\right)
\end{equation}
For the weight $g\equiv 1$ we get the Artin-Mazur dynamical zeta function, which we denote by $\zeta_f$.
Then by the Cauchy-Hadamard theorem 
\begin{equation}
1/\rho
=\limsup_{n\to\infty}|\card\mathrm{Fix}(f^n)|^{1/n}
\end{equation}
By considering the Taylor expansion of $\log(1-z^p)$ the dynamical zeta function of weight $g$ can be expressed as an Euler product
\begin{equation}\label{eq:zeta-map-g_arb-euler_form}
\zeta_{f,g}(z)
=\prod_{\varpi \ \mbox{\tiny prime}} 
\left(1-z^{\mathrm{per}(\varpi)}\prod_{m=0,\ldots,\mathrm{per}(\varpi)-1}g(f^m x(\varpi))\right)^{-1}
\end{equation}
where the product is taken over all prime periodic orbits $\varpi$ of $f$, 
$\mathrm{per}(\varpi)$ denotes the prime period of $\varpi$ and 
$x(\varpi)$ denotes an arbitrary element of the orbit $\varpi$.
For $g\equiv 1$ reduces to
\begin{equation}\label{eq:zeta-map-g=1-euler_form}
\zeta_{f,g}(z)
=\prod_{\varpi \ \mbox{\tiny prime}}
\left(1-z^{\mathrm{per}(\varpi)}\right)^{-1}
\end{equation}
When considering, instead, a continuous-time dynamical system $f$ with weight $g$ the dynamical zeta function is given by
\begin{equation}
\zeta_{f,g}(s)=\prod_{\varpi}\left(1-\exp\left[-s\int_0^{\mathrm{per}(\varpi)} g\left(f^t x(\varpi)\right) \, dt\right]\right)^{-1}
\end{equation}
where, as before, the product is taken over all prime periodic orbits $\varpi$ of $f$, 
$\mathrm{per}(\varpi)$ denotes the prime period of $\varpi$ and 
$x(\varpi)$ denotes an (arbitrarily chosen) point of $\varpi$.
The (formal) equivalence between this expression and the resulting time-one map is via an Euler product formula-type argument.
In the special case when $g\equiv 1$ the above expression reduces to
\begin{equation}\label{eq:zeta-flow-g=1}
\zeta_{f,g}(s)=\prod_{\varpi}\left(1-\exp\left[-s\cdot \mathrm{per}(\varpi)\right]\right)^{-1}
\end{equation}

\subsection{Hyperbolic toral automorphisms.}\label{subsect:entropy+toral_automorphism}
First, let us recall the following facts concerning general toral automorphisms.
That is,
maps of the form
\begin{equation}
f\colon \mathbb{R}^d/\mathbb{Z}^d\to\mathbb{R}^d/\mathbb{Z}^d \qquad
f(x+\mathbb{Z}^d)=M_fx+\mathbb{Z}^d
\end{equation}
where $M_f\colon\mathbb{R}^d\to\mathbb{R}^d$ is a linear map with integer entries satisfying
$\det(M_f)=\pm 1$.
It is known that 
$f$ is ergodic if and only if $\spec(M_f)$ 
does not contain any roots of unity~\cite[p.31]{WaltersBook}.
Since, for arbitrary endomorphisms of compact groups, 
ergodicity is equivalent to weak mixing is equivalent to strong mixing~\cite[p.50]{WaltersBook},
we also find that $f$ is strong mixing provided 
$\spec(M_f)$ doesn't contain any roots of unity.
Finally, $f$ is expansive if and only if it is hyperbolic, 
{\it i.e.}, 
$\spec(M_f)$ is disjoint from the unit circle~\cite[p.143]{WaltersBook}.
(However, expansivity may be lost when taking finite-to-one quotients~\cite[Example 1, p.140]{WaltersBook}.)

Observe that, for each integer $n$, 
$\mathrm{Fix}(f^n)$ is a compact subgroup of $\mathbb{T}^d$.
In the case when $\spec(M_f)$ does not contain $1$, 
the fixed points of all iterates are isolated and we have the following equality
\begin{equation}\label{eq:Fixfn=|det|}
\card\mathrm{Fix}(f^n)=\left|\det (\mathrm{id}-M_f^n)\right|
\end{equation}
This can be seen by counting $\mathbb{Z}^d$-lattice points contained 
in $M_f([0,1)^d)$, 
where $[0,1)^d$ is some fundamental domain in $\mathbb{R}^d$.
(When $\spec(M_f)$ contains $1$, 
$\mathrm{Fix}(f^n)$ is isomorphic to $\mathbb{T}^e$ 
for some positive integer $e\leq d$, 
and thus fixed points are never isolated.)
\begin{remark}
If $f$ possesses a Markov partition, and the corresponding transition matrix is given by $M$ then,
the equality below follows from the corresponding equality for the induced subshift of finite type $\sigma_f$:
\begin{equation}\label{eq:HypTorAut:Fix_pts-Trace}
\card \mathrm{Fix}(\sigma_f^n)=\trace(M^n)
\end{equation}
However, the encoding of orbits in this way can lead to a 
miscount of the number of periodic points that lie in the boundaries of the Markov rectangles.
\end{remark}
\noindent
Bowen~\cite[Corollary 16]{Bowen1971} showed that for a general toral automorphism $f$
\begin{equation}
h_\mathrm{top}(f)=\sum_{\lambda\in\mathrm{spec}(M_f):|\lambda|>1} \log |\lambda|
\end{equation}
(See also~\cite{Bowen1978b} or~\cite[Theorem 8.15]{WaltersBook}.)
When $d=2$, this reduces to the equality
\begin{equation}
h_\mathrm{top}(f)=\log \specrad(M_f)
\end{equation}
\begin{comment}
By a result of Bowen~\cite{Bowen1970b} 
the topological entropy of $f$ satisfies
\begin{equation}
h_{\mathrm{top}}(f)
=\limsup_{n\to\infty}\frac{1}{n}\log \card \mathrm{Fix}(f^n)
\end{equation}
(Bowen actually shows this equality holds more generally when
$M$ is an arbitrary compact manifold,
and $f\in\mathrm{Diff}^1(M)$ has (uniformly) hyperbolic non-wandering set.)
\end{comment}
When the toral automorphism $f$ is hyperbolic, and consequently
\footnote{Recall that a diffeomorphism is {\it Axiom A} if the nonwandering set is hyperbolic and the set of periodic points is 
dense in the non-wandering set.} 
Axiom A, another result of 
Bowen~\cite[Theorem 4.9]{Bowen1970b} implies that
\begin{equation}
h_\mathrm{top}(f)=\limsup_{n\to\infty}\frac{1}{n}\log \#\mathrm{Fix}(f^n)
\end{equation}
Consequently, if we denote by $\rho$ the radius of convergence of the Artin-Mazur dynamical zeta function $\zeta_f$ of 
the hyperbolic toral automorphism $f$, then
\begin{equation}
\rho
=\exp(-h_{\mathrm{top}}(f))
\end{equation}
%
%
\begin{comment}
\noindent
{\it Proof:\/}
We give a sketch in the case when $d=2$, 
$A=\left[\begin{array}{cc}1&1\\ 1&0\end{array}\right]$.
First, construct a Markov partition. 
This sets up a conjugacy (via itinerary) between $F$ and the subshift of finite type with transition matrix $A$. 
Let $\sigma$ denote the shift map for the subshift.
Here is a sketch of why the cardinality of $\mathrm{Fix}(\sigma^n)$ is the $n$th Fibonacci number $q_n$.

Recall, $\sigma$ the shift, restricted to 
\begin{equation}
\Sigma=\left\{(w_i)\in \{a,b\}^\mathbb{Z} : \ \mbox{word} \ bb \ \mbox{is forbidden} \right\}
\end{equation} 
({\it i.e.\/}, $\sigma$ is the subshift of finite type associated to the matrix $A$). 
Let $G(A)$ denote the directed graph with adjacency matrix $A$.

Fact 1: 
The number of solutions to $\sigma^n(x)=x$ equals the number of circuits of length $n$ that can be made in $G(A)$

Fact 2: 
The number of circuits of length $n$ that can be made on the graph $G(A)$ equals the number of circuits of length $1$ on the graph $G(A^n)$

Fact 3: 
For a general matrix $A'$ (whose entries are non-negative integers) 
the number of circuits of length $1$ of a graph $G(A')$ equals $\mathrm{tr}(A')$, the trace of $A'$

Fact 4: 
$A^n=\left[\begin{array}{cc}q_{n+1}&q_n\\q_n&q_{n-1}\end{array}\right]$ ,             [Hint: induction]
/\!/

\vspace{5pt}

\end{comment}
%
%
%
%
%
%
%
\section{Generating functions.}\label{sect:Gen_fn}
\subsection{Generating functions and the Gauss transformation.}\label{subsect:Gen_fn-Gauss_map}
Given $\theta\in[0,1]\setminus\mathbb{Q}$, denote the $n$th convergent of $\theta$ by $p_n/q_n$.
Given non-negative integers $r$ and $s$, consider the generating function
\begin{equation}\label{eq:F+G-defn}
F_{\pq{r}{s}}(z)=\sum_{n\geq 0}(p_n)^r(q_n)^s z^n 
\end{equation}
\begin{comment}
The radius of convergence of $F_{p^r q^s}$ is 
given (via the Cauchy-Hadamard Theorem) by
\begin{equation}
\rho_{F_{p^r q^s}}=\frac{1}{\limsup_{n\to\infty}|p_n|^{r/n}|q_n|^{s/n}} 
\end{equation}
\end{comment}
%
(Note that we will use parentheses when taking powers of $p_n$ and $q_n$ wherever possible, to avoid confusion with our $\caret$-notation introduced earlier.)
We wish to understand how $F_{\pq{r}{s}}$ behaves under the action of the Gauss transformation.
The following is a Corollary to Theorem~\ref{thm:p/q-action_under_Gauss_map}.
\begin{corollary}\label{cor:Gen_fn-action_under_Gauss_map}
For each $r\in\mathbb{N}$ the following holds.
Given 
$\theta\in [0,1]\setminus \mathbb{Q}$ 
let 
$\hat{\theta}=\mathrm{T}(\theta)$.
Let 
$p_n/q_n$ and $\hat{p}_n/\hat{q}_n$ 
denote the $n$th convergents for 
$\theta$ and $\hat\theta$ 
respectively. 
For each integer $s$ satisfying $0\leq s\leq r$, let
\begin{equation}\label{eq:F+hatF-defn}
F_{\pq{r-s}{s}}(z)=\sum_{n\geq 0}p_n^{r-s}q_n^s z^n \qquad
\hat{F}_{\pq{r-s}{s}}(z)=\sum_{n\geq 0}\hat{p}_n^{r-s}\hat{q}_n^{s}z^n
\end{equation}
Then
\begin{align}\label{eq:Gen_fn-action_under_Gauss_map} 
\left[\begin{array}{c}F_{\pq{r}{0}}(z)\\ F_{\pq{r-1}{1}}(z)\\ \vdots\\ F_{\pq{0}{r}}(z)\end{array}\right]
=
\left[\begin{array}{c}(p_0)^r(q_0)^0\\(p_0)^{r-1}(q_0)^1\\ \vdots\\ (p_0)^{0}(q_0)^{r}\end{array}\right]
+
z
E(a_1;r)
\left[\begin{array}{c}\hat{F}_{\pq{r}{0}}(z)\\ \hat{F}_{\pq{r-1}{1}}(z)\\ \vdots\\ \hat{F}_{\pq{0}{r}}(z)\end{array}\right]
\end{align}
\end{corollary}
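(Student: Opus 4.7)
The plan is to apply the preceding Corollary (the one providing the matrix $E(a_1;r)$) termwise inside each generating function. First I would split every series $F_{\pq{r-s}{s}}(z)=\sum_{n\geq 0}(p_n)^{r-s}(q_n)^s z^n$ into its $n=0$ contribution and its tail $\sum_{n\geq 1}$. Assembling the $r+1$ functions into a single column vector, the $n=0$ terms contribute exactly the initial column $[(p_0)^r(q_0)^0,(p_0)^{r-1}(q_0)^1,\ldots,(p_0)^0(q_0)^r]^\top$ appearing on the right-hand side of \eqref{eq:Gen_fn-action_under_Gauss_map}.

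For the tail, the preceding Corollary asserts that for every $n\geq 1$ the column of monomials in $p_n,q_n$ of total degree $r$ is obtained from the corresponding column of monomials in $\hat p_{n-1},\hat q_{n-1}$ by applying $E(a_1;r)$. Since this matrix is independent of $n$, I would pull it outside the sum to get
\begin{equation*}
\sum_{n\geq 1}z^n\left[\begin{array}{c}(p_n)^r(q_n)^0\\ \vdots\\ (p_n)^0(q_n)^r\end{array}\right]
=E(a_1;r)\sum_{n\geq 1}z^n\left[\begin{array}{c}(\hat p_{n-1})^r(\hat q_{n-1})^0\\ \vdots\\ (\hat p_{n-1})^0(\hat q_{n-1})^r\end{array}\right].
\end{equation*}
Reindexing $m=n-1$ and extracting a factor of $z$ turns the right-hand sum into $z$ times the column whose entries are $\hat F_{\pq{r-s}{s}}(z)=\sum_{m\geq 0}(\hat p_m)^{r-s}(\hat q_m)^{s}z^m$. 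Adding back the $n=0$ contribution yields precisely the claimed identity \eqref{eq:Gen_fn-action_under_Gauss_map}.

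The only genuine obstacle is bookkeeping: one must be careful that $(p_n,q_n)$ is tied to $(\hat p_{n-1},\hat q_{n-1})$ rather than $(\hat p_n,\hat q_n)$, so the shift in index is correctly compensated by the explicit factor $z$ outside the vector $[\hat F_{\pq{r}{0}},\ldots,\hat F_{\pq{0}{r}}]^\top$, and that the $n=0$ term is kept separate because the termwise identity from the preceding Corollary was established only for $n\geq 1$. Once these points are handled, the result follows by linearity of the geometric series, with no further computation needed beyond what is already packaged into $E(a_1;r)$.
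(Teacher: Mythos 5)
Your proposal is correct and follows essentially the same route as the paper's own argument: split off the $n=0$ term, apply the preceding corollary (the $E(a_1;r)$ identity, valid for $n\geq 1$) termwise, pull the constant matrix out of the sum, and reindex to absorb the shift into the explicit factor of $z$. The bookkeeping points you flag are exactly the ones that matter, and they are handled correctly.
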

%
\begin{comment}
\begin{proof}
Applying~\eqref{eq:F+hatF-defn} twice, 
together with Theorem~\ref{thm:p/q-action_under_Gauss_map}, gives
\begin{align}
&
\left[\begin{array}{c}F_{p^{r}q^{0}}(z)\\ F_{p^{r-1}q^{1}}(z)\\ \vdots\\ F_{p^{0}q^{r}}(z)\end{array}\right]
=
\left[\begin{array}{c}(p_0)^r(q_0)^0\\(p_0)^{r-1}(q_0)^1\\ \vdots\\ (p_0)^{0}(q_0)^{r}\end{array}\right]
+
\sum_{n\geq 1} z^n
\left[\begin{array}{c}(p_n)^r(q_n)^0\\(p_n)^{r-1}(q_n)^1\\ \vdots\\ (p_n)^{0}(q_n)^{r}\end{array}\right]\\
&=
\left[\begin{array}{c}(p_0)^r(q_0)^0\\(p_0)^{r-1}(q_0)^1\\ \vdots\\ (p_0)^{0}(q_0)^{r}\end{array}\right]
+
\sum_{n\geq 1} z^n
E(a_1;r)
\left[\begin{array}{c}(\hat{p}_{n-1})^r(\hat{q}_{n-1})^0\\(\hat{p}_{n-1})^{r-1}(\hat{q}_{n-1})^1\\ \vdots\\ (\hat{p}_{n-1})^{0}(\hat{q}_{n-1})^{r}\end{array}\right]\\
&=
\left[\begin{array}{c}(p_0)^r(q_0)^0\\(p_0)^{r-1}(q_0)^1\\ \vdots\\ (p_0)^{0}(q_0)^{r}\end{array}\right]
+zE(a_1;r)
\left[\begin{array}{c}\hat{F}_{p^{r}q^{0}}(z)\\ \hat{F}_{p^{r-1}q^{1}}(z)\\ \vdots\\ \hat{F}_{p^{0}q^{r}}(z)\end{array}\right]
\end{align}
\end{proof}
\end{comment}
%
We now consider iterating the action of the Gauss transformation.
Let us adopt the notation from Section~\ref{subsubsect:Gauss_Map}.
For each non-negative integer $n$, let 
$\theta^{\caret n}=\mathrm{T}^n(\theta)$ 
and take 
$a^{\caret n}_m$, 
$p^{\caret n}_m$, $q^{\caret n}_m$, etc., as before.
For each positive integer $m$ define
\begin{equation}
F_{\pq{r}{s}}^{\caret m}(z)=\sum_{n\geq 0}(p_n^{\caret m})^r(q_n^{\caret m})^s z^n 
\end{equation}
Applying 
Corollary~\ref{cor:Gen_fn-action_under_Gauss_map} iteratively, 
together with Corollary~\ref{cor:pn/qn-action_under_Gauss_map}, 
we get the following.
\begin{corollary}\label{cor:Gen_fn-action_under_Gauss_map_2}
For each $r\in\mathbb{N}$ the following holds.
Given $\theta\in[0,1]\setminus \mathbb{Q}$ let $\theta^{\caret m}=\mathrm{T}^m(\theta)$ for each positive integer $m$. 
For each integer $s$ satisfying $0\leq s\leq r$, let
$F_{\pq{r-s}{s}}$ and $F_{\pq{r-s}{s}}^{\caret m}$ 
denote the generating functions defined above corresponding to $\theta$ and $\theta^{\caret m}$ respectively.
Then, for each positive integer $m$,
\begin{align*}
\left[\begin{array}{c}F_{\pq{r}{0}}(z)\\ F_{\pq{r-1}{1}}(z)\\ \vdots\\ F_{\pq{0}{r}}(z)\end{array}\right]
&=
\!
\sum_{0\leq n<m}z^n\left[\begin{array}{c}(p_n)^r(q_n)^0\\ (p_n)^{r-1}(q_n)^1\\ \vdots\\ (p_n)^0(q_n)^r\end{array}\right]
+
z^m E
\left[\begin{array}{c}F^{\caret m}_{\pq{r}{0}}(z)\\ F^{\caret m}_{\pq{r-1}{1}}(z)\\ \vdots\\ F^{\caret m}_{\pq{0}{r}}(z)\end{array}\right]
\end{align*}
where 
$E=E(a_1;r)E(a_2;r)\cdots E(a_m;r)$.
\end{corollary}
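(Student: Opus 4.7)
The plan is to proceed by induction on $m$. The base case $m=1$ is exactly Corollary~\ref{cor:Gen_fn-action_under_Gauss_map} applied to $\theta$ itself, noting that $\theta^{\caret 1}=\mathrm{T}(\theta)$, so $\hat{F}_{\pq{r-s}{s}}$ in that corollary coincides with $F^{\caret 1}_{\pq{r-s}{s}}$ here, and $(p_0)^{r-s}(q_0)^s = (p_0)^{r-s}(q_0)^s$ gives the $n=0$ term of the finite sum.

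For the inductive step, assume the statement holds for some $m\geq 1$. Apply Corollary~\ref{cor:Gen_fn-action_under_Gauss_map} with $\theta$ replaced by $\theta^{\caret m}$; since $\mathrm{T}(\theta^{\caret m})=\theta^{\caret m+1}$ and $a^{\caret m}_1=a_{m+1}$, this yields
\begin{equation*}
\left[\begin{array}{c}F^{\caret m}_{\pq{r}{0}}(z)\\ \vdots\\ F^{\caret m}_{\pq{0}{r}}(z)\end{array}\right]
=
\left[\begin{array}{c}(p^{\caret m}_0)^r(q^{\caret m}_0)^0\\ \vdots\\ (p^{\caret m}_0)^0(q^{\caret m}_0)^r\end{array}\right]
+zE(a_{m+1};r)\left[\begin{array}{c}F^{\caret m+1}_{\pq{r}{0}}(z)\\ \vdots\\ F^{\caret m+1}_{\pq{0}{r}}(z)\end{array}\right].
\end{equation*}
Substituting this into the inductive hypothesis and distributing the factor $z^m E(a_1;r)\cdots E(a_m;r)$ splits the result into three pieces: the existing partial sum $\sum_{0\leq n<m} z^n[(p_n)^{r-s}(q_n)^s]_s$, a new term $z^m E(a_1;r)\cdots E(a_m;r)[(p^{\caret m}_0)^{r-s}(q^{\caret m}_0)^s]_s$, and the desired tail $z^{m+1}E(a_1;r)\cdots E(a_{m+1};r)[F^{\caret m+1}_{\pq{r-s}{s}}(z)]_s$.

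The only identity that still needs verification is that the new isolated term reconstitutes the $n=m$ summand, i.e.
\begin{equation*}
E(a_1;r)\cdots E(a_m;r)\left[\begin{array}{c}(p^{\caret m}_0)^r(q^{\caret m}_0)^0\\ \vdots\\ (p^{\caret m}_0)^0(q^{\caret m}_0)^r\end{array}\right]
=
\left[\begin{array}{c}(p_m)^r(q_m)^0\\ \vdots\\ (p_m)^0(q_m)^r\end{array}\right].
\end{equation*}
This is precisely Corollary~\ref{cor:pn/qn-action_under_Gauss_map} with the roles of $m$ and $n$ taken to be $0$ and $m$ respectively, so $E=E(a_1;r)\cdots E(a_m;r)$ and the left endpoint is $\theta^{\caret 0}=\theta$, the right endpoint $\theta^{\caret m}$. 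With this identification the partial sum now runs over $0\leq n<m+1$, completing the induction.

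The argument is essentially pure bookkeeping; there is no real obstacle, only the need to line up indices carefully. The mild subtlety is ensuring that in the iterated application the factor $E$ accumulates on the correct side and in the correct order (reading left to right as $E(a_1;r)E(a_2;r)\cdots$), which is forced by the fact that each application of Corollary~\ref{cor:Gen_fn-action_under_Gauss_map} multiplies on the left by $E(a^{\caret j}_1;r)=E(a_{j+1};r)$ acting on the generating function vector for $\theta^{\caret j+1}$.
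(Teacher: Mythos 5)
Your induction is correct and is precisely the argument the paper intends: the paper derives this corollary by ``applying Corollary~\ref{cor:Gen_fn-action_under_Gauss_map} iteratively, together with Corollary~\ref{cor:pn/qn-action_under_Gauss_map}'', which is exactly your inductive step (one-step shift relation plus the identification $E(a_1;r)\cdots E(a_m;r)\,e = [(p_m)^{r-s}(q_m)^s]_s$ via Corollary~\ref{cor:pn/qn-action_under_Gauss_map} with indices $0$ and $m$). The index bookkeeping, including the left-to-right accumulation of the factors $E(a_{j+1};r)$, is handled correctly.
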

\begin{remark}\label{rmk:Gen_fn-action_under_Gauss_map_2}
In particular, for $r=1$ if we adopt the alternative notation 
$F_p=F_{\pq{1}{0}}$ and $F_q=F_{\pq{0}{1}}$ then we get the following
\begin{align*}
\left[\begin{array}{l}F_{p}(z)\\ F_{q}(z)\end{array}\right]
=
\!
\sum_{0\leq n<m}z^n\left[\begin{array}{l}p_n\\ q_n\end{array}\right]
+
z^m
\left[\begin{array}{ll}0&1\\ 1&a_1\end{array}\right]
\left[\begin{array}{ll}0&1\\ 1&a_2\end{array}\right]
\cdots
\left[\begin{array}{ll}0&1\\ 1&a_m\end{array}\right]
\left[\begin{array}{l}F^{\caret m}_{p}(z)\\ F^{\caret m}_{q}(z)\end{array}\right]
\end{align*}
\end{remark}
%
%
%
%
%
\subsection{Generating functions associated to quadratic irrationals.}\label{subsect:gen_fn-quad_irrational}
We now focus on the case when 
$\theta\in[0,1]\setminus\mathbb{Q}$ 
is a quadratic irrational.
Thus, for some positive integers $a_1,a_2,\ldots,a_{\kay+\ell}\in\mathbb{N}$,
\begin{equation}\label{eq:ctd_frac-eventually_periodic}
\theta=\left[a_1,a_2,\ldots,a_\kay,\overline{a_{\kay+1},\ldots,a_{\kay+\ell}}\right]
\end{equation}
As above, let $p_n/q_n$ denote the sequence of convergents of $\theta$ 
and 
take the corresponding generating functions $F_{\pq{r}{s}}$ defined by~\eqref{eq:F+G-defn}, as before.
The aim of this section is to prove the following generalisation of Theorem~\ref{thm:gen_fn_rat}.
\begin{theorem}\label{thm:gen_fn_rat->higher-powers}
For each $r\in\mathbb{N}$ the following holds.
Let 
$\theta\in[0,1]\setminus\mathbb{Q}$ 
be a quadratic irrational with continued fraction expansion
\begin{equation}
\theta=[a_1,a_2,\ldots,a_\kay,\overline{a_{\kay+1},\ldots,a_{\kay+\ell}}]
\end{equation}
For each non-negative integer $n$, let $p_n/q_n$ denote the $n$th convergent of $\theta$.
Then the associated generating functions $F_{\pq{r-s}{s}}(z)$ are rational functions in the variable $z$, with integer coefficients.
In fact, if
\begin{equation}\label{eq:N_0+N_1-r_arb}
N_0=E(a_1;r)\cdots E(a_\kay;r) \qquad
N_1=E(a_{\kay+1};r)\cdots E(a_{\kay+\ell};r)
\end{equation}
and
\begin{equation}\label{eq:P_0+P_1-r_arb}
P_0(z)=\!
\sum_{0\leq n<\kay}z^n
\left[\begin{array}{c}
(p_n)^r(q_n)^0\\
(p_n)^{r-1}(q_n)^1\\
\vdots\\
(p_n)^0(q_n)^r
\end{array}\right] \quad
P_1(z)=\!
\sum_{\kay\leq n<\kay+\ell}z^{n}\left[\begin{array}{c}
(p_n)^r(q_n)^0\\
(p_n)^{r-1}(q_n)^1\\
\vdots\\
(p_n)^0(q_n)^r
\end{array}\right]
\end{equation}
then
\begin{align}\label{eq:gen_fn_rat-r-arb}
\left[\begin{array}{c}
F_{\pq{r}{0}}(z)\\
F_{\pq{r-1}{1}}(z)\\
\vdots\\
F_{\pq{0}{r}}(z)
\end{array}\right]
=
P_0(z)
+\left(\mathrm{id}-z^\ell N_0N_1N_0^{-1}\right)^{-1}P_1(z)
\end{align}
\end{theorem}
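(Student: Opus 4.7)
The plan is to apply Corollary~\ref{cor:Gen_fn-action_under_Gauss_map_2} twice, exploiting the preperiodic structure of the continued fraction expansion~\eqref{eq:ctd_frac-eventually_periodic}. First I would apply the corollary with $m=\kay$ to peel off the preperiodic part, obtaining
\begin{equation*}
\left[\begin{array}{c}F_{\pq{r}{0}}(z)\\ \vdots\\ F_{\pq{0}{r}}(z)\end{array}\right]
=P_0(z)+z^\kay N_0 \left[\begin{array}{c}F^{\caret\kay}_{\pq{r}{0}}(z)\\ \vdots\\ F^{\caret\kay}_{\pq{0}{r}}(z)\end{array}\right].
\end{equation*}
Since $\theta^{\caret\kay}$ has purely periodic simple continued fraction expansion of period $\ell$, we have $\theta^{\caret(\kay+\ell)}=\theta^{\caret\kay}$, so the corresponding generating functions $F^{\caret(\kay+\ell)}_{\pq{r-s}{s}}$ and $F^{\caret\kay}_{\pq{r-s}{s}}$ coincide. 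Applying Corollary~\ref{cor:Gen_fn-action_under_Gauss_map_2} once more, now with $m=\ell$ on the shifted vector, therefore produces a self-referential equation
\begin{equation*}
\left[\begin{array}{c}F^{\caret\kay}_{\pq{r}{0}}(z)\\ \vdots\\ F^{\caret\kay}_{\pq{0}{r}}(z)\end{array}\right]
=Q(z)+z^\ell N_1\left[\begin{array}{c}F^{\caret\kay}_{\pq{r}{0}}(z)\\ \vdots\\ F^{\caret\kay}_{\pq{0}{r}}(z)\end{array}\right],
\end{equation*}
where $Q(z)=\sum_{0\le n<\ell}z^n\bigl((p^{\caret\kay}_n)^{r-s}(q^{\caret\kay}_n)^s\bigr)_{0\le s\le r}$ is a column vector of polynomials of degree less than $\ell$.

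Solving this system hinges on the invertibility of $\id-z^\ell N_1$ over $\mathbb{Z}(z)$, which holds because $\det(\id-z^\ell N_1)$ is a polynomial in $z$ with constant term $1$, hence non-vanishing. Substituting back yields the closed form $P_0(z)+z^\kay N_0(\id-z^\ell N_1)^{-1}Q(z)$. To reconcile this with~\eqref{eq:gen_fn_rat-r-arb} I would invoke the conjugation identity $N_0(\id-z^\ell N_1)^{-1}N_0^{-1}=(\id-z^\ell N_0N_1N_0^{-1})^{-1}$ together with the index-shift identity
\begin{equation*}
N_0\left[\begin{array}{c}(p^{\caret\kay}_{n-\kay})^{r-s}(q^{\caret\kay}_{n-\kay})^s\end{array}\right]_{0\le s\le r}=\left[\begin{array}{c}(p_n)^{r-s}(q_n)^s\end{array}\right]_{0\le s\le r}
\end{equation*}
valid for $\kay\le n<\kay+\ell$, which follows by iterating Theorem~\ref{thm:p/q-action_under_Gauss_map} (in its $r$-th power extension, as in Corollary~\ref{cor:pn/qn-action_under_Gauss_map}) exactly $\kay$ times. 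These two identities together convert $z^\kay N_0(\id-z^\ell N_1)^{-1}Q(z)$ into $(\id-z^\ell N_0N_1N_0^{-1})^{-1}P_1(z)$, recovering~\eqref{eq:gen_fn_rat-r-arb} exactly.

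Rationality of each $F_{\pq{r-s}{s}}$ is then immediate from the closed form. Integrality of the Taylor coefficients follows directly from the definition~\eqref{eq:F+G-defn}, while integrality of numerator and denominator of the closed form follows because each factor $E(a;r)$ has determinant $\pm 1$, so $N_0,N_0^{-1},N_1$ all lie in $\mathrm{GL}(r+1,\mathbb{Z})$ and hence so does $N_0N_1N_0^{-1}$. The main obstacle, though conceptually mild, is the bookkeeping involved in the index-shift identity: one must verify carefully that the factors $E(a_j;r)$ produced by iterating Theorem~\ref{thm:p/q-action_under_Gauss_map} appear in precisely the order dictated by the definition of $N_0$ in~\eqref{eq:N_0+N_1-r_arb}. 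Once this is secured, the remainder is routine linear algebra in $\mathbb{Z}(z)$.
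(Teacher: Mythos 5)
Your proposal is correct and follows essentially the same route as the paper: both rest on the identity $F^{\caret\kay}_{\pq{r-s}{s}}=F^{\caret(\kay+\ell)}_{\pq{r-s}{s}}$, a two-fold application of Corollary~\ref{cor:Gen_fn-action_under_Gauss_map_2} to produce the linear equation $(\mathrm{id}-z^\ell N_1)$ applied to the shifted generating-function vector, inversion over $\mathbb{Z}(z)$, and back-substitution. The only cosmetic difference is that the paper derives the self-referential equation by comparing the $m=\kay$ and $m=\kay+\ell$ expansions of the unshifted vector, whereas you expand the shifted vector directly with $m=\ell$; these are trivially equivalent.
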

\begin{proof}
Let $\theta$ have continued fraction expansion given by expression~\eqref{eq:ctd_frac-eventually_periodic} above.
Then $\theta^{\caret \kay}=\theta^{\caret \kay+\ell}$. 
Therefore 
$p^{\caret \kay}_n=p^{\caret \kay+\ell}_n$ and 
$q^{\caret \kay}_n=q^{\caret \kay+\ell}_n$, for all $n$.
Consequently, for each integer $s$ satisfying $0\leq s\leq r$,
\begin{equation}\label{eq:F^m+l_r-s,s=F^m_r-s,s}
F^{\caret \kay}_{\pq{r-s}{s}}(z)=F^{\caret \kay+\ell}_{\pq{r-s}{s}}(z)
\end{equation}
By Corollary~\ref{cor:Gen_fn-action_under_Gauss_map_2} we therefore get
\begin{align}\label{eq:F^m+l=F^m}
&
\left[\begin{array}{c}
F_{\pq{r}{0}}(z)\\
F_{\pq{r-1}{1}}(z)\\
\vdots\\
F_{\pq{0}{r}}(z)
\end{array}\right]
=
\sum_{0\leq n<\kay}z^n\left[\begin{array}{c}
(p_n)^r(q_n)^0\\
(p_n)^{r-1}(q_n)^1\\
\vdots\\
(p_n)^0(q_n)^r
\end{array}\right]
+z^\kay N_0
\left[\begin{array}{c}
F^{\caret \kay}_{\pq{r}{0}}(z)\\
F^{\caret \kay}_{\pq{r-1}{1}}(z)\\
\vdots\\
F^{\caret \kay}_{\pq{0}{r}}(z)
\end{array}\right]\\
&\qquad\quad \; =
\sum_{0\leq n<\kay+\ell}z^n\left[\begin{array}{c}(p_n)^r(q_n)^0\\ (p_n)^{r-1}(q_n)^1\\ \vdots\\ (p_n)^0(q_n)^r\end{array}\right]
+
z^{\kay+\ell} N_0 N_1
\left[\begin{array}{c}
F^{\caret \kay+\ell}_{\pq{r}{0}}(z)\\
F^{\caret \kay+\ell}_{\pq{r-1}{1}}(z)\\
\vdots\\
F^{\caret \kay+\ell}_{\pq{0}{r}}(z)
\end{array}\right]
\end{align}
where $N_0$ and $N_1$ are given by equations~\eqref{eq:N_0+N_1-r_arb}.
Upon rearranging the above equality and using 
the equality~\eqref{eq:F^m+l_r-s,s=F^m_r-s,s} above
we therefore find that
\begin{equation}
z^\kay N_0\left(\mathrm{id}-z^\ell N_1\right)
\left[\begin{array}{c}
F_{\pq{r}{0}}^{\caret\kay}(z)\\
F_{\pq{r-1}{1}}^{\caret\kay}(z)\\
\vdots\\
F_{\pq{0}{r}}^{\caret\kay}(z)
\end{array}\right]
=
\sum_{\kay\leq n<\kay+\ell}z^n\left[\begin{array}{c}
(p_n)^r(q_n)^0\\
(p_n)^{r-1}(q_n)^1\\
\vdots\\
(p_n)^0(q_n)^r
\end{array}\right]
\end{equation}
Since $N_0$ is nonsingular, 
the matrix $z^\kay N_0(\mathrm{id}-z^\ell N_1)$ 
is invertible provided that 
$z^\ell$ does not lie in $\spec (N_1)\cup \{0\}$.
Provided that this is the case
\begin{equation}
\left[\begin{array}{c}
F_{\pq{r}{0}}^{\caret\kay}(z)\\
F_{\pq{r-1}{1}}^{\caret\kay}(z)\\
\vdots\\
F_{\pq{0}{r}}^{\caret\kay}(z)
\end{array}\right]
=
\left(\mathrm{id}-z^\ell N_1\right)^{-1}N_0^{-1}
\sum_{\kay\leq n<\kay+\ell}z^{n-\kay}\left[\begin{array}{c}(p_n)^r(q_n)^0\\ (p_n)^{r-1}(q_n)^1\\ \vdots\\ (p_n)^0(q_n)^r\end{array}\right]
\end{equation}
Substituting into equation~\eqref{eq:F^m+l=F^m} 
gives the equation~\eqref{eq:gen_fn_rat-r-arb} 
with $P_0(z)$ and $P_1(z)$ given by equations~\eqref{eq:P_0+P_1-r_arb}, 
as required.
\end{proof}
\begin{remark}\label{rmk:P1_in_terms_of_A}
From the definition in Theorem~\ref{thm:gen_fn_rat->higher-powers}
above, together with Corollary~\ref{cor:pn/qn-action_under_Gauss_map},
we find that
\begin{align}
P_1(z)
=
\sum_{\kay\leq n<\kay+\ell}z^{n}
E(a_1;r)E(a_2;r)\cdots E(a_n;r)
e_{r+1}
\end{align}
where
$e_{r+1}
=\left[\begin{array}{c}
(p_0)^r(q_0)^0\\
\vdots\\
(p_0)^1(q_0)^{r-1}\\
(p_0)^0(q_0)^r
\end{array}\right]
=
\left[\begin{array}{c}
0\\
\vdots\\
0\\
1
\end{array}\right]$.
The matrix $E(a;r)$ is non-negative and,
when $a>0$, $E(a;r)$ maps $e_{r+1}$ to a positive vector.
As none of the rows of $E(a;r)$ vanish, positive vectors are also mapped to positive vectors. 
Thus $P_1(z)$ is a real-linear combination of non-negative vectors and, for $\ell\neq 1$, at least one will be positive.
\end{remark}
\begin{corollary}\label{cor:gen_fn_rat-radius}
Given a quadratic irrational $\theta$, let $N_1$
denote the matrix corresponding to the periodic part of the continued fraction expansion of $\theta$.
For each $r\in\mathbb{N}$, 
the generating functions $F_{r-s,s}(z)$, $0\leq s\leq r$,  
has radius of convergence
\begin{equation}
\rho=[\specrad(N_1)]^{-1/\ell}
\end{equation}
\end{corollary}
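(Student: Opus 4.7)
My plan is to combine the explicit rational representation from Theorem \ref{thm:gen_fn_rat->higher-powers} with Pringsheim's theorem and the Perron--Frobenius theorem. By Theorem \ref{thm:gen_fn_rat->higher-powers}, each $F_{\pq{r-s}{s}}(z)$ is the $(s+1)$-th coordinate of the rational vector
\[
P_0(z) + (\mathrm{id} - z^\ell M)^{-1} P_1(z), \qquad M := N_0 N_1 N_0^{-1},
\]
and $M$ is conjugate to $N_1$, so $\specrad(M) = \specrad(N_1)$. The Neumann series $(\mathrm{id} - z^\ell M)^{-1} = \sum_{k\geq 0} z^{k\ell} M^k$ converges entrywise whenever $|z|^\ell\specrad(N_1) < 1$, so each $F_{\pq{r-s}{s}}$ is holomorphic in the disk $|z| < \specrad(N_1)^{-1/\ell}$. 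This yields the bound $\rho \geq \specrad(N_1)^{-1/\ell}$.

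For the reverse inequality I would use Pringsheim's theorem: since $p_n, q_n \in \mathbb{N}$, the Taylor coefficients of $F_{\pq{r-s}{s}}$ are non-negative, so $z = \rho$ must itself be a singularity of $F_{\pq{r-s}{s}}$. Rationality makes this singularity a pole, hence a zero of $\det(\mathrm{id} - z^\ell M) = \prod_{\lambda \in \spec(N_1)}(1 - z^\ell \lambda)$. Thus $\rho^\ell = 1/\lambda$ for some positive real $\lambda \in \spec(N_1)$, and the task reduces to showing that the outermost root, at $z_0 = \specrad(N_1)^{-1/\ell}$, is not cancelled for any $s$.

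To rule out cancellation I would invoke Perron--Frobenius. Each $E(a_i; r)$ with $a_i \geq 1$ is a non-negative matrix with no zero rows and with a positive diagonal entry giving a self-loop in its directed graph, and one checks the graph is strongly connected, so $E(a_i;r)$ is primitive and therefore so is $N_1$. Consequently $\specrad(N_1)$ is a simple eigenvalue of $N_1$ with strictly positive right and left Perron eigenvectors $v$ and $w$. The residue of $(\mathrm{id}-z^\ell M)^{-1} P_1(z)$ at $z_0$ is then a nonzero scalar multiple of $(N_0 v)\bigl(w^{\top} N_0^{-1} P_1(z_0)\bigr)$. By Remark \ref{rmk:P1_in_terms_of_A},
\[
N_0^{-1} P_1(z_0) = \sum_{\kay \leq n < \kay+\ell} z_0^{n}\, E(a_{\kay+1};r)\cdots E(a_n;r)\,e_{r+1}
\]
is a non-negative vector with non-vanishing last coordinate, so $w^{\top} N_0^{-1} P_1(z_0) > 0$; and since $N_0$ is non-negative with no zero rows, $N_0 v$ is strictly positive. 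Therefore every coordinate of the residue vector is strictly positive, so each $F_{\pq{r-s}{s}}$ genuinely has a pole at $z_0$, giving $\rho = \specrad(N_1)^{-1/\ell}$.

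The principal obstacle is the no-cancellation step, in particular verifying primitivity of $N_1$ and confirming that the Perron residue projects non-trivially onto every coordinate. Everything else is essentially bookkeeping once one has Theorem \ref{thm:gen_fn_rat->higher-powers}.
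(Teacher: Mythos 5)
Your argument is correct in substance and reaches the same conclusion, but the decisive step is genuinely different from the paper's. The paper does not localize a pole at all: after reducing to the periodic case it shows directly that $(\id-z^\ell N)^{-1}P_1(z)$ diverges as $z\to\rho^-$ along the positive real axis, by writing $N_1^n=\lambda^nP_v+O(\lambda_2^n)$ (Perron--Frobenius with rate), observing that $P_1(z)\geq 0$ and $u=N_1^2P_1(z)>0$ for real $z>0$, and letting the divergent geometric series $\sum(\lambda/\mu)^n$ act on the positive vector $P_vu$. Your route instead identifies the singularity as a simple pole of the resolvent at $z_0=\specrad(N_1)^{-1/\ell}$ and checks that the residue, a nonzero multiple of $(N_0v)\bigl(w^\top N_0^{-1}P_1(z_0)\bigr)$, has all coordinates positive; this is cleaner in that it pins down the nature of the singularity (a genuine first-order pole in every component) rather than mere divergence, and it makes the Pringsheim step redundant --- once the residue is shown nonzero you already have $\rho\leq z_0$, so you could drop Pringsheim entirely. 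Both arguments rest on the same two pillars: primitivity of $N_1$ (simple dominant eigenvalue, strictly positive left and right Perron vectors) and the non-negativity of $P_1$ with a strictly positive last coordinate coming from the $n=\kay$ term $z^{\kay}e_{r+1}$.

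One inference you should repair: ``each $E(a_i;r)$ is primitive and therefore so is $N_1$'' is not valid as stated, since a product of primitive matrices need not be primitive (e.g.\ $\left[\begin{smallmatrix}1&1\\1&0\end{smallmatrix}\right]\left[\begin{smallmatrix}0&1\\1&1\end{smallmatrix}\right]=\left[\begin{smallmatrix}1&2\\0&1\end{smallmatrix}\right]$ is reducible). The conclusion is nevertheless true here for a structural reason: each $E(a;r)$ with $a\geq 1$ has strictly positive last row \emph{and} strictly positive last column, so the product of any two such factors is already a positive matrix; hence $N_1$ is positive when $\ell\geq 2$, and $N_1^2$ is positive when $\ell=1$ (this is exactly the observation the paper makes in a footnote). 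With that substitution your proof goes through.
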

\begin{proof}
For notational simplicity let $N=N_0 N_1 N_0^{-1}$.
The strategy is to show that
\begin{equation}\label{eq:pole_at_rho}
\lim_{z\to \rho^-} (\id-z^\ell N)^{-1}P_1(z)=\infty
\end{equation}
Since 
$(\id-z^\ell N)^{-1} P_1(z)$ 
necessarily has no poles of absolute value less than $\rho$ (as $P_1(z)$ is polynomial) the result will follow.
Below we will prove the limit~\eqref{eq:pole_at_rho} in the periodic case, {\it i.e.}, when $\kay=0$.
The general case will follow as, by Remark~\ref{rmk:P1_in_terms_of_A},
\begin{align}
&
(\id-z^\ell N)^{-1}P_1(z)\notag\\
&=
N_0(\id-z^\ell N_1)^{-1}N_0^{-1}z^{\kay}\sum_{\kay\leq n\leq \kay+\ell}N_0 E(a_{\kay+1};r)\cdots E(a_{\kay+n};r)e_{r+1}\\
&=
z^\kay N_0\cdot (\id-z^\ell N_1)^{-1}\sum_{\kay\leq n\leq \kay+\ell}E(a_{\kay+1};r)\cdots E(a_{\kay+n};r)e_{r+1}
\end{align}
Henceforth we only consider the periodic case.
Observe that the matrix $N_1$ is non-negative and primitive 
\footnote{Recall that a non-negative matrix $M$ is {\it primitive} 
if $M^n$ is positive for some $n\in\mathbb{N}$. In this particular case $N_1^2$ is positive.}.
Thus, by the Perron-Frobenius theorem, 
$\lambda=\specrad(N_1)$ 
is an eigenvalue of $N_1$ and all other eigenvalues have strictly smaller modulus.
This Perron-Frobenius eigenvalue is simple with one-dimensional eigenspace which is the span of some positive right eigenvector $v$.
There is also a corresponding left eigenvector $w$ of $N_1$ which is also positive and, if $P_v$ denotes the projection onto 
$\mathrm{span}(v)$ then $v\cdot w^\top=P_v$, the projection onto $\mathrm{span}(v)$. 
Moreover, for all positive integers $n$,
\begin{equation}\label{ineq:PF_rate}
\|N_1^n/\lambda^n-P_v\|\leq C\lambda_2^n
\end{equation}
where $\lambda_2$ denotes the absolute value of the next largest eigenvalue and $C$ is some positive constant independent of $n$.
See, for instance~\cite[Section 4.5]{LindMarcusBook}. 
(Note -- the rate of convergence is not stated, but follows by observing that if the matrix $M$ has 
eigenvalues all of modulus less than one, then $\|M^n\|\leq r^n$ for all positive $n$, where $r=\max_{s\in\spec M}|s|$.)

For $|z|<\lambda^{-1/\ell}$, the (modified) Neumann series
\begin{equation}
(\id-z^\ell N)^{-1}=\id+z^\ell N+z^{2\ell}N^2+\cdots
\end{equation}
converges absolutely.
Take $z\in (\lambda_2^{-1/\ell},\lambda^{-1/\ell})$.
Since $z$ is positive, for any $n\in\mathbb{N}$ we have $E(a_1;r)\cdots E(a_n;r)e_{r+1}\geq 0$ and hence $P_1(z)\geq 0$.
Then $u=N_1^2 P_1(z)>0$.
Moreover, the Neumann series applied to $P_1(z)$ is positive:
\begin{equation}
0\leq P_1(z)+z^\ell N_1P_1(z)+z^{2\ell}
\sum_{n\geq 0}z^{n\ell}N_1^n u(z)
\end{equation}
Set $\mu=z^{-\ell}$. 
By inequality~\eqref{ineq:PF_rate}, the sum on the right-hand side is
\begin{align}
\sum_{n\geq 0}\mu^{-n}N_1^n u(\mu)
&=
\sum_{n\geq 0}\mu^{-n}\left[\lambda^n P_v u(\mu)+O(\lambda_2^n)\right]
\end{align}
Taking the limit $z\to \frac{1}{\lambda^{1/\ell}}^-$ is equivalent taking the limit to $\mu\to \lambda^+$. 
But observe that 
$\lim_{\mu\to \lambda^+}\left[\sum_{n\geq 0} (\lambda/\mu)^n\right] P_v u(\mu)$ 
diverges to infinity and 
$\lim_{\mu\to \lambda^+}\sum_{n\geq 0} (\lambda_2/\mu)^n$ is convergent.
Therefore, since the Perron-Frobenius eigenvector $v$ is positive, each entry of the vector 
$\left[\sum_{n\geq 0} (\lambda/\mu)^n\right] P_v u(\mu)$ 
must diverge to infinity as $\mu\to\lambda^+$.
Thus the result follows. 
\end{proof}
\begin{corollary}\label{cor:Levy_constant_formula} 
Given a quadratic irrational $\theta$, let $N_1$
denote the matrix corresponding to the periodic part of the continued fraction expansion of $\theta$.
Then the L\'evy constant $\beta(\theta)$ of $\theta$ exists and
\begin{equation}
\beta(\theta)=\frac{1}{\ell} \log\specrad (N_1)
\end{equation}
\end{corollary}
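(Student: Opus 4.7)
The plan is to combine Corollary~\ref{cor:gen_fn_rat-radius} with the asymptotic refinement supplied by the Perron-Frobenius theorem in order to upgrade a $\limsup$ to a genuine limit. By Corollary~\ref{cor:gen_fn_rat-radius}, applied with $r=1$, the generating function $F_q$ has radius of convergence $\rho=[\specrad(N_1)]^{-1/\ell}$, and Cauchy-Hadamard immediately gives
\begin{equation*}
\limsup_{n\to\infty}\frac{1}{n}\log q_n \;=\; -\log\rho \;=\; \frac{1}{\ell}\log\specrad(N_1).
\end{equation*}
The remaining task is to show that the sequence $\frac{1}{n}\log q_n$ genuinely converges, and to identify the limit with the above value.

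First I would restrict attention to the arithmetic progression $n=\kay+m\ell$, $m\in\mathbb{N}$. By the recursion~\eqref{eq:recursion_rel_simple} and the definitions of $N_0$ and $N_1$ in~\eqref{eq:a1a2...al_1}, one has
\begin{equation*}
\left[\begin{array}{cc}p_{\kay+m\ell-1} & p_{\kay+m\ell}\\ q_{\kay+m\ell-1} & q_{\kay+m\ell}\end{array}\right]
\;=\;N_0 N_1^{m},
\end{equation*}
so that $q_{\kay+m\ell}$ is the $(2,2)$-entry of $N_0N_1^m$. As in the proof of Corollary~\ref{cor:gen_fn_rat-radius}, $N_1$ is non-negative and primitive, so the Perron-Frobenius theorem supplies $N_1^m/\lambda^m\to vw^\top$ as $m\to\infty$, where $\lambda=\specrad(N_1)$ and $v,w$ are the strictly positive right and left Perron eigenvectors, normalised so that $w^\top v=1$. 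The $(2,2)$-entry of $N_0 vw^\top$ equals $(N_0v)_2\cdot w_2=(q_{\kay-1}v_1+q_{\kay}v_2)w_2$, which is strictly positive since $q_{\kay-1},q_\kay$ are positive integers and $v,w$ are strictly positive. Hence $q_{\kay+m\ell}=c\lambda^m(1+o(1))$ for some $c>0$, giving
\begin{equation*}
\frac{1}{\kay+m\ell}\log q_{\kay+m\ell}\;\longrightarrow\;\frac{1}{\ell}\log\lambda.
\end{equation*}

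To handle the intermediate indices, I would use the elementary bound $q_n<q_{n+1}\leq(A+1)q_n$, where $A=\max_{1\leq j\leq \kay+\ell}a_j$ (a finite quantity because the continued fraction of $\theta$ is eventually periodic); this follows directly from $q_{n+1}=a_{n+1}q_n+q_{n-1}$ and forces $|\log q_n-\log q_{\kay+m\ell}|\leq\ell\log(A+1)$ whenever $n\in[\kay+m\ell,\kay+(m+1)\ell)$. Dividing by $n$ and invoking the previous step yields $\lim_{n\to\infty}\frac{1}{n}\log q_n=\frac{1}{\ell}\log\specrad(N_1)$, which both establishes existence of the Lévy constant and identifies its value. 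The main subtlety is verifying that the leading coefficient $c$ is strictly positive, rather than merely non-zero, so that passing to the logarithm is legitimate; this is precisely where the strict positivity of the Perron eigenvectors and the non-negativity together with the positive second row of $N_0$ are essential.
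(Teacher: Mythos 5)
Your proposal is correct, but it proves more than the paper does and by a different route. The paper's own proof is very short: it simply \emph{cites} the existence of the limit $\beta(\theta)=\lim_n \frac1n\log q_n$ (deferring to Jager--Liardet, or to the ergodic/Birkhoff argument along the periodic orbit of the Gauss map reproduced in Appendix~\ref{sect:Gauss_Map}), notes that once the limit exists it must equal $-\log\rho$ by Cauchy--Hadamard, and then reads off $\rho$ from Corollary~\ref{cor:gen_fn_rat-radius}. You instead establish existence directly: writing
$\left[\begin{smallmatrix}p_{\kay+m\ell-1}&p_{\kay+m\ell}\\ q_{\kay+m\ell-1}&q_{\kay+m\ell}\end{smallmatrix}\right]=N_0N_1^m$
and applying Perron--Frobenius to the primitive matrix $N_1$ gives $q_{\kay+m\ell}=c\,\lambda^m(1+o(1))$ with $c>0$ (your check that $c>0$ via strict positivity of the Perron eigenvectors and the non-zero, non-negative second row of $N_0$ is exactly the point that needs care; note only that for $\kay=0$ the second row of $N_0=\mathrm{id}$ is $(0,1)$ rather than a pair of positive integers, which still suffices), and the interpolation bound $q_n\leq q_{n+1}\leq(A+1)q_n$ fills in the remaining indices. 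This makes the corollary self-contained within linear algebra, bypassing both the generating-function machinery and the ergodic-theoretic existence argument; in fact your opening appeal to Corollary~\ref{cor:gen_fn_rat-radius} becomes redundant, since your direct computation already identifies the limit as $\frac1\ell\log\specrad(N_1)$. What the paper's route buys is brevity and a conceptual link between the L\'evy constant and the pole structure of $F_q$, which is the theme of the article; what yours buys is independence from the appendix and an explicit asymptotic $q_{\kay+m\ell}\sim c\lambda^m$ that is strictly stronger than the logarithmic limit.
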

\begin{proof}
For a quadratic irrational $\theta$, the existence of the L\'evy constant $\beta(\theta)$ is relatively straightforward.
(See, for instance,~\cite{JagerLiardet1988} or Appendix~\ref{sect:Gauss_Map}).
For arbitrary irrationals $\theta$ (not just quadratic irrationals), when $\beta(\theta)$ exists we know that 
\begin{equation}
\beta(\theta)=\log\left(\lim_{n\to\infty}|q_n|^{1/n}\right)=-\log \rho
\end{equation}
where $\rho$ denotes the radius of convergence of $F_{0,1}$ (and hence also of $F_{1,0}$).
Therefore, by Corollary~\ref{cor:gen_fn_rat-radius} above the result follows.
\end{proof}
%
%
%
%
%

\section{Prime geodesics on the modular surface and elements of the mapping class group of the torus.}
%
%
\subsection{The Mapping Class Group of the Torus.}\label{subsect:MCL_T2}
We collect here some basic facts about the moduli space of the $2$-torus $\mathbb{T}^2$.
The reader should consult~\cite{FarbMargalitBook2012}
for further details.

Let
$\mathrm{Mod}(\mathbb{T}^2)$ 
denote the space of complex structures on $\mathbb{T}^2$.
Equivalently, 
$\mathrm{Mod}(\mathbb{T}^2)$ 
is the space of complex curves of genus $1$.
Each complex curve of genus $1$ is biholomorphic to $\mathbb{C}/\Lambda$, for some (non-degenerate) oriented
\footnote{Here we do not assume that the orientation necessarily agrees with the standard orientation on $\mathbb{C}$ 
-- thus lattices of both orientations are considered.} 
lattice 
$\Lambda\subset \mathbb{C}$.
We say that the lattices 
$\Lambda$ and $\Lambda'$ in $\mathbb{C}$ 
are equivalent, written 
$\Lambda\sim\Lambda'$, if the quotient spaces 
$\mathbb{C}/\Lambda$ and $\mathbb{C}/\Lambda'$
are biholomorphically equivalent.
Consider the lattices 
$\Lambda=\omega_1\mathbb{Z}\oplus\omega_2\mathbb{Z}$ 
and
$\Lambda'=\omega_1'\mathbb{Z}\oplus\omega_2'\mathbb{Z}$.
Then
\begin{enumerate}
\item
$\Lambda=\Lambda'$ if and only if there exist $a,b,c,d\in\mathbb{Z}$ such that $ad-bc=1$ and
\begin{equation*}
\omega_1'=a\omega_1+b\omega_2 \qquad 
\omega_1'=c\omega_1+d\omega_2
\end{equation*}
\item
$\Lambda\sim\Lambda'$ if and only if $\Lambda'=\lambda\cdot \Lambda$, for some 
$\lambda\in\mathbb{C}\setminus\{0\}$
\end{enumerate}
%
\begin{comment}
\noindent
{\it Proof:\/}
\begin{enumerate}
\item
The condition is just a statement that there is an invertible change of basis from 
$\{\omega_1,\omega_2\}$ 
to 
$\{\omega_1',\omega_2'\}$
and the condition $ad-bc=1$ implies the orientation of the bases is preserved and is restricts to a bijection from $\Lambda$ to $\Lambda'$.
\item
This follows trivially as the linear map $z\mapsto \lambda z$ is a biholomorphism. 
\end{enumerate}
/\!/

\vspace{5pt}
\end{comment}
By property (2) it follows that the lattice 
$\Lambda=\omega_1\mathbb{Z}\oplus\omega_2\mathbb{Z}$ 
is equivalent to the lattice
$\Lambda=\mathbb{Z}\oplus\tau\mathbb{Z}$, 
where 
$\tau=\omega_2/\omega_1$ is non-real.
Therefore, by property (1) 
we can identify
$\mathrm{Mod}(\mathbb{T}^2)$ 
with the space
\begin{equation}
\left\{\mathbb{Z}\oplus\tau\mathbb{Z} : \tau\in \mathbb{H}^-\cup\mathbb{H}^+\right\}/\mathrm{PSL}(2,\mathbb{Z})
\end{equation}
Hence, the double of the modular surface
$(\mathbb{H}^-\cup\mathbb{H}^+)/\mathrm{PSL}(2,\mathbb{Z})$ 
can be identified with 
$\mathrm{Mod}(\mathbb{T}^2)$ 
via the mapping
$\tau\mapsto \mathbb{Z}\oplus\tau\mathbb{Z}$.

Fix an oriented closed curve $\gamma$ in 
$
\mathrm{Mod}(\mathbb{T}^2)$, 
together with a point $\tau\in\gamma$.
Consider an arbitrary lift $\tilde\gamma$ of the geodesic $\gamma$ to the universal cover 
$
\widetilde{\mathrm{Mod}}(\mathbb{T}^2)$ 
and let $\tilde\tau$ denote the corresponding lift of $\tau$.
Abusing notation slightly, we denote the geodesic 
completion of the lift of $\gamma$ also by $\tilde\gamma$. 
Let $M_t\in\mathrm{PSL}(2,\mathbb{R})$, $t\in\mathbb{R}$, 
denote the one-parameter hyperbolic subgroup leaving $\tilde\gamma$ 
invariant and fixing the ends of $\tilde\gamma$.  
Assume that $M_t$ is parametrised so that $M=M_1$ is prime
and such that for any point on $\tilde\gamma$ the orientation of 
the geodesic segment from the point to its image under $M$ agrees with the 
orientation of $\tilde\gamma$.
Let 
$\Lambda_0$
denote the lattice corresponding to $\tilde\tau$, {\it i.e.}, in vector notation 
\begin{equation}
\Lambda_{0}=
\left[\begin{array}{c}1\\0\end{array}\right]\mathbb{Z}\oplus
\left[\begin{array}{c}\Re\tilde\tau\\ \Im\tilde\tau\end{array}\right]\mathbb{Z}
\end{equation}
and, for $t\in\mathbb{R}$, define $\Lambda_t=M_t(\Lambda_0)$, {\it i.e.}, in vector notation the lattice given by
\begin{equation}
\Lambda_t=
M_t\left[\begin{array}{c}1\\0\end{array}\right]\mathbb{Z}\oplus
M_t\left[\begin{array}{c}\Re\tilde\tau\\ \Im\tilde\tau\end{array}\right]\mathbb{Z}
\end{equation}
Since 
$M=M_1\in\mathrm{PSL}(2,\mathbb{Z})$, 
$M$ preserves the lattice $\Lambda_0$, 
{\it i.e.}, 
$\Lambda_0$ is equivalent to $\Lambda_1$.
Thus $M$ descends to a holomorphic self-map of 
$\mathbb{C}/\Lambda_0$.
In fact, $M$ induces a hyperbolic toral automorphism of 
$\mathbb{C}/\Lambda_0$.
Moreover, if any other point $\tau'\in\gamma$ is chosen, 
the resulting hyperbolic toral automorphism  
will be conjugate 
(where the conjugacy is induced by an element of $\mathrm{PGL}(2,\mathbb{R})$) 
to the induced hyperbolic toral automorphism corresponding to $\tau$.
Consequently, by normalising to the standard 2-torus 
$\mathbb{T}^2\simeq\mathbb{C}/(\mathbb{Z}+\imath\mathbb{Z})$ 
or otherwise,
the induced toral automorphism is independent of the point $\tau$.
Similarly, for any other lift of $\gamma$ the corresponding 
hyperbolic toral automorphism constructed above will be conjugate 
(where, this time, the conjugacy is induced by an element of $\mathrm{PGL}(2,\mathbb{Z})$).    
%
\begin{comment}
\noindent
{\it Proof:\/}
The only non-trivial part is showing that $M$ induces a hyperbolic toral automorphism.
But this follows as $\gamma$ being a closed geodesic, implies that $M$ fixes the distinct endpoints $\gamma_-$ and $\gamma_+$ of the lift $\tilde\gamma$.
As $M$ has exactly two fixed points, this means that $\mathrm{tr}(M)^2-4>0$, and hence the matrix $M$ has two distinct real eigenvalues. 
Therefore $M$ is hyperbolic matrix, and thus descends to a hyperbolic toral automorphism.
/\!/

\vspace{5pt}

\end{comment}

\subsection{Mapping class elements associated to quadratic irrationals.}
Let $\theta\in[0,1]\setminus\mathbb{Q}$ be a quadratic irrational and let $\theta'$ denote the Galois conjugate.
Denote the corresponding pre-periodic simple continued fraction expansion of $\theta$ by
\begin{equation}\label{eq:simple_ctd_frac_exp-preperiodic}
[a_1,a_2,\ldots,a_{\kay},\overline{a_{\kay+1},a_{\kay+2},\ldots,a_{\kay+\ell}}]
\end{equation}
In Section~\ref{subsubsect:Quad_Irr} it was stated that the matrix 
$N=N_\theta=N_0^{-1}N_1N_0$ 
is hyperbolic, and is prime when $\ell$ is even and either
(a) is the minimal period of the sequence $a_{\kay+1},\ldots,a_{\kay+\ell}$, or
(b) is twice the minimal period of the sequence $a_{\kay+1},\ldots,a_{\kay+\ell}$ which is odd.
In this case $N_1$ corresponds to a conjugate-related pair of geodesics 
$\gamma^-$ and $\gamma^+$ in $\mathbb{H}^-\cup\mathbb{H}^+$ with ends $\theta$ and $\theta'$.
In the preceding Section~\ref{subsect:MCL_T2} we saw that
the geodesic $\gamma^\pm$ induces a closed path on $\mathrm{Mod}(\mathbb{T}^2)$ which,
in turn, induces a hyperbolic toral automorphism on $\mathbb{T}^2$.
In fact, this is the hyperbolic toral automorphism $f_N\colon \mathbb{T}^2\to\mathbb{T}^2$ induced by the matrix $N$.
\begin{remark}
The construction in Section~\ref{subsect:MCL_T2} also works 
in case (b) when the minimal period is odd 
(so that $N_1$ is not prime but $N_1^2$ is).
The hyperbolic element $N_1$ interchanges the geodesics $\gamma^-$ and $\gamma^+$.
These geodesics induce a pair of one-parameter families of lattices $\Lambda_t^-$ and $\Lambda_t^+$, $t\in\mathbb{R}$, 
which are conjugate-related,
{\it i.e.}, $\Lambda_t^\pm=\overline{\Lambda_t^\mp}$. 
This results in a pair of orientation-reversing linear torus maps 
$f_{N,\pm}\colon\mathbb{C}/\Lambda_1^\pm\to\mathbb{C}/\Lambda_1^\mp$. 
where $f_{N,-}\circ f_{N,+}=f_{N,+}\circ f_{N,-}=f_N$.
\end{remark}
We relate this to the L\'evy constant of a quadratic irrational in the following way.
\begin{proof}[Proof of Theorem~\ref{thm:Levy_vs_entropy}]
Take $\theta$ to be the quadratic irrational with
simple continued fraction expansion given by~\eqref{eq:simple_ctd_frac_exp-preperiodic}.
It was shown in Corollary~\ref{cor:Levy_constant_formula} 
(see also~\cite{JagerLiardet1988,BelovaHazard2017a})
that
the L\'{e}vy constant is given by
\begin{equation}
\beta(\theta)=\frac{1}{\ell}\log\specrad(N_1)
\end{equation}
where
\begin{equation}
N_1=
\left[\begin{array}{cc}0&1\\ 1& a_{\kay+1}\end{array}\right]
\left[\begin{array}{cc}0&1\\ 1& a_{\kay+2}\end{array}\right]
\cdots
\left[\begin{array}{cc}0&1\\ 1& a_{\kay+\ell}\end{array}\right]
\end{equation}
Since $N_1$ and $N$ are conjugate, it follows that $\specrad(N_1)=\specrad(N)$.
As was demonstrated in the previous section, $N$ 
is the hyperbolic linear transformation corresponding to the hyperbolic toral automorphism associated with $\theta$.
Consequently we get a first proof of Theorem~\ref{thm:Levy_vs_entropy}, as required.
\end{proof}
We will now consider relation between $f_\theta=f_{N_\theta}$ 
and the continued fraction expansion of $\theta$ in more detail by applying the results of Section~\ref{sect:Gen_fn} in the case $r=1$.
For simplicity of notation, let us write $F_p$ and $F_q$ 
for $F_{\pq{1}{0}}$ and $F_{\pq{0}{1}}$ respectively. 
We will also write $E(a)$ for $E(a;1)$.
The following is a straightforward calculation, which we leave to the reader.
\begin{proposition}\label{prop:mx_gen_fn-shift}
For each non-negative integer $m$,
\begin{equation}
\sum_{n\geq 0}z^n
\left[\begin{array}{cc}p_n^{\caret m}&p_{n+1}^{\caret m}\\ q_n^{\caret m}&q_{n+1}^{\caret m}\end{array}\right]
=
\left[\begin{array}{cc}F_p^{\caret m}(z)&z^{-1}(F_p^{\caret m}(z)-p_0^{\caret m})\\ F_q^{\caret m}(z)&z^{-1}(F_q^{\caret m}(z)-q_0^{\caret m})\end{array}\right]
\end{equation}
\end{proposition}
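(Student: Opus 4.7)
The plan is to prove this by direct calculation, splitting the matrix identity into its four entries and treating each as a generating function manipulation. Since the left-hand side is a matrix-valued power series, equality with the right-hand side is equivalent to equality of the four scalar entries.

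First I would handle the left column. The $(1,1)$ entry of the left-hand side is $\sum_{n\geq 0} p_n^{\caret m} z^n$, which is exactly $F_p^{\caret m}(z)$ by the definition given in Section~\ref{subsect:Gen_fn-Gauss_map}. The $(2,1)$ entry similarly gives $F_q^{\caret m}(z)$.

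Next I would handle the right column via the standard index-shift trick for generating functions. For the $(1,2)$ entry, substituting $k=n+1$ yields
\begin{equation*}
\sum_{n\geq 0} z^n p_{n+1}^{\caret m} = z^{-1}\sum_{k\geq 1} z^k p_k^{\caret m} = z^{-1}\left(F_p^{\caret m}(z)-p_0^{\caret m}\right),
\end{equation*}
and an identical computation handles the $(2,2)$ entry with $q$ in place of $p$. Comparing with the right-hand side completes the proof.

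There is no real obstacle here, as the proposition is essentially a repackaging of the definition of $F_p^{\caret m}$ and $F_q^{\caret m}$ together with the elementary observation that left-shifting the coefficient sequence of a power series corresponds to subtracting the constant term and dividing by $z$. The only minor care needed is to verify that the entries combine correctly as matrix entries, which follows immediately since the sum over $n$ acts entry-wise on the $2\times 2$ matrices indexed by $n$.
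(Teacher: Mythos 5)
Your proof is correct and is exactly the "straightforward calculation" the paper leaves to the reader: the left column is the definition of $F_p^{\caret m}$ and $F_q^{\caret m}$, and the right column is the standard index-shift identity for generating functions. Nothing further is needed.
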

\begin{proposition}\label{prop:mx_gen_fn-action_of_Gauss_map}
For each positive integer $m$,
if
$N_0=E(a_1)\cdots E(a_m)$
then
\begin{align}
&
\sum_{n\geq 0} z^n
\left[\begin{array}{cc}p_n&p_{n+1}\\ q_n&q_{n+1}\end{array}\right]
\notag\\
&=
\sum_{0\leq n<m} z^n
\left[\begin{array}{cc}p_n&p_{n+1}\\ q_n&q_{n+1}\end{array}\right]
+z^m N_0
\left[\begin{array}{cc}F_p^{\caret m}(z)&z^{-1}(F_p^{\caret m}(z)-p_0^{\caret m})\\ F_q^{\caret m}(z)&z^{-1}(F_q^{\caret m}(z)-q_0^{\caret m})\end{array}\right]
\end{align}
\end{proposition}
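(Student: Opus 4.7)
The plan is to split the sum on the left-hand side as
\begin{equation*}
\sum_{n\geq 0} z^n \left[\begin{array}{cc}p_n & p_{n+1}\\ q_n & q_{n+1}\end{array}\right]
= \sum_{0\leq n<m} z^n \left[\begin{array}{cc}p_n & p_{n+1}\\ q_n & q_{n+1}\end{array}\right]
+ \sum_{n\geq m} z^n \left[\begin{array}{cc}p_n & p_{n+1}\\ q_n & q_{n+1}\end{array}\right].
\end{equation*}
The first summand already matches one of the two terms on the right-hand side, so everything reduces to re-expressing the tail.

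For the tail I would re-index by $n=m+k$ and invoke the iterated form of Theorem~\ref{thm:p/q-action_under_Gauss_map} (equivalently, the $r=1$ case of Corollary~\ref{cor:pn/qn-action_under_Gauss_map} as recorded in Remark~\ref{rmk:pn/qn-action_under_Gauss_map}). Applied $m$ times and rearranged so that the left-most factors are grouped into $N_0=E(a_1)\cdots E(a_m)$, that result gives the column identity
\begin{equation*}
\left[\begin{array}{c}p_{m+k}\\ q_{m+k}\end{array}\right]
= N_0\left[\begin{array}{c}p_k^{\caret m}\\ q_k^{\caret m}\end{array}\right]
\end{equation*}
valid for every non-negative integer $k$. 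Using this identity on both the $k$th and the $(k+1)$st column assembles into the matrix equation
\begin{equation*}
\left[\begin{array}{cc}p_{m+k} & p_{m+k+1}\\ q_{m+k} & q_{m+k+1}\end{array}\right]
= N_0\left[\begin{array}{cc}p_k^{\caret m} & p_{k+1}^{\caret m}\\ q_k^{\caret m} & q_{k+1}^{\caret m}\end{array}\right],
\end{equation*}
so that $N_0$, being independent of $k$, factors out of the sum:
\begin{equation*}
\sum_{n\geq m} z^n \left[\begin{array}{cc}p_n & p_{n+1}\\ q_n & q_{n+1}\end{array}\right]
= z^m N_0 \sum_{k\geq 0} z^k \left[\begin{array}{cc}p_k^{\caret m} & p_{k+1}^{\caret m}\\ q_k^{\caret m} & q_{k+1}^{\caret m}\end{array}\right].
\end{equation*}

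To finish I would feed the remaining sum into Proposition~\ref{prop:mx_gen_fn-shift} applied to $\theta^{\caret m}$ in place of $\theta$; this immediately identifies it with the matrix
\begin{equation*}
\left[\begin{array}{cc}F_p^{\caret m}(z) & z^{-1}(F_p^{\caret m}(z)-p_0^{\caret m})\\ F_q^{\caret m}(z) & z^{-1}(F_q^{\caret m}(z)-q_0^{\caret m})\end{array}\right],
\end{equation*}
which is exactly what is claimed.

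There is no serious obstacle here; everything is a direct consequence of Theorem~\ref{thm:p/q-action_under_Gauss_map} iterated $m$ times, combined with the shifted-sum bookkeeping of Proposition~\ref{prop:mx_gen_fn-shift}. The only mildly delicate point is that the Gauss-shift identity is stated on columns, but since $N_0$ does not depend on $k$ it commutes with the column-stacking and with the summation over $k$, so the lifting to a $2\times 2$ matrix identity is automatic.
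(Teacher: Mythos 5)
Your proof is correct and follows essentially the same route as the paper: both arguments rest on the iterated Gauss-shift identity $\bigl[\begin{smallmatrix}p_{m+k}\\ q_{m+k}\end{smallmatrix}\bigr]=N_0\bigl[\begin{smallmatrix}p_k^{\caret m}\\ q_k^{\caret m}\end{smallmatrix}\bigr]$ together with the bookkeeping of Proposition~\ref{prop:mx_gen_fn-shift}. The only (cosmetic) difference is that you factor $N_0$ out of the tail of the matrix-valued series termwise and then invoke Proposition~\ref{prop:mx_gen_fn-shift} for $\theta^{\caret m}$, whereas the paper applies the already-established generating-function identity of Corollary~\ref{cor:Gen_fn-action_under_Gauss_map_2} to each column separately and then adjoins the columns.
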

\begin{proof}
By the preceding Proposition~\ref{prop:mx_gen_fn-shift} in the case $m=0$, 
together with Corollary~\ref{cor:Gen_fn-action_under_Gauss_map_2}
we find that
\begin{align}
&
\sum_{n\geq 0} z^n
\left[\begin{array}{c}p_n\\ q_n\end{array}\right]
=
\left[\begin{array}{c}F_p(z)\\ F_q(z)\end{array}\right]\\
&=
\sum_{0\leq n<m} z^n
\left[\begin{array}{c}p_n\\ q_n\end{array}\right]
+z^m E(a_1)\cdots E(a_m)
\left[\begin{array}{c}F_p^{\caret m}(z)\\ F_q^{\caret m}(z)\end{array}\right]\\
&
\sum_{n\geq 0} z^n
\left[\begin{array}{c}p_{n+1}\\ q_{n+1}\end{array}\right]
=
\left[\begin{array}{c}z^{-1}(F_p(z)-p_0)\\ z^{-1}(F_q(z)-q_0)\end{array}\right]\\
&=
\sum_{0\leq n<m} z^{n}
\left[\begin{array}{c}p_{n+1}\\ q_{n+1}\end{array}\right]
+z^m E(a_1)\cdots E(a_m)
\left[\begin{array}{c}z^{-1}(F_p^{\caret m}(z)-p_0^{\caret m})\\ z^{-1}(F_q^{\caret m}(z)-q_0^{\caret m})\end{array}\right]
\end{align}
Here we have used that 
$\left[\begin{array}{c}p_m\\ q_m\end{array}\right]
=
E(a_1)\cdots E(a_m)
\left[\begin{array}{c}p_0^{\caret m}\\ q_0^{\caret m}\end{array}\right]$ 
(see Corollary~\ref{cor:pn/qn-action_under_Gauss_map} and the subsequent Remark~\ref{cor:pn/qn-action_under_Gauss_map}), 
then cancelled $z^{-1}$ and re-indexed the sum.
Setting 
$N_0=E(a_1)\cdots E(a_m)$ 
and adjoining the column vectors 
now gives the result.
\end{proof}
\begin{proposition}\label{prop:mx_gen_fn-periodic}
If we define
\begin{equation}
N_1
=E(a_1^{\caret\kay})\cdots E(a_\ell^{\caret\kay})
=E(a_{\kay+1})\cdots E(a_{\kay+\ell})
\end{equation}
and set
\begin{equation}
W(z)=
\sum_{n\geq 0} z^{n\ell} N_1^n
\qquad
V(z)=
\sum_{0\leq k<\ell}z^k E(a_{\kay+1})\cdots E(a_{\kay+k+1})
\end{equation}
then the following equality holds,
\begin{equation}
\sum_{n\geq 0}z^n
\left[\begin{array}{cc}p_n^{\caret \kay}&p_{n+1}^{\caret \kay}\\ q_n^{\caret \kay}&q_{n+1}^{\caret \kay}\end{array}\right]
=
W(z)V(z)
\end{equation}
\end{proposition}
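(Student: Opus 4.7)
The plan is to express each summand as a single product of $E(a_j)$'s and then split the index $n$ modulo $\ell$ to factor the generating function as a Cauchy product of two geometric-type series, giving $W(z)V(z)$.

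First I would invoke the recurrence relation~\eqref{eq:recursion_rel_simple} applied to the quadratic irrational $\theta^{\caret\kay}$ (whose partial quotients are $a_{\kay+1},a_{\kay+2},\ldots$) to write, for each $n\geq 0$,
\begin{equation*}
\left[\begin{array}{cc}p_n^{\caret\kay}&p_{n+1}^{\caret\kay}\\ q_n^{\caret\kay}&q_{n+1}^{\caret\kay}\end{array}\right]
=E(a_{\kay+1})E(a_{\kay+2})\cdots E(a_{\kay+n+1}).
\end{equation*}
(Checking the base case $n=0$ gives $E(a_{\kay+1})$, consistent with $p_0^{\caret\kay}=0$, $q_0^{\caret\kay}=1$, $p_1^{\caret\kay}=1$, $q_1^{\caret\kay}=a_{\kay+1}$.)

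Next, because $\theta^{\caret\kay}$ has purely periodic continued fraction expansion of period $\ell$, we have $a_{\kay+j+\ell}=a_{\kay+j}$ for all $j\geq 1$, and hence $E(a_{\kay+j+\ell})=E(a_{\kay+j})$. For any $n\geq 0$ I would then write the unique division $n=q\ell+k$ with $q\geq 0$ and $0\leq k\leq \ell-1$, so that $n+1=q\ell+(k+1)$. Splitting the product of $E(a_{\kay+\cdot})$ into its $q$ complete periods followed by the partial period of length $k+1$, and using the periodicity to identify each complete period with $N_1=E(a_{\kay+1})\cdots E(a_{\kay+\ell})$, gives
\begin{equation*}
E(a_{\kay+1})\cdots E(a_{\kay+n+1})
=N_1^{q}\cdot E(a_{\kay+1})\cdots E(a_{\kay+k+1}).
\end{equation*}

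Finally, substituting this factorisation into the generating function and re-indexing yields
\begin{align*}
\sum_{n\geq 0}z^n\!
\left[\begin{array}{cc}p_n^{\caret\kay}&p_{n+1}^{\caret\kay}\\ q_n^{\caret\kay}&q_{n+1}^{\caret\kay}\end{array}\right]
&=\sum_{q\geq 0}\sum_{k=0}^{\ell-1}z^{q\ell+k}\,N_1^{q}\,E(a_{\kay+1})\cdots E(a_{\kay+k+1})\\
&=\Bigl(\sum_{q\geq 0}z^{q\ell}N_1^{q}\Bigr)\Bigl(\sum_{0\leq k<\ell}z^k E(a_{\kay+1})\cdots E(a_{\kay+k+1})\Bigr),
\end{align*}
which is precisely $W(z)V(z)$. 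The only subtle point is the index bookkeeping in the division $n=q\ell+k$ and its relation to the right endpoint $n+1$; once set up correctly there is no genuine obstacle, and the factorisation is simply a Cauchy-product identity enabled by the periodicity of the partial quotients.
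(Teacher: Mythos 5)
Your proposal is correct and follows essentially the same route as the paper: the identity $\bigl[\begin{smallmatrix}p_n^{\caret\kay}&p_{n+1}^{\caret\kay}\\ q_n^{\caret\kay}&q_{n+1}^{\caret\kay}\end{smallmatrix}\bigr]=E(a_{\kay+1})\cdots E(a_{\kay+n+1})$ is exactly Remark~\ref{rmk:N1_in_terms_of_pq}, and the paper likewise writes $n=q\ell+m$, uses periodicity to collapse the complete periods into powers of $N_1$, and factors the resulting double sum into $W(z)V(z)$. Your explicit attention to the bookkeeping around $n+1=q\ell+(k+1)$ is a welcome clarification but does not change the argument.
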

\begin{remark}\label{rmk:N1_in_terms_of_pq} 
Recall that, for each non-negative integer $n$,
\begin{equation}
\left[\begin{array}{cc}p_{n}^{\caret\kay}& p_{n+1}^{\caret\kay}\\ q_{n}^{\caret\kay}& q_{n+1}^{\caret\kay}\end{array}\right]
=E(a_1^{\caret\kay})\cdots E(a_{n+1}^{\caret\kay})
\end{equation}
Thus the left-hand side above coincides with the matrix $N_1$ in Proposition~\ref{prop:mx_gen_fn-periodic}.
\end{remark}
\begin{proof}[Proof of Proposition~\ref{prop:mx_gen_fn-periodic}]
By Remark~\ref{rmk:N1_in_terms_of_pq} above, 
and since $a_{n+\ell}=a_{n}$ for all $n\geq\kay+1$, 
we find that
\begin{align}
&
\sum_{n\geq 0}z^n
\left[\begin{array}{cc}p_n^{\caret \kay}&p_{n+1}^{\caret \kay}\\ q_n^{\caret \kay}&q_{n+1}^{\caret \kay}\end{array}\right]\notag\\
&=
\sum_{n\geq 0}
z^n E(a_{\kay+1})\cdots E(a_{\kay+n+1})\\
&=
\sum_{n\geq 0}
\sum_{0\leq m<\ell}
z^{n\ell} \left(E(a_{\kay+1})\cdots E(a_{\kay+\ell})\right)^n 
\cdot z^m E(a_{\kay+1})\cdots E(a_{\kay+m+1})\\
&=
\left(\sum_{n\geq 0} z^{n\ell} N_1^n\right)
\left(\sum_{0\leq m<\ell}z^m E(a_{\kay+1})\cdots E(a_{\kay+m+1})\right)
\end{align}
The result now follows.
\end{proof}
Rearranging the equation in Proposition~\ref{prop:mx_gen_fn-action_of_Gauss_map} when $m=\kay$,
and applying Proposition~\ref{prop:mx_gen_fn-shift} and Proposition~\ref{prop:mx_gen_fn-periodic} above, together with 
Theorem~\ref{thm:gen_fn_rat->higher-powers},
now gives the following.
\begin{corollary}\label{cor:W-rational}
Let $W(z)$ be given by Proposition~\ref{prop:mx_gen_fn-periodic} above.
Then
\begin{equation}
W(z)=U(z)^{-1}(X(z)-Y(z))V(z)^{-1}
\end{equation}
where
\begin{align}
U(z)
&=z^\kay E(a_1)E(a_2)\cdots E(a_\kay)\\
V(z)
&=\sum_{0\leq m<\ell}z^m E(a_{\kay+1})\cdots E(a_{\kay+m+1})\\
X(z)
&=\sum_{n\geq 0}z^n E(a_1)E(a_2)\cdots E(a_{n+1})\\
Y(z)
&=\sum_{0\leq n<\kay}z^nE(a_1)E(a_2)\cdots E(a_{n+1})
\end{align}
In particular, since 
$X(z)=\left[\begin{array}{cc}F_p(z)&z^{-1}(F_p(z)-p_0)\\F_q(z)&z^{-1}(F_q(z)-q_0)\end{array}\right]$
is a rational function in the variable $z$ with integer coefficients, it follows that $W(z)$ is a rational function 
with integer coefficients.
\end{corollary}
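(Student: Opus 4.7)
The plan is to assemble the identity by specialising the shift-formula in Proposition~\ref{prop:mx_gen_fn-action_of_Gauss_map} to $m=\kay$, rewriting both sides in the language of $E(a;1)$ products, and then inverting two auxiliary matrix polynomials.

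First I would set $m=\kay$ in Proposition~\ref{prop:mx_gen_fn-action_of_Gauss_map}. Using the identity
\begin{equation*}
\left[\begin{array}{cc}p_n & p_{n+1}\\ q_n & q_{n+1}\end{array}\right]
= E(a_1)E(a_2)\cdots E(a_{n+1}),
\end{equation*}
(which is immediate from~\eqref{eq:recursion_rel_simple}), the left-hand side of that equation is exactly $X(z)$ and the first finite sum on the right-hand side is exactly $Y(z)$. The coefficient $z^\kay N_0$ in the remaining term is $U(z)$ by definition. So the equation collapses to
\begin{equation*}
X(z)\;=\;Y(z)\;+\;U(z)\,\Phi(z),
\end{equation*}
where $\Phi(z)$ denotes the matrix
$\left[\begin{array}{cc}F_p^{\caret\kay}(z)&z^{-1}(F_p^{\caret\kay}(z)-p_0^{\caret\kay})\\ F_q^{\caret\kay}(z)&z^{-1}(F_q^{\caret\kay}(z)-q_0^{\caret\kay})\end{array}\right]$.

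Next, by Proposition~\ref{prop:mx_gen_fn-shift} applied at level $\kay$, this $\Phi(z)$ is precisely $\sum_{n\geq 0}z^n\begin{bmatrix}p_n^{\caret\kay} & p_{n+1}^{\caret\kay}\\ q_n^{\caret\kay} & q_{n+1}^{\caret\kay}\end{bmatrix}$, and Proposition~\ref{prop:mx_gen_fn-periodic} factors this as $W(z)V(z)$. Substituting gives the key equation
\begin{equation*}
X(z)-Y(z)\;=\;U(z)\,W(z)\,V(z).
\end{equation*}
It then remains only to left-multiply by $U(z)^{-1}$ and right-multiply by $V(z)^{-1}$, which requires these two inverses to exist as matrices of rational functions.

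For $U(z)=z^\kay N_0$, the matrix $N_0$ is a product of matrices of determinant $-1$, so $\det U(z)=(-1)^\kay z^{2\kay}\not\equiv 0$, and $U(z)^{-1}=z^{-\kay}N_0^{-1}$ is rational with integer coefficients (since $N_0^{-1}$ has integer entries). For $V(z)$, observe that evaluating at $z=0$ gives $V(0)=E(a_{\kay+1})$, which has determinant $-1$; hence $\det V(z)$ is a nonzero polynomial in $z$, and Cramer's rule gives $V(z)^{-1}$ as a matrix of rational functions with integer coefficients. Finally, rationality of $X(z)$ follows from splitting its columns and recognising them as $F_p$, $F_q$, $z^{-1}(F_p-p_0)$, $z^{-1}(F_q-q_0)$, all of which are rational functions with integer coefficients by Theorem~\ref{thm:gen_fn_rat}, while $Y(z)$ is a polynomial by inspection.

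The main potential obstacle is the invertibility of $V(z)$: a priori it is a length-$\ell$ sum of products of $E(a_j;1)$, and one must rule out cancellation of $\det V(z)$. The observation that $V(0)=E(a_{\kay+1})$ already has nonzero determinant cleanly disposes of this; all remaining steps are bookkeeping with finite sums and the re-indexing identities used earlier in Section~\ref{subsect:Gen_fn-Gauss_map}.
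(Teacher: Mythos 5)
Your proof is correct and follows essentially the same route as the paper: specialise Proposition~\ref{prop:mx_gen_fn-action_of_Gauss_map} to $m=\kay$, identify the three pieces as $X(z)$, $Y(z)$ and $U(z)$ times the level-$\kay$ matrix generating function, factor the latter as $W(z)V(z)$ via Propositions~\ref{prop:mx_gen_fn-shift} and~\ref{prop:mx_gen_fn-periodic}, and invert. Your explicit check that $V(0)=E(a_{\kay+1})$ has determinant $-1$ is a clean way to settle the invertibility of $V(z)$, which the paper instead addresses by computing $\det V(z)$ in full in the proposition that follows the corollary.
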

The matrix polynomial $U(z)$ is invertible provided that $z$ is non-zero (and in fact the above expression for $W(z)$ 
obviously can be continued to this case).
The invertibility of $V(z)$ is determined by the following.
\begin{proposition}
\begin{align}
&\det V(z)=\\
&
\det\! \left[\begin{array}{cc}
p_{0}^{\caret\kay} & p_{\ell}^{\caret\kay}\\
q_{0}^{\caret\kay} & q_{\ell}^{\caret\kay}
\end{array}\right] z^{\ell-1}
\!+\!
\sum_{0<m<\ell}
\left(\!
\det\!\left[\begin{array}{cc}
p_{0}^{\caret\kay} & p_{m}^{\caret\kay}\\
q_{0}^{\caret\kay} & q_{m}^{\caret\kay}
\end{array}\right] z^{m-1}
\!+\!
\det\!\left[\begin{array}{cc}
p_{m}^{\caret\kay} & p_{\ell}^{\caret\kay}\\
q_{m}^{\caret\kay} & q_{\ell}^{\caret\kay}
\end{array}\right] z^{\ell+m-1}
\!\right)\notag
\end{align}
\end{proposition}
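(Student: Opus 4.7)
The plan is to expand $\det V(z)$ directly and identify a telescoping cancellation. By Remark~\ref{rmk:N1_in_terms_of_pq}, each summand of $V(z)$ equals
\[
z^m\left[\begin{array}{cc} p_m^{\caret\kay} & p_{m+1}^{\caret\kay} \\ q_m^{\caret\kay} & q_{m+1}^{\caret\kay} \end{array}\right],
\]
so expanding the determinant of the resulting $2\times 2$ matrix polynomial gives
\[
\det V(z) = \sum_{m,n=0}^{\ell-1} z^{m+n}\bigl(p_m^{\caret\kay}q_{n+1}^{\caret\kay} - p_{m+1}^{\caret\kay}q_n^{\caret\kay}\bigr).
\]

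Next, I would collect the coefficient of $z^j$. Substituting $k = n+1$ in the first product $p_m^{\caret\kay} q_{n+1}^{\caret\kay}$ and $k = m+1$ in the second $p_{m+1}^{\caret\kay} q_n^{\caret\kay}$ (and relabelling indices in the latter sum), this coefficient becomes
\[
[z^j]\det V(z) = \sum_{(m,k)\in S_1^j} p_m^{\caret\kay}q_k^{\caret\kay} - \sum_{(m,k)\in S_2^j} p_m^{\caret\kay}q_k^{\caret\kay},
\]
where $S_1^j = \{(m,k) : m+k=j+1,\ 0\le m\le\ell-1,\ 1\le k\le\ell\}$ and $S_2^j = \{(m,k) : m+k=j+1,\ 1\le m\le\ell,\ 0\le k\le\ell-1\}$. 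Crucially, $S_1^j\cap S_2^j$ is the interior $\{(m,k) : 1\le m,k\le\ell-1,\ m+k=j+1\}$, and these contributions cancel in the difference, leaving only the boundary.

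Finally, I would identify the surviving terms case by case. For $0\le j\le\ell-2$, only $(0,j+1)\in S_1^j\setminus S_2^j$ and $(j+1,0)\in S_2^j\setminus S_1^j$ survive, producing coefficient $p_0^{\caret\kay}q_{j+1}^{\caret\kay} - p_{j+1}^{\caret\kay}q_0^{\caret\kay}$. For $\ell\le j\le 2\ell-2$, only $(j+1-\ell,\ell)\in S_1^j\setminus S_2^j$ and $(\ell, j+1-\ell)\in S_2^j\setminus S_1^j$ survive, producing $p_{j+1-\ell}^{\caret\kay}q_\ell^{\caret\kay} - p_\ell^{\caret\kay}q_{j+1-\ell}^{\caret\kay}$. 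At the crossover $j=\ell-1$ the two boundary types collapse onto the single pair $(0,\ell)\in S_1^j$ and $(\ell,0)\in S_2^j$, yielding $p_0^{\caret\kay}q_\ell^{\caret\kay} - p_\ell^{\caret\kay}q_0^{\caret\kay}$. Re-indexing via $m=j+1$ in the first range and $m=j+1-\ell$ in the last, these assemble into precisely the three groups of terms in the claimed formula. The only obstacle is bookkeeping at the three boundary cases, but this is a purely combinatorial exercise with no conceptual difficulty.
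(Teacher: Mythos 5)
Your proof is correct and follows essentially the same route as the paper: both start from the representation $V(z)=\sum_{0\le m<\ell}z^m\bigl[\begin{smallmatrix}p_m^{\caret\kay}&p_{m+1}^{\caret\kay}\\ q_m^{\caret\kay}&q_{m+1}^{\caret\kay}\end{smallmatrix}\bigr]$, expand the determinant bilinearly, and observe that the cross terms cancel, leaving only boundary contributions. The paper merely organizes the same cancellation by factoring out the partial sums $P=\sum_{0<m<\ell}p_m^{\caret\kay}z^m$ and $Q=\sum_{0<m<\ell}q_m^{\caret\kay}z^m$ (so that the cancellation is the identity $PQ-QP=0$) rather than tracking the coefficient of each $z^j$ explicitly as you do.
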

\begin{proof}
Let 
$V(z)
=
\left[\begin{array}{cc}V_{11}(z)& V_{12}(z)\\ V_{21}(z)& V_{22}(z)\end{array}\right]$.
By the Remark~\ref{rmk:N1_in_terms_of_pq} above we have that 
$E(a_{\kay+1})\cdots E(a_{\kay+m+1})
=\left[\begin{array}{cc}p_{m}^{\caret\kay}&p_{m+1}^{\caret\kay}\\q_{m}^{\caret\kay}&q_{m+1}^{\caret\kay}\end{array}\right]$.
This observation, together with the definition of $V(z)$ gives
\begin{equation}
V(z)
=
\sum_{0\leq m<\ell}z^m 
E(a_{\kay+1})\cdots E(a_{\kay+m+1})
=
\sum_{0\leq m<\ell}z^m 
\left[\begin{array}{cc}p_{m}^{\caret\kay}&p_{m+1}^{\caret\kay}\\q_{m}^{\caret\kay}&q_{m+1}^{\caret\kay}\end{array}\right]
\end{equation}
Therefore we have the following expressions for the entries of $V(z)$:
\begin{align}
V_{11}(z)
&=&\sum_{0\leq m< \ell} p_{m}^{\caret\kay} z^m
&=&p_{0}^{\caret\kay}+P\\
V_{12}(z)
&=&\sum_{0\leq m< \ell} p_{m+1}^{\caret\kay} z^m
&=&z^{-1}(P+p_{\ell}^{\caret\kay} z^\ell)\\
V_{21}(z)
&=&\sum_{0\leq m< \ell} q_{m}^{\caret\kay} z^m
&=&q_{0}^{\caret\kay}+Q\\
V_{22}(z)
&=&\sum_{0\leq m< \ell} q_{m+1}^{\caret\kay} z^m
&=&z^{-1}(Q+q_{\ell}^{\caret\kay} z^\ell)
\end{align}
where
\begin{equation}
P=\sum_{0<m< \ell} p_{m}^{\caret\kay} z^m \qquad
Q=\sum_{0<m< \ell} q_{m}^{\caret\kay} z^m
\end{equation}
Consequently,
\begin{align}
&\det V(z)
=z^{-1}\left(p_{0}^{\caret\kay}+P\right)\left(Q+q_{\ell}^{\caret\kay} z^\ell\right)
-z^{-1}\left(q_{0}^{\caret\kay}+Q\right)\left(P+p_{\ell}^{\caret\kay} z^\ell\right)\\
&=z^{-1}\left(\left(p_{0}^{\caret\kay} Q-q_{0}^{\caret\kay}P\right)
+z^\ell\left[\left(Pq_{\ell}^{\caret\kay} -Qp_{\ell}^{\caret\kay}\right)
+\left(p_{0}^{\caret\kay} q_{\ell}^{\caret\kay}-q_{0}^{\caret\kay} p_{\ell}^{\caret\kay}\right)\right]\right)
\end{align}
This can be expressed as the sum of three determinants of $2\times 2$ matrices.
Multiplying out the first two (using multilinearity of the determinant) 
and collecting terms of the same degree gives the result.
\end{proof}
\begin{remark}\label{rmk:Fix_in_terms_of_det/tr-d=2}
Consider the matrix $N_1=E(a_{\kay+1})\cdots E(a_{\kay+\ell})$ given above.
Since $\det(N_1)=(-1)^\ell$ we find that
\begin{equation}
\det(\mathrm{id}-N_1^n)
=
1-\trace(N_1^n)+\det(N_1^n)
=
1+(-1)^{n\ell}-\trace(N_1^n)
\end{equation}
Recall that $N_1$ is hyperbolic. 
Thus $N_1^n$ is also hyperbolic whenever $n$ is a positive integer. 
Therefore, by considering the discriminant of the characteristic polynomial of $N_1^n$ or otherwise, 
$|\trace(N_1^n)|>2$ for all positive integers $n$.
Moreover, from Remark~\ref{rmk:N1_in_terms_of_pq} above it follows that $\trace(N_1^n)$ is positive.
Hence
\begin{equation}
\left|\det (\mathrm{id}-N_1^n)\right|
=
\trace(N_1^n)-\left(1+(-1)^{n\ell}\right)
\end{equation}
\end{remark}
%
By combining the results above, we are now in a position to prove Theorem~\ref{thm:main_thm}.
\begin{proof}[Proof of Theorem~\ref{thm:main_thm}]
By Corollary~\ref{cor:W-rational} above
\begin{equation}
W(z)=U(z)^{-1}(X(z)-Y(z))V(z)^{-1}
\end{equation}
where $U(z)$, $V(z)$, $X(z)$ and $Y(z)$ are the rational matrix functions as defined in Corollary~\ref{cor:W-rational}.
We take the trace of both sides of this equality. 
One of the expressions is immediate.
Applying the definition of $W(z)$ given in Proposition~\ref{prop:mx_gen_fn-periodic},
followed by Remark~\ref{rmk:Fix_in_terms_of_det/tr-d=2}
and equation~\eqref{eq:Fixfn=|det|}, 
gives us
\begin{align}
&
\trace W(z)
=
\sum_{n\geq 0} \trace (N_{1}^{n})z^{n\ell}\\
&=
\trace (N_{1}^{0})+z^\ell \sum_{n\geq 1}\left|\det(\mathrm{id}-N_{1}^{n})\right|z^{n\ell-\ell}+\sum_{n\geq 1} (1+(-1)^{n\ell})z^{n\ell}\\
&=
\trace (N_{1}^{0})+z^\ell \sum_{n\geq 1}\card\mathrm{Fix}(f^n)z^{n\ell-\ell}+\sum_{n\geq 1} z^{n\ell} +\sum_{n\geq 1}(-z)^{n\ell}\\
&=
2+z^\ell(\log \zeta_{f})'(z^\ell)+\frac{z^\ell}{1-z^\ell}+\frac{(-z)^{\ell}}{1-(-z)^{\ell}}
\end{align}
Combining these equalities and rearranging, we arrive at equation~\eqref{eq:dlogzeta_vs_F-G}, as required. 
Thus the theorem is proved.
\end{proof}
%
%
%
%
%
%
%
%

\appendix
\section{The Gauss transformation and the theorem of L{\'e}vy.}\label{sect:Gauss_Map}
Let $\mathrm{T}$ denote the Gauss transformation on the interval $[0,1]$, {\it i.e.\/}, the transformation
\begin{equation}
\mathrm{T}(\theta)=
\left\{\begin{array}{ll}
\left\{\frac{1}{\theta}\right\} & \theta\in (0,1]\\
0 & \theta=0
\end{array}\right.
\end{equation}
where $\{x\}$ denotes the fractional part of the real number $x$.
The Gauss transformation possesses an ergodic absolutely continuous invariant probability measure $\mu$ given explicitly by
\begin{equation}
\mu=\frac{1}{\log 2}\frac{dx}{1+x}
\end{equation}
For $\theta\in[0,1]\setminus\mathbb{Q}$ and any positive integer $n$,
\begin{equation}
\theta=[a_1,a_2,\ldots,a_n+\mathrm{T}^n(\theta)]
\end{equation}
Therefore, 
\begin{equation}
\theta=\frac{p_{n-1}\mathrm{T}^n(\theta)+p_n}{q_{n-1}\mathrm{T}^n(\theta)+q_n}
\end{equation}
Inverting gives
\begin{equation}
\mathrm{T}^n(\theta)=-\frac{q_n\theta-p_n}{q_{n-1}\theta-p_{n-1}}=\frac{|\Delta_n|}{|\Delta_{n-1}|}
\end{equation}
where $\Delta_n$ denotes the $n$th short renormalisation interval.
Thus
\begin{equation}\label{eq:short_interval_vs_Gamma}
\frac{|\Delta_n|}{|\Delta_0|}=\prod_{m=1}^n \mathrm{T}^m(\theta) 
\end{equation}
and so by Birkhoff's Ergodic Theorem, for $\mu$-almost every $\theta\in  [0,1]$
\begin{equation}
\frac{1}{n}\log |\Delta_n|
=\frac{1}{n}\left[\log |\Delta_0|+\sum_{m=1}^n \log \mathrm{T}^m(\theta)\right]
\longrightarrow \int_{[0,1]} \log x \ d\mu
=\frac{\pi^2}{12\log 2}
\end{equation}
as $n$ tends to infinity. 
Observe that, since $\mu$ is absolutely continuous with respect to Lebesgue, the null sets for $\mu$ are exactly the null sets for Lebesgue measure.
Hence the above limit holds also for Lebesgue-almost every $\theta\in[0,1]$.
Since 
\begin{equation}
\frac{1}{q_nq_{n+2}}\leq\frac{1}{q_n(q_n+q_{n+1})}<\left|\theta-\frac{p_n}{q_n}\right|\leq \frac{1}{q_nq_{n+1}}
\end{equation}
we find that
\begin{equation}\label{eq:q_n_vs_Delta_n-sandwich}
\frac{1}{n}\log|\Delta_n|
=\frac{1}{n}\log |q_n\theta-p_n|
\leq
-\frac{1}{n}\log q_{n+1}
\leq
\frac{1}{n}\log|\Delta_{n-1}|
\end{equation}
Exponentiating, we then find that
\begin{equation}
\lim_{n\to\infty} q_n^{1/n}=e^{-\pi^2/12\log 2} \qquad \mbox{for Lebesgue-almost all} \  \theta\in [0,1]
\end{equation}
Since $|q_n\theta-p_n|\leq 1$ we also find that 
\begin{equation}
\lim_{n\to\infty} p_n^{1/n}=e^{-\pi^2/12\log 2} \qquad \mbox{for Lebesgue-almost all} \ \theta\in[0,1]
\end{equation}
We call $e^{-\pi^2/12\log 2}$ the {\it L{\'e}vy constant}.
See~\cite[p.66]{KhinchinBook} and~\cite[p.320]{LevyBook}.

Now consider the special case 
of quadratic irrationals.
Take a quadratic irrational $\theta$ as above, 
so there exists 
a positive integer $\ell$ and 
a non-negative integer $\kay$ such that 
$\mathrm{T}^\kay(\theta)=\mathrm{T}^{\kay+\ell}(\theta)$ and let $\theta^{\caret \kay}=\mathrm{T}^\kay(\theta)$.
Then equation~\eqref{eq:short_interval_vs_Gamma} implies that, for any positive $j<\ell$ and non-negative $n$,
\begin{align}
\log |\Delta_{\kay+j+n\ell}|
&=\log |\Delta_0|+\sum_{r=0}^{\kay+j+n\ell}\log\mathrm{T}^r(\theta)\\
&=O(1)+\sum_{r=0}^n\sum_{s=0}^{\ell-1}\log\mathrm{T}^{s+r\ell}(\theta^{\caret \kay})
\end{align}
From which we find, applying inequality~\eqref{eq:q_n_vs_Delta_n-sandwich} that
\begin{equation}
\lim_{n\to\infty}\frac{1}{n}\log q_n=-\frac{1}{\ell}\sum_{s=0}^{\ell-1}\log\mathrm{T}^s(\theta^{\caret \kay})
\end{equation}
Thus, in particular, the L{\'e}vy constant $\beta(\theta)$ is defined.
This statement was first proved by Jager and Liardet~\cite{JagerLiardet1988}.

\end{document}